\definecolor{dark-green}{rgb}{0.15,0.6,0.15}
\newtheorem{thm}{Theorem}[section]
\newtheorem{theorem}[thm]{Theorem}
\newtheorem{corollary}[thm]{Corollary}
\newtheorem{proposition}[thm]{Proposition}
\newtheorem{lemma}[thm]{Lemma}
\newtheorem{conjecture}[thm]{Conjecture}
\newtheorem*{theorem*}{Theorem}
\newtheorem*{proposition*}{Proposition}
\newtheorem*{remark*}{Remark}
\newtheorem*{conjecture*}{Conjecture}
\newtheorem{thmx}{Theorem}
\theoremstyle{definition}
\newtheorem{definition}[thm]{Definition}
\newtheorem{example}[thm]{Example}
\theoremstyle{remark}
\newtheorem{remark}[thm]{Remark}
\newtheorem{construction}[thm]{Construction}
\DeclareMathOperator{\Sub}{Sub}
\DeclareMathOperator{\Div}{Div}
\DeclareMathOperator{\SR}{SR}
\DeclareMathOperator{\DR}{DR}
\newcommand{\NN}{\mathbb{N}}
\newcommand{\w}{\rotatebox[origin=c]{180}{$\mathfrak{m}$}}
\newcommand{\grid}[2]{
    \foreach \i in {0,...,#1}
    {
        \foreach \j in {0,...,#2}
        {
            \node [fill=dark-green!50,circle,draw,inner sep = 0pt, outer sep = 0pt, minimum size=1.5mm] at (\i,\j) {};
        }    
    }
}
\newcommand{\gridbox}[4]{
    \draw[draw=black] (#1 - 0.3, #2 - 0.3) rectangle (#3 + 0.3, #4 + 0.3);
}
\newcommand{\ul}{\underline}
\DeclarePairedDelimiter\ceil{\lceil}{\rceil}
\DeclarePairedDelimiter\floor{\lfloor}{\rfloor}
\definecolor{rainbow-red}{HTML}{e81416}
\definecolor{rainbow-orange}{HTML}{ffa500}
\definecolor{rainbow-yellow}{HTML}{faeb36}
\definecolor{rainbow-green}{HTML}{79c314}
\definecolor{rainbow-blue}{HTML}{487de7}
\definecolor{rainbow-indigo}{HTML}{4b369d}
\definecolor{rainbow-violet}{HTML}{70369d}
\title{On Minimal Bases in Homotopical Combinatorics}
 \author[Adamyk]{Katharine Adamyk}
 \address{Mathematics Department, Hamline University, US}
  \email{kadamyk01@hamline.edu}
 \author[Balchin]{Scott Balchin}
 \address{Mathematical Sciences Research Centre, Queen's University Belfast, UK}
  \email{s.balchin@qub.ac.uk }
 \author[Barrero]{Miguel Barrero}
 \address{Department of Mathematics, University of Aberdeen, UK}
  \email{miguel.barrero@abdn.ac.uk}
 \author[Scheirer]{Steven Scheirer}
 \address{Department of Mathematics and Computer Science, Susquehanna University, US}
  \email{scheirer@susqu.edu}
 \author[Wisdom]{Noah Wisdom}
 \address{Department of Mathematics, Northwestern University, US}
  \email{noahankney2026@u.northwestern.edu}
   \author[Zapata Castro]{Valentina Zapata Castro}
 \address{Department of Mathematics, University of Virginia, US}
  \email{vz6an@virginia.edu}
\date{\today}
\begin{document}

\begin{abstract}
    We present a development in the computational suite for the study of $N_\infty$ operads for a finite group $G$. This progress is achieved using the simple yet powerful observation that Rubin's generation algorithm can be interpreted as a closure operator. Leveraging this perspective, we establish the existence of minimal bases for $N_\infty$ operads.  By investigating these bases for certain families of groups we are led to introduce and analyze several novel combinatorial invariants for finite groups.
\end{abstract}

\maketitle\markboth{\shorttitle}{\shorttitle}

\tableofcontents

\newpage

\section{Introduction}

\subsection*{Context}

Homotopical combinatorics is an area of mathematics which inhabits the rich intersection of equivariant homotopy theory and combinatorics. One of the central problems in the field is, for a fixed finite group $G$, to understand the poset $N_\infty(G)$ of (homotopy classes of) \emph{$N_\infty$ operads} for $G$. These are gadgets which control the behavior of multiplicative commutativity in the presence of a $G$ action \cite{blumberghill}. Once the underlying set of $N_\infty(G)$ is understood, one then investigates further structure which is inherent to it.

While the definition of $N_\infty$ operads is somewhat cumbersome, work of multiple authors has proved that they are realized by a simple, discrete, structure called a \emph{transfer system} \cite{bbr, BP_operads, GW_operads, Rubin_comb} which is a collection of \emph{norm maps} satisfying certain relations. That is, the problem of investigating $N_\infty(G)$ can be equivalently recast as investigating the set of transfer systems, which we denote $\mathsf{Tr}(G)$.

In practice, $\mathsf{Tr}(G)$ is only completely understood as a poset for the family of groups of the form $C_{p^n}$ for $p$ prime, where Balchin--Barnes--Roitzheim proved that this poset is exactly the ubiquitous associahedron \cite{bbr}. We moreover have enumeration results for abelian $p$-groups of rank 2, and groups of the form $C_{p^nq}$ and $D_{p^n}$ \cite{bao2023transfersystemsrankelementary, BMO_enumeration}.

These results, as well as many of the other partial results existing in the literature, have usually been informed by computational evidence. However, the computational evidence that we have access to is somewhat sparse, as naive algorithms have infeasible scalability. As such, any development which would expand the database of computational evidence that we have is invaluable.

\subsection*{Main results}

The first major development of this paper is a discussion of an efficient algorithm for the computation of $\mathsf{Tr}(G)$. This algorithm is particularly powerful in the non-abelian setting, something which up-until-now has not seen much investigation outside of a specific class of non-abelian groups \cite{BMO_lift}.

The key input is Rubin's algorithm which generates the smallest transfer system from a given collection of norm maps \cite{rubin}. Writing $\mathbb{I}(\Sub(G))$ for the collection of non-trivial intervals of subgroups $1 \leqslant H < K \leqslant G$, we can repackage Rubin's algorithm as a \textit{closure operator} $\langle - \rangle \colon \mathcal{P}(\mathbb{I}(\Sub(G))) \to \mathcal{P}(\mathbb{I}(\Sub(G)))$ of which the closed sets are  the transfer systems. There exist many algorithms, amenable to parallelization, for computing the closed sets of a closure operator in the literature. The implementation of such an algorithm using data regarding the group from a computer algebra package such as \texttt{Sage} provides us with a powerful new tool, and has been released as a header-only \texttt{C++} library by the second author in \cite{balchin2025ninftysoftwarepackagehomotopical}.

Using this algorithm optimized for HPC facilities we are able to generate far more data than we had access to previously. For example, prior to this paper, we only knew $|\mathsf{Tr}(S_i)|$ for $i=1,2,3$, where $S_i$ is the symmetric group of degree $i$, while with this algorithm we are able to compute:
    \begin{align*}
        |\mathsf{Tr}(S_4)| & = 8691, \\
        |\mathsf{Tr}(S_5)| & = 183598202.
    \end{align*}

While these new results are interesting in their own right, this paper is concerned with investigating the combinatorics that arise from this algorithm. By running Rubin's algorithm in reverse, one is able to find a set of generators for a transfer system. Given that Rubin's algorithm is a closure operator, we can think of such a set as a \emph{basis} for the transfer system. This basis is minimal in the sense that there are no redundant elements. Our first result tells us that all minimal bases for a transfer system have the same size.

\begin{thmx}[\cref{cor:well-defined-map}]
    Let $G$ be a finite group and $\mathsf{T} \in \mathsf{Tr}(G)$. Then any minimal basis for $\mathsf{T}$ has the same cardinality. In particular, there is a well-defined assignment 
    \begin{align*}
    \mathfrak{m} \colon \mathsf{Tr}(G) \to \mathbb{N}
    \end{align*}
    which takes a transfer system $\mathsf{T}$ to the size of a minimal basis of $\mathsf{T}$.
\end{thmx}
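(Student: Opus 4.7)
The plan is to recognize $(\mathbb{I}(\Sub(G)), \langle - \rangle)$ as a \emph{convex geometry}, since in such a structure every closed set admits a unique minimal generating set — namely its set of extreme points — from which the uniform cardinality of minimal bases follows at once.

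First I would confirm that Rubin's algorithm genuinely defines a closure operator on $\mathcal{P}(\mathbb{I}(\Sub(G)))$: extensiveness ($A \subseteq \langle A \rangle$), monotonicity ($A \subseteq B \Rightarrow \langle A \rangle \subseteq \langle B \rangle$) and idempotence ($\langle \langle A \rangle \rangle = \langle A \rangle$) all follow readily from the description of the algorithm together with the fact, asserted in the introduction, that its closed sets are precisely the transfer systems.

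The crux is verifying the \emph{anti-exchange property}: for every transfer system $C$ and distinct elements $x, y \notin C$, if $x \in \langle C \cup \{y\} \rangle$ then $y \notin \langle C \cup \{x\} \rangle$. My approach would be to unpack Rubin's generation rules one step at a time and exhibit a strict partial order on $\mathbb{I}(\Sub(G))$ — most naturally one encoding containment of targets and sources of intervals — under which each generation rule outputs an interval strictly below its \textquotedblleft new\textquotedblright\ input. Such a directionality would immediately preclude the two simultaneous derivations required for an exchange. The main obstacle is precisely this step: finding a single invariant that decreases under all of Rubin's rules (composition, conjugation, and restriction) and under iteration of their combinations.

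With anti-exchange in hand, I would define the extreme points of a transfer system $\mathsf{T}$ by
\begin{equation*}
\mathrm{ex}(\mathsf{T}) \defeq \{x \in \mathsf{T} : x \notin \langle \mathsf{T} \setminus \{x\}\rangle\}
\end{equation*}
and invoke the standard result in the theory of convex geometries that $\mathrm{ex}(\mathsf{T})$ is the unique minimal basis of $\mathsf{T}$: the inclusion $\mathrm{ex}(\mathsf{T}) \subseteq B$ for any basis $B$ of $\mathsf{T}$ follows from the closure-operator axioms alone (if $x \in \mathrm{ex}(\mathsf{T})$ were omitted from $B$, then $B \subseteq \mathsf{T} \setminus \{x\}$ and monotonicity would give $x \in \langle B \rangle \subseteq \langle \mathsf{T} \setminus \{x\}\rangle$, contradicting extremeness), while the reverse inclusion for minimal $B$ requires anti-exchange. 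Consequently every minimal basis of $\mathsf{T}$ coincides with $\mathrm{ex}(\mathsf{T})$, and the well-defined map is given by $\mathfrak{m}(\mathsf{T}) = |\mathrm{ex}(\mathsf{T})|$.
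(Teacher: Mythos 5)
Your argument hinges on the anti-exchange property, i.e.\ on $\langle - \rangle$ being the closure operator of a convex geometry, from which \emph{uniqueness} of the minimal generating set would follow. That is strictly stronger than the theorem asserts, and it is false: \cref{ex:diff-min} in the paper already exhibits two distinct minimal generating sets $S_a = \{(1, C_p), (C_p, C_{p^2})\}$ and $S_b = \{(1, C_{p^2}), (C_p, C_{p^2})\}$ for the complete transfer system on $C_{p^2}$. The exchange cycle is explicit: with $C = \langle (C_p, C_{p^2}) \rangle$, $x = (1, C_p)$, $y = (1, C_{p^2})$, restriction of $y$ along $C_p \leqslant C_{p^2}$ gives $x \in \langle C \cup \{y\}\rangle$, while composing $x$ with $(C_p, C_{p^2})$ gives $y \in \langle C \cup \{x\}\rangle$. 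So the monotone invariant you hope to find cannot exist --- composition and restriction move intervals in opposite directions along any such order --- and the extreme-point set $\mathrm{ex}(\mathsf{T})$ you propose need not generate: for $\mathsf{T}_C$ above it is the singleton $\{(C_p, C_{p^2})\}$, which does not yield $(1, C_{p^2})$.

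Since uniqueness fails, equicardinality must be proved directly. The paper does this by filtering a generating set $S$ by the total-exponent invariant of the source, $S_N = \{(K,H) \in S \mid P(K) \geqslant N\}$, and using the shape of Rubin's algorithm to show that any $(K,H)$ with $P(K) = N$ is generated using at most one element of level exactly $N$ and the rest from level $\geqslant N+1$ (\cref{lemma-generatefiltration}). This sets up, stratum by stratum, a bijection between the graded pieces $S_n \setminus S_{n+1}$ and $R_n \setminus R_{n+1}$ of any two minimal generating sets $S$, $R$ (\cref{prop:all-same-size}); the bijection, rather than a canonical choice, is what makes $\mathfrak{m}$ well defined.
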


\newpage

It is the behavior of the map $\mathfrak{m}$ for various families of groups which occupies \cref{part:combi} of this paper. We can immediately extract two numerical invariants from $\mathfrak{m}$ which are of significant interest:
\begin{itemize}[leftmargin=*]
    \item the \emph{width} $\w$ of $G$: the cardinality of a minimal basis for the complete transfer system (i.e., the transfer system with all possible norm maps);
    \item the \emph{complexity} $\mathfrak{c}$ of $G$: the maximum value of $\mathfrak{m}(\mathsf{T})$ as $\mathsf{T}$ ranges over transfer systems for $G$. 
\end{itemize}

The width of a group is somewhat familiar invariant of $G$. Recall that a subgroup of $G$ is said to be \emph{meet-irreducible} if it is not the intersection of two proper subgroups.

\begin{thmx}[\cref{prop:width}]
    Let $G$ be a finite group. Then $ \w (G)$ is equal the number of conjugacy classes of meet-irreducible subgroups of $G$.
\end{thmx}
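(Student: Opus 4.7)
The plan is to prove $\w(G) = n$, where $n$ denotes the number of conjugacy classes of meet-irreducible subgroups of $G$. For each meet-irreducible $H < G$, let $K_H$ denote its unique minimal strict overgroup (existence and uniqueness of $K_H$ are equivalent to $H$ having a unique cover in $\Sub(G)$, which is the lattice-theoretic form of meet-irreducibility). Fix representatives $H_1, \ldots, H_n$ of the conjugacy classes of meet-irreducible subgroups and set
\[
\mathcal{B} \defeq \{(H_i, K_{H_i}) : 1 \leq i \leq n\}.
\]
I will prove $\w(G) \leq n$ by showing $\mathcal{B}$ generates the complete transfer system, and $\w(G) \geq n$ by showing every generating set has at least $n$ elements.

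For the upper bound, I would show $\langle \mathcal{B} \rangle$ equals the complete transfer system by downward induction on $|A|$. Assuming $(A', C') \in \langle \mathcal{B} \rangle$ whenever $A < A' \leq C' \leq G$, I produce $(A, C) \in \langle \mathcal{B} \rangle$ for every $C \geq A$. If $A$ is meet-irreducible, then any $C > A$ satisfies $C \geq K_A$, so $(A, K_A) \in \langle \mathcal{B} \rangle$ (as a conjugate of some $(H_i, K_{H_i})$) and $(K_A, C) \in \langle \mathcal{B} \rangle$ (by induction) compose via transitivity. If $A$ is meet-reducible, choose a cover $M_1$ of $A$ with $M_1 \leq C$ and any other cover $M_2 \neq M_1$; then $M_1 \cap M_2 = A$ (two distinct covers of $A$ necessarily intersect in $A$). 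The inductive hypothesis gives $(M_2, G) \in \langle \mathcal{B} \rangle$, so restriction along $M_1 \leq G$ yields $(M_1 \cap M_2, M_1) = (A, M_1) \in \langle \mathcal{B} \rangle$, which composes with $(M_1, C) \in \langle \mathcal{B} \rangle$ to produce $(A, C)$.

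For the lower bound, I would show any generating set $S$ of the complete transfer system contains, for each meet-irreducible conjugacy class $[H]$, at least one pair whose source lies in $[H]$. Fix a meet-irreducible $H$ and analyze how $(H, K_H)$ can arise from $S$ under the closure operations. Transitivity cannot nontrivially split $(H, K_H)$ since $K_H$ covers $H$. A restriction producing $(H, K_H) = (X \cap Z, Z)$ forces $Z = K_H$ and $X \cap K_H = H$; since $X \supseteq H$, meet-irreducibility rules out $X > H$ (else $X \supseteq K_H$ gives $X \cap K_H = K_H \neq H$), so $X = H$ and the derivation reduces to generating $(H, Y)$ for some $Y \geq K_H$. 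Conjugation merely replaces $H$ by a conjugate. Inducting on derivation length, any derivation of $(H, K_H)$ bottoms out at an $S$-element with source conjugate to $H$. Since distinct meet-irreducible conjugacy classes require distinct such $S$-elements, $|S| \geq n$.

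The main obstacle will be formalizing the iteration step in the lower bound. A clean version inducts on the length of a derivation in Rubin's algorithm, verifying the invariant: any derivation from $S$ of a pair $(H', Y)$ with $H'$ meet-irreducible and $Y \geq K_{H'}$ contains, at its base, an element of $S$ whose source is conjugate to $H'$. The upper bound is otherwise routine, though the choice of induction variable $|A|$ (rather than, say, the length of $[A, C]$) is important so that the meet-reducible step has access to $(M_2, G)$ from the inductive hypothesis.
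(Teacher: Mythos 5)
Your proof is correct, and it takes a genuinely different route from the paper's. The paper's basis is $\{(I,G)\}$, one long arrow to $G$ per conjugacy class of meet-irreducibles; generation holds because every subgroup is a meet of meet-irreducibles (so each $(H,G)$ arises by pullback closure) and any $(H,K)$ restricts from $(H,G)$, while minimality is asserted in a single sentence. You instead take the cover arrows $(H_i, K_{H_i})$ --- the ``shortest arrow'' representation in the spirit of \cref{ex:diff-min} --- and prove generation by a downward induction on $\lvert A\rvert$, where the meet-reducible case restricts $(M_2,G)$ along a second cover $M_1$ to recover $(A,M_1)$, thereby avoiding the lattice representation theorem and explicit pullback closure. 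Your lower bound is the sharper departure: walking the cover arrow $(H,K_H)$ backwards through the three stages of \cref{cons:rubin} --- transitivity is blocked because $K_H$ covers $H$; restriction forces the source $X$ with $X \cap K_H = H$ to equal $H$ by meet-irreducibility (any strict overgroup of $H$ would contain $K_H$); conjugation only moves within the class --- shows that \emph{every} generating set for the complete transfer system, minimal or not, must contain an arrow with source conjugate to $H$, hence has at least $n$ elements. This is strictly more information than the paper's brief minimality observation. The ``induction on derivation length'' you flag as the main obstacle is in fact a non-issue: \cref{cons:rubin} is a fixed single pass of conjugate, then restrict, then compose --- not an iterated fixed-point closure --- so your backward analysis terminates after exactly those three steps, and no further formalization is needed.
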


The complexity of $G$, however, is a more exotic invariant. In terms of the algorithm, it counts the number of iterations that the algorithm needs to complete. It is difficult to compute, but it does provide us with a non-trivial lower bound on the number of transfer systems, in particular, $2^{\mathfrak{c}(G)} \leqslant |\mathsf{Tr}(G)|$ \cite{luo2024latticeweakfactorizationsystems}.

We begin by tackling the issue of complexity for the family of square-free cyclic groups. The following provides a lower bound for the complexity, and we further conjecture--with good reason--that this is indeed the complexity (see \cref{rem:symm}).

\begin{thmx}[\cref{thm:maincube}]\label{thmx:cube}
    Let $G = C_{p_1 \cdots p_n}$ for $p_i$ distinct primes. Then
    \[
    \mathfrak{c}(G) \geqslant \begin{cases}
        \sum\limits_{i=0}^{\floor{\frac{n-1}{2}}} \binom{n}{n-i} \times \binom{n-i}{i+1} & n = 2, 4, 6 \\
        &\\
        \sum\limits_{i=0}^{\floor{\frac{n-1}{2}}} \binom{n}{n-i} \times \binom{n-i}{i} & n \neq 2, 4, 6. \\
        \end{cases}
    \]
\end{thmx}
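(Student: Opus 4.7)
The plan is to construct, for each $n$, an explicit transfer system on $G = C_{p_1 \cdots p_n}$ whose minimal basis realises the stated lower bound; by the well-definedness of $\mathfrak{m}$, this gives the claimed estimate on $\mathfrak{c}(G)$. I would identify $\Sub(G)$ with the Boolean lattice $2^{[n]}$ via $A \mapsto \prod_{i \in A} C_{p_i}$, so that an interval in $\mathbb{I}(\Sub(G))$ corresponds to a pair $B \subsetneq A$ of subsets of $[n]$. Since $G$ is abelian the conjugation axiom is vacuous, and Rubin's closure is generated by composition $(B, A) \cdot (A, C) \rightsquigarrow (B, C)$ (for non-trivial inputs) together with restriction $(B, A) \rightsquigarrow (B \cap L, L)$ for $L \subseteq A$ with $L \not\subseteq B$. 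Set $\sigma(B, A) := |A| + |B|$.

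For $M \in \mathbb{N}$, define
\[
    S_M := \bigl\{ (B, A) \in 2^{[n]} \times 2^{[n]} : B \subsetneq A,\ \sigma(B, A) = M \bigr\}.
\]
Parameterising by $|B|$ shows that $|S_n|$ is the first binomial sum in the theorem and $|S_{n+1}|$ is the second. My central claim is that $S_M$ is a minimal basis for $\langle S_M \rangle$ for each such $M$. Granting this, both $S_n$ and $S_{n+1}$ yield lower bounds on $\mathfrak{c}(G)$, and a direct comparison of the two binomial sums shows that $|S_{n+1}| > |S_n|$ precisely when $n \in \{2, 4, 6\}$, which produces the stated case split.

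Minimality of $S_M$ follows from the invariant: every $(B, C) \in \langle S_M \rangle$ satisfies $\sigma(B, C) \leq M$, with equality if and only if $(B, C) \in S_M$. This is proved by induction on the derivation tree. The base case is immediate. For a non-trivial restriction $(B, A) \rightsquigarrow (B \cap L, L)$ with $L \subsetneq A$,
\[
    \sigma(B \cap L, L) = |B \cap L| + |L| \leq |B| + |L| < |B| + |A| = \sigma(B, A) \leq M.
\]
For a non-trivial composition $(B, A) \cdot (A, C) \rightsquigarrow (B, C)$ with $B \subsetneq A \subsetneq C$, the strict chain $|B| + |A| < |B| + |C| < |A| + |C|$ gives $\sigma(B, C) < \sigma(A, C) \leq M$. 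In either case the resulting sum is strictly below $M$, so any $(B, C) \in \langle S_M \rangle$ with $\sigma(B, C) = M$ must be a base case, i.e.\ an element of $S_M$. Applying this verbatim with $S_M$ replaced by $T := S_M \setminus \{s\}$ shows $s \notin \langle T \rangle$, establishing minimality.

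The main obstacle is noticing the correct invariant. The motivating observation is that every element of $S_M$ satisfies $|B| < M/2 < |A|$, which is what combinatorially forbids composing two distinct generators (matching the $A$-term of one to the $B$-term of another would require a set of size both greater than and less than $M/2$), and hence forces the strict decrease of $\sigma$ in the inductive step. Once the invariant is in hand, the remaining work — explicit counting of $|S_n|$ and $|S_{n+1}|$ and the comparison that singles out $n \in \{2, 4, 6\}$ — is routine and can be verified by direct computation for small $n$ together with a monotonicity argument for large $n$.
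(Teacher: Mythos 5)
Your proof is correct and is substantially leaner than the paper's route. Your set $S_M$ is precisely a partial rainbow in the paper's sense (\cref{definition-partialrainbow}), with arcs $i \to M-i$, and your invariant argument (``$\sigma(B,C) \le M$ for all $(B,C) \in \langle S_M\rangle$, with equality only at base cases'') is a self-contained re-derivation of \cref{proposition-rainbowsareminimal} specialised to the abelian, constant-$\sigma$ case. The paper also takes these rainbows (the complete rainbow for odd $n$, $R[n/2] = S_n$ or $R[0] = S_{n+1}$ for even $n$) as the realisers, but it reaches them by a much longer route: it develops a calculus of rainbow operations (translation, contraction, expansion, block versions, an eight-branch flowchart algorithm in \cref{fig:alg}) to prove that these particular rainbows are \emph{maximal} among all rainbows, not merely that they are minimal generating sets. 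Your approach establishes the lower bound directly by exhibiting the right $S_M$ for each $n$ and verifying minimality; this sidesteps the optimisation entirely, and for the stated inequality that is all one needs. The trade-off is that you get nothing about where $|S_n|$ and $|S_{n+1}|$ sit relative to \emph{other} rainbows, which the paper also wants for the refined statement of \cref{thm:maincube}.

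Two small corrections. First, you have the labels swapped: parameterising by $|B|$, $|S_n| = \sum_i \binom{n}{n-i}\binom{n-i}{i}$ is the \emph{second} displayed sum, and for even $n$, $|S_{n+1}| = \sum_j \binom{n}{n-j}\binom{n-j}{j+1}$ is the \emph{first}. (For odd $n$ the two do not coincide, so be careful to only invoke $S_{n+1}$ at $n=2,4,6$.) Second, and more substantively, your remark that ``the comparison that singles out $n\in\{2,4,6\}$ \dots is routine'' seriously understates the difficulty: deciding when $|S_n|\ge|S_{n+1}|$ is the paper's \cref{lem:ineq}, whose proof identifies the difference with $\gamma_n - \binom{n}{n/2}$ for the Riordan numbers $\gamma_n$ and runs an induction through their recurrence. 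The paper explicitly flags this as the main technical content. Fortunately you do not need the comparison at all for the inequality: the stated bound is, in each case, exactly $|S_n|$ or $|S_{n+1}|$, so it suffices to pick the matching $M$ and invoke your minimality lemma — there is no need to determine which of the two is larger.
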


The change at behavior at $n=8$ is the main technical substance of this result, and reflects specificity regarding the growth of binomial coefficients.

We then turn our attention to groups of the form $G=C_{p^nq^m}$ where $p$ and $q$ are distinct primes. While we only obtain partial results for $m > 1$, for $m=1$ we obtain the following theorem, which is an exact calculation of the complexity:

\begin{thmx}[\cref{cor:Cpnq-complexity}]
    Let $G=C_{p^nq}$ for $p,q$ distinct primes. Then
    \[
    \mathfrak{c}(G) =
    \begin{cases}
        3k+1 & n=2k, \\
        &\\
        3k+2 & n=2k+1.
    \end{cases}
    \]
\end{thmx}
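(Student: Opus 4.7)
The subgroup lattice of $G = C_{p^nq}$ is the ladder $\{H_{i,j} : 0 \leqslant i \leqslant n,\ j \in \{0,1\}\}$ with $H_{i,j} \leqslant H_{i',j'}$ iff $i \leqslant i'$ and $j \leqslant j'$. A norm map is a comparable pair in this poset, and since $G$ is abelian Rubin's closure $\langle - \rangle$ is generated by transitive composition together with the restriction rule which, given $H \to K$ in the closure and $L \leqslant K$, forces $(H \cap L) \to L$ to lie in the closure. By the first theorem of the paper, $\mathfrak{c}(G)$ is well-defined as the largest minimal-basis cardinality over $\mathsf{Tr}(G)$, so my plan is to prove matching upper and lower bounds separately.

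For the lower bound, I would exhibit an explicit transfer system $\mathsf{T}_n$ whose minimal basis has size $3k+1$ (if $n=2k$) or $3k+2$ (if $n=2k+1$). A natural candidate is a \emph{staircase} transfer system that places, in each height-two block $[H_{2i,*}, H_{2i+2,*}]$ of the ladder, three mutually non-redundant generators (for instance a new $p$-edge in each column together with a $q$-edge or diagonal linking them), plus one or two residual generators near the bottom of the ladder to account for the parity of $n$. Non-redundancy would then be verified edge by edge by checking that removing each candidate from $\langle \mathsf{T}_n \rangle$ loses some edge of $\mathsf{T}_n$.

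For the upper bound, I would argue by induction on $n$ that no minimal basis of any transfer system on $C_{p^nq}$ exceeds the claimed size. The key step is a \emph{two-level lemma}: peeling off the top two $p$-levels of the ladder contributes at most three new basis elements, since the ladder geometry forces any fourth proposed generator in that region to lie in the closure of those already counted --- either via restriction down a column or via composition across the two levels. The base cases $n=0,1$ would be checked directly; for $n=1$ one enumerates the transfer systems on $C_{pq}$ and verifies that the maximum minimal-basis size equals $2$, matching $3\cdot 0 + 2$.

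The main obstacle I anticipate is precisely the two-level lemma underpinning the upper bound. The restriction and composition rules interact richly and many candidate generators are possible even in a small region; the task is to show that any four independent basis elements concentrated in a two-level block are incompatible with minimality. I expect this to require a case analysis by edge type ($p$-edges in each of the two columns, $q$-edges at each level, and diagonals combining the two) and by how the proposed basis elements restrict to one another. This case analysis is where I anticipate the main technical effort, whereas the lower bound is comparatively concrete as it only requires exhibiting one transfer system and certifying its basis size.
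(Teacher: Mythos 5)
Your high-level plan (matching lower and upper bounds via explicit construction and induction) is reasonable, but both halves diverge from the paper's proof, and the upper bound as you propose it has a structural gap that the paper's actual argument is specifically designed to avoid.

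For the lower bound, the paper does not construct an ad-hoc ``staircase'' and verify non-redundancy edge by edge. Instead it builds a \emph{partial rainbow} in the sense of \cref{definition-partialrainbow}: on the rank projection $P(\Sub(C_{p^n q})) = [n+1]$ it takes the nested arcs $\{(0,n+1),\dots,(k,k+1)\}$ (for $n=2k$) or $\{(1,n+1),\dots,(k+1,k+2)\}$ (for $n=2k+1$), and pulls them back to $\Sub(C_{p^nq})$. The arc $(i, n+1-i)$ with $i\geqslant 1$ has exactly three preimage arrows in the ladder, and the outermost arc has one or two depending on parity, giving $3k+1$ or $3k+2$ in total. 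Minimality is then immediate from \cref{proposition-rainbowsareminimal}, which says any partial rainbow is a minimal generating set --- no edge-by-edge certification is needed. Your ``staircase'' may well describe the same set of arrows, but without the rainbow machinery you would have to prove minimality by hand, which is real work you have not done.

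For the upper bound, your proposed ``two-level lemma'' is where I see a genuine problem. The claim that ``any fourth proposed generator concentrated in a two-level block'' is redundant is not well-posed for the situation at hand: generators in a minimal basis can be arbitrarily long arrows spanning many $p$-levels (indeed, both the maximal partial rainbow and \cref{ex:inclusion_not_respected} contain such arrows), so a basis element is not ``concentrated'' in any two-level region, and it is unclear how you would attribute each generator to a block of the ladder. You flag the required case analysis as the main technical effort, and I think it is actually more than a case analysis --- the inductive invariant itself needs to change. The paper's proof (\cref{lemma:typepairs} and \cref{corollary-n1bound}) avoids this entirely by a different decomposition: it classifies arrows of a minimal basis $S$ into \emph{top} (within the top row), \emph{bottom} (within the bottom row), and \emph{diagonal} (crossing rows, including verticals), with counts $T,B,D$, and proves the three pairwise bounds $T+B\leqslant n$, $T+D\leqslant n+1$, $B+D\leqslant n+1$. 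Each of these is proved by an induction that peels off only one $p$-level at a time, using \cref{lemma:farleft} (at most one arrow of each type can start at the far-left column) together with a careful argument when equality threatens, tracking the forced arrow $(0,0;n,0)$ or $(0,0;n,1)$. Summing the three inequalities yields $2|S|\leqslant 3n+2$ and hence $|S|\leqslant\lfloor 3n/2\rfloor+1$, which is exactly the claimed bound. This type-splitting and summing trick is the key idea you are missing, and it is what makes the induction go through cleanly where a naive ``so many generators per level'' count would not.
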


That is, the sequence $\mathfrak{c}(C_{p^{n}q})$ as $n$ increases is the sequence of \emph{non-multiples} of 3. In particular, we can conclude that the complexity in this case grows linearly in the number of terms in a prime factorization of the group order as opposed to the square-free cyclic case.

While we have focused on the complexity and width of groups with respect to the generation algorithm, this is only the first direction of investigation. In \cref{sec:future} we present multiple further lines of inquiry which we expect will lead to further fruitful connections being found between equivariant homotopy theory and combinatorics.

\begin{remark}\label{rem:luo}
    Luo--Rognerud have independently introduced the notion of the complexity of a group in their own work, albeit under a different guise. In \cite{luo2024latticeweakfactorizationsystems} the authors discuss various properties of the lattice of weak factorization systems on a finite lattice. Given the bijection between weak factorization systems on the lattice $\Sub(G)$ and transfer systems for $G$, when $G$ is abelian \cite{fooqw}, this is relevant to the current paper. In the process of their results, they discuss the concept of lower covers and elevating sets for weak factorization systems. What they denote by $\mathrm{mcov}_{\downarrow}(\mathsf{Tr}(G))$ is what we would denote $\mathfrak{c}(G)$. In terms of the lattice theory of $\mathsf{Tr}(G)$, the complexity exactly computes the \emph{breadth} of $\mathsf{Tr}(G)$. Here we opt to keep the dependence on the group $G$ itself, hence the usage of the term \emph{complexity}.
    
    While Luo--Rognerud do not spend much time in \cite{luo2024latticeweakfactorizationsystems} investigating $\mathrm{mcov}_{\downarrow}(\mathsf{Tr}(G))$, they do provide a lower bound for square-free cyclic groups as we do in \cref{thmx:cube} (\cite[Question 10.10]{luo2024latticeweakfactorizationsystems}). Their lower bound coincides with ours until $n=8$, where they did not expect the growth change for even $n$ as is presented in our theorem. As already discussed, it is this growth change which is the main technical component of our result.
\end{remark}

\subsection*{Notation}

Fix a finite group $G$.

\begin{itemize}
    \item $\Sub(G)$ is the collection of subgroups of $G$ ordered by inclusion.
    \item $\mathsf{Tr}(G)$ is the collection of transfer systems of $G$ ordered by inclusion.
    \item $\mathbb{I}(\Sub(G))$ is the collection of intervals in the lattice $\Sub(G)$. An element of $\mathbb{I}(\Sub(G))$ will be denoted via a pair $(H,K)$.
    \item For a subset $S \subseteq \mathbb{I}(\Sub(G))$ we write $\langle S \rangle$ for the transfer system generated by $S$ (see \cref{cons:rubin}).
    \item $\Div(\NN)$ denotes the divisibility lattice of the natural numbers.
    \item We denote by $\binom{n}{x,y-x,n-y}$ the trinomial coefficient which is equivalent to the product of binomial terms $\binom{n}{x} \times \binom{n-x}{y-x}$.
\end{itemize}

\subsection*{Acknowledgments}

This research was initiated at the AMS funded MRC on \textit{Homotopical Combinatorics}, which was partially supported by NSF grant 1916439. The authors thank the AMS and the organizers of this event for providing them with this opportunity, as well as our fellow participants for many helpful informal discussions. We are moreover thankful to Dave Benson for various numerical computations and Baptiste Rognerud for bringing \cite{luo2024latticeweakfactorizationsystems} to our attention. Finally, we thank Yuri Sulyma for conversations at an early stage of this project.
 
We are grateful for use of the computing resources from the Northern Ireland High Performance Computing (NI-HPC) service funded by EPSRC (EP/T022175). MB is supported by the EPSRC grant EP/X038424/1 “Classifying spaces, proper actions and stable homotopy theory”.

\newpage

\part{The theory of minimal generation}\label{part:theory}

In this part we present the general theory of generation of transfer systems and the concepts of minimal bases. \cref{part:combi} will then be dedicated to the combinatorics associated to this theory for a wealth of examples.

\section{Generation of transfer systems}

\subsection{Rubin's algorithm as a closure operator}

We begin by recalling the notion of a $G$-transfer system. Recall that $\mathbb{I}(\Sub(G))$ is the collection of intervals in the lattice $\Sub(G)$ with elements pairs $(H,K)$ where $1 \leqslant H \leqslant K \leqslant G$. We will sometimes refer to the pairs $(H,K)$ as \emph{arrows}.

\begin{definition}
    Let $G$ be a finite group. A \emph{($G$-)transfer system} is a subset $\mathsf{T} \subseteq \mathbb{I}(\Sub(G))$ such that:
    \begin{itemize}
        \item (Identity) $(H,H) \in \mathsf{T}$ for all $H \leqslant G$;
        \item (Composition) If $(L,K) \in \mathsf{T}$ and $(K,H) \in T$ then $(L,H) \in \mathsf{T}$;
        \item (Conjugation) If $(K,H) \in \mathsf{T}$ then $(K^g , H^g) \in \mathsf{T}$ for all $g \in G$;
        \item (Restriction) If $(K,H) \in \mathsf{T}$ and $L \leqslant H$ then $(K \cap L, L) \in \mathsf{T}$.
    \end{itemize}
    We denote the collection of all transfer systems of $G$ by $\mathsf{Tr}(G)$.
\end{definition}

\begin{remark}
    The axioms of a transfer system moreover imply that they are pullback closed. That is, if we we have $(H_1 , K)$ and $(H_2, K)$ in $\mathsf{T}$, then we also have $(H_1 \cap H_2, K) \in \mathsf{T}$ \cite[Proposition 4.2]{fooqw}.
\end{remark}

The following algorithm of Rubin is the key machinery of this paper. It provides a way of generating the smallest transfer system from a given collection of arrows.

\begin{construction}[Rubin's algorithm {\cite[Construction A.1]{rubin}}]\label{cons:rubin}
    Let $G$ be a finite group, and $S \subseteq \mathbb{I}(\Sub(G))$. Define
    \begin{align*}
        S_0 \coloneqq & S, \\[5pt]
        S_1 \coloneqq & \bigcup_{(K,H) \in S_0} \{ (K^g, H^g) \mid g \in G \},\\[5pt]
        S_2 \coloneqq & \bigcup_{(K,H) \in S_1} \{ (L \cap K,L) \mid L \subseteq H \},\\[5pt]
        S_3 \coloneqq & \{(K,H) \mid \exists n \geqslant 0 \text{ and subgroups } H_0, \dots H_n \text{ with } (K,H_0), (H_0, H_1), \dots , (H_n,H) \in S_2\}.
    \end{align*}

    Then $\langle S \rangle \coloneqq S_3$ is the minimal $G$-transfer system which contains $S$. In particular we say that $S$ is a \emph{generating set} for $\langle S \rangle$.
\end{construction}

The reason that Rubin's algorithm is so useful is that it is an example of a \emph{closure} operator:

\begin{definition}
    Let $X$ be a set. A \emph{closure operator} on $X$ is a function $\operatorname{cl} \colon \mathcal{P}(X) \to \mathcal{P}(X)$ such that for all $A,B \subseteq X$:
    \begin{enumerate}
        \item $A \subseteq \operatorname{cl}(A)$,
        \item $A \subseteq B \Rightarrow \operatorname{cl}(A) \subseteq \operatorname{cl}(B)$,
        \item $\operatorname{cl}(\operatorname{cl}(A)) = \operatorname{cl}(A)$.
    \end{enumerate}
    A subset $A \subseteq X$ is \emph{closed} if $A= \operatorname{cl}(A)$.
\end{definition}

\begin{lemma}
    Let $G$ be a finite group. Then Rubin's Algorithm (\cref{cons:rubin}) is a closure operator $\langle - \rangle \colon \mathcal{P}(\mathbb{I}(\Sub(G))) \to \mathcal{P}(\mathbb{I}(\Sub(G)))$. The closed sets of this closure operator are exactly the transfer systems for $G$.
\end{lemma}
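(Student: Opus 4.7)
The plan is to leverage the content of \cref{cons:rubin} itself, which already asserts that $\langle S \rangle$ is the minimal transfer system containing $S$. Given this, both tasks in the lemma reduce to bookkeeping. Specifically, I would split the argument into two steps: first verify the three closure axioms (extensivity, monotonicity, idempotence) for $\langle-\rangle$, and then identify the closed sets with $\mathsf{Tr}(G)$.

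For extensivity, I would note that taking $g = e$ in the construction of $S_1$ shows $S_0 \subseteq S_1$, choosing $L = H$ in the construction of $S_2$ shows $S_1 \subseteq S_2$ (since $L \cap K = H \cap K = K$ whenever $K \leqslant H$, which holds for arrows $(K,H)$), and taking the degenerate ($n = 0$, $H_0 = K$ or $H_0 = H$) composition chain in $S_3$ shows $S_2 \subseteq S_3$. Hence $A \subseteq \langle A \rangle$. For monotonicity, each of $S_1, S_2, S_3$ is defined as a union indexed by the preceding set, so replacing $A$ by $B \supseteq A$ can only enlarge each stage; inducting through the four stages gives $\langle A \rangle \subseteq \langle B \rangle$.

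For the remaining two claims (idempotence and the characterization of closed sets), I would prove the following single statement, which handles both at once: \emph{for any $A \subseteq \mathbb{I}(\Sub(G))$, $\langle A \rangle = A$ if and only if $A$ is a transfer system}. The ``only if'' direction is immediate from \cref{cons:rubin}, since $\langle A \rangle$ is always a transfer system. For ``if'', suppose $\mathsf{T}$ is a transfer system; I would show $S_i(\mathsf{T}) \subseteq \mathsf{T}$ for each $i$ by walking through the axioms: $S_1(\mathsf{T}) \subseteq \mathsf{T}$ by the conjugation axiom, $S_2(\mathsf{T}) \subseteq \mathsf{T}$ by the restriction axiom, and $S_3(\mathsf{T}) \subseteq \mathsf{T}$ by the composition axiom (with the identity axiom covering the length-zero case). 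This gives $\langle \mathsf{T} \rangle \subseteq \mathsf{T}$, and combined with extensivity yields equality. Finally, applying this equivalence to $\langle A \rangle$ (which is a transfer system) gives $\langle \langle A \rangle \rangle = \langle A \rangle$, establishing idempotence; and the equivalence itself is precisely the identification of closed sets with transfer systems.

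There is no real obstacle here, as the lemma is essentially a repackaging of \cref{cons:rubin} in the language of closure operators. The only point requiring a modicum of care is the extensivity check at the transition $S_2 \subseteq S_3$, where one must allow a degenerate chain so that $(K,H) \in S_2$ implies $(K,H) \in S_3$; this is a convention baked into Rubin's construction and costs nothing to verify.
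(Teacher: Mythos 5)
Your proposal is correct and follows essentially the same route as the paper's own proof: extensivity and monotonicity are read off stage by stage from \cref{cons:rubin}, and idempotence together with the identification of closed sets is deduced from the fact (built into the construction, and ultimately Rubin's Theorem A.2) that $\langle S \rangle$ is the minimal transfer system containing $S$. The only difference is that you spell out the direct verification that a transfer system is a fixed point by walking through the four axioms, whereas the paper simply cites Rubin's theorem for this; your unpacking is a welcome expansion rather than a different argument, and your observation about the degenerate chain in the $S_2 \subseteq S_3$ step correctly flags the one small convention in Rubin's construction that makes extensivity go through.
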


\begin{proof}
    It is clear by construction that for $S \subseteq \mathbb{I}(\Sub(G))$ we have $S \subseteq \langle S \rangle$. If $S \subseteq S'$ then $S_i \subseteq S_i'$ for each $i \in \{0,1,2,3\}$ in \cref{cons:rubin}, validating the second axiom. The third axiom, as well as the claim regarding the closed sets follows from \cite[Theorem A.2]{rubin}.
\end{proof}

The power of casting Rubin's algorithm as a closure operator is that there are highly efficient algorithms for computing the closed sets of closure operators for finite sets. In \cref{appendix:a} we present a description of a naive algorithm, some pseudocode, and a worked example for the case of $C_{p^2q}$. A full implementation of this algorithm in \texttt{C++} along with other operations for transfer systems can be found in \cite{balchin2025ninftysoftwarepackagehomotopical}.

The algorithm filters the set $\mathsf{Tr}(G)$ into subsets based on which stage the transfer system is found. This stratification in fact is by the size of the minimal generating set of the transfer systems as we will discuss in the next section. It is hoped that considering this stratification may provide an insight on how to prove results regarding $|\mathsf{Tr}(G)|$. We discuss this idea in more detail in \cref{sec:filtration}.

\subsection{Minimal Generating Sets}

Now that we have recalled how to generate the smallest transfer system containing a set of arrows $S$, we can begin to study the generating sets for a fixed transfer system $\mathsf{T}$. As usual, we are interested in the smallest such set(s):

\begin{definition}
    Let $\mathsf{T} \in \mathsf{Tr}(G)$. Then a subset $S \subseteq \mathsf{T}$ is a \emph{minimal generating set for $\mathsf{T}$} if:
    \begin{enumerate}
        \item $\langle S \rangle = \mathsf{T}$ and 
        \item $\langle S \setminus \{s\} \rangle \subsetneq \mathsf{T}$ for all $s \in S$.
    \end{enumerate}
\end{definition}

\begin{remark}
    From now on we will disregard the pairs $(H,H)$ whenever we discuss transfer systems or their generation as these elements do not need to be explicitly considered when computing transfer systems. We will say that an element $(H,H) \in \mathbb{I}(\Sub(G))$ is an \emph{identity element}, and as such will ask that $S$ consists of non-identity elements of $\mathbb{I}(\Sub(G))$.
\end{remark}

Minimal generating sets are the primary focus of this paper. It is clear that there is a minimal generating set for every transfer system, however the following simple example demonstrates that minimal generating sets need not be unique.

\begin{example}\label{ex:diff-min}
    Let $G = C_{p^2}$ and consider the complete transfer system $\mathsf{T}_C \in \mathsf{Tr}(G)$. That is, the transfer system given by $\mathsf{T}_C = \{ (1,C_p), (1,C_{p^2}), (C_p,C_{p^2}) \}$. Then there are two minimal generating sets for $\mathsf{T}_C$, namely:
    \begin{enumerate}
        \item $S_a =  \{ (1,C_p), (C_p,C_{p^2}) \}$, and
        \item $S_b =  \{ (1,C_{p^2}), (C_p,C_{p^2}) \}$.
    \end{enumerate}
    These generating sets have different properties. The set $S_a$ has the shortest possible arrows, and this is our preferred representation. The generating set $S_b$ instead favors the maximal arrows, and are the generating sets that have been previously considered for this specific case  in \cite[Section 5]{fooqw}.
\end{example}

While we have now seen that even in the simplest case there may be more than one minimal generating set for a given transfer system, we will now prove that all minimal generating sets have the same cardinality, and as such, the function $\mathfrak{m} \colon \mathrm{Tr}(G) \to \mathbb{N}$ assigning to each transfer system the cardinality of a minimal generating set is well defined. We first introduce some required intermediate constructions.

\begin{lemma}
\label{lemma-generatingedge}
    Let $G$ be a finite group and let $S$ be a set of non-identity elements of $\mathbb{I}(\Sub(G))$. Let $(K , H) \in \mathbb{I}(\Sub(G))$. If $(K, H) \in \langle S \rangle$, then there exist $K' \leqslant H' \leqslant G$ and $g \in G$ with $(K', H') \in S$ and $K \leqslant (K')^g$, $H \leqslant (H')^g$.
\end{lemma}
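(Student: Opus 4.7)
The plan is to unwind Rubin's algorithm one layer at a time and read off the required containments from the outermost step of the generation process. Given $(K,H) \in \langle S\rangle = S_3$, the defining property of $S_3$ (\cref{cons:rubin}) supplies an integer $n \geqslant 0$ and subgroups $H_0, \ldots, H_n$ with
\[
(K,H_0),\ (H_0,H_1),\ \ldots,\ (H_n,H) \in S_2.
\]
Every element of $S_2$ has its first coordinate contained in its second, so this produces the nested chain $K \leqslant H_0 \leqslant \cdots \leqslant H_n \leqslant H$; in particular $K$ is contained in every subgroup containing $H_n$.

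Next I would focus on the final arrow $(H_n, H) \in S_2$. The formula defining $S_2$ provides some $(A,B) \in S_1$ and some $L \leqslant B$ with $(H_n, H) = (L \cap A, L)$; equating coordinates gives $L = H$, and hence $H \leqslant B$ and $H_n = H \cap A \leqslant A$.

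Finally I would unwind $S_1$: since $(A, B) \in S_1$ there exist $(K', H') \in S$ and $g \in G$ with $A = (K')^g$ and $B = (H')^g$. Combining this with the previous steps yields $H \leqslant (H')^g$ on the nose, together with $K \leqslant H_n \leqslant A = (K')^g$ via the chain, which is exactly the conclusion.

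I do not expect any real obstruction. The argument is a direct reverse-trace through the four stages of \cref{cons:rubin}, and the only delicate point is choosing which arrow in the composition chain to analyze: selecting the terminal one ensures that the restriction parameter $L$ coincides with $H$, which is precisely what forces the containment $H \leqslant (H')^g$. Degenerate cases (a trivially short chain, or identity arrows appearing within the chain) cause no difficulty, because the analysis of the last arrow $(H_n,H)$ still produces a legitimate witness $(K',H') \in S$ regardless.
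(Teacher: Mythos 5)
Your proof is correct and takes essentially the same route as the paper: both proofs write $(K,H)$ as the endpoint of a composition chain in $S_2$, focus on the terminal arrow of that chain, and trace that single arrow back through the restriction stage ($S_2 \to S_1$) and then the conjugation stage ($S_1 \to S_0 = S$) to produce the witness $(K',H')$ and $g$. The observation that the chain $K \leqslant H_0 \leqslant \cdots \leqslant H_n$ carries $K$ into the source of the conjugated pair corresponds exactly to the paper's use of conditions (3) and (5).
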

\begin{proof}
    Since $(K, H) \in \langle S \rangle$, we know by \cref{cons:rubin} that $(K , H)$ can be written as a composition of restrictions of conjugations of elements of $S$, in other words, there exists an $n \geqslant 1$, groups $K_i, H_i, L_i$, and elements $g_i \in G$ for each $0 < i \leqslant n$, such that for each $i$ we have:
    \begin{enumerate}
        \item $(K_i, H_i) \in S$,
        \item $L_i \leqslant H_i^{g_i} $,
        \item $L_1 \cap K_1^{g_1} = K$,
        \item $L_n = H$,
        \item $L_i \cap K_i^{g_i} = L_{i-1}$ if $i >1$.
    \end{enumerate}

    By (2) and (4) we know that $H \leqslant H_n^{g_n} $. By (3) and (5) we know that $K \leqslant K_n^{g_n}$.
    Therefore by (1) the pair $(K_n , H_n)$ and the element $g_n$ are the required ones.
\end{proof}

We will use \cref{lemma-generatingedge} in combination with a filtration on $\Sub(G)$ to prove that all minimal generating sets are of the same cardinality:

\begin{definition}
\label{definition-rankmap}
Let $G$ be a finite group. Consider the order-preserving map $|-| \colon \Sub(G) \to \Div(\NN)$ that sends a subgroup $H \leqslant G$ to $\lvert H \rvert$. As conjugate subgroups have the same order, this map is $G$-equivariant for the conjugation action on $\Sub(G)$ and the trivial action on $\Div(\NN)$. 

Consider also the rank map $r \colon \Div(\NN) \to \NN$, which sends $N = p_1^{n_1} p_2^{n_2} \dots p_k^{n_k}$ to the sum of the exponents $n_i$. This map is order-preserving, and we denote the composition of these two maps by $P \colon \Sub(G) \to \NN$. 

By abuse of notation we will also write $P \colon \mathbb{I}(\Sub(G)) \to \mathbb{I}(\NN)$ for the extension of this mapping to intervals.
\end{definition}

\begin{construction}
    Let $G$ be a finite group and let $S$ be a set of non-identity elements of $\mathbb{I}(\Sub(G))$. For $N \in \NN$, we define $S_N\coloneqq\{ (K, H) \in S \mid P(K) \geqslant N\}$. That is, $S_N$ is the set of elements of $S$ whose smaller group has total exponent at least $N$. This provides a decreasing filtration $\cdots \subseteq S_2 \subseteq S_1 \subseteq S_0 = S$.
\end{construction}

We will now use Rubin's algorithm (\cref{cons:rubin}) to characterize the images under the map $P \colon \mathbb{I}(\Sub(G)) \to \mathbb{I}(\NN)$ of elements that generate a given transfer system.

\begin{lemma}
\label{lemma-generatefiltration}
    Let $G$ be a finite group and let $S$ be a set of non-identity elements of $\mathbb{I}(\Sub(G))$. Let $(K, H)$ be an arbitrary element of $\mathbb{I}(\Sub(G))$, and let $N=P(K)$. If $(K, H) \in \langle S \rangle$ then there exists a subset $S' \subseteq S_N$ such that $(K, H) \in \langle S' \rangle$ and at most one morphism of $S'$ is not contained in $S_{N+1}$.
\end{lemma}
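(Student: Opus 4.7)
The plan is to unpack the hypothesis $(K,H) \in \langle S \rangle$ directly via \cref{cons:rubin}. By that construction, there exists a chain $K = L_0 \leqslant L_1 \leqslant \cdots \leqslant L_n = H$ in $\Sub(G)$, together with pairs $(K_i, H_i) \in S$ and conjugating elements $g_i \in G$ for $i=1,\ldots,n$, such that for each $i$ we have $L_i \leqslant H_i^{g_i}$ and $L_{i-1} = L_i \cap K_i^{g_i}$. In other words, $(K,H)$ is expressed as a composition of restrictions of conjugates of the $(K_i, H_i)$. By discarding any steps where $L_{i-1} = L_i$ (these correspond to identity arrows and contribute nothing to the composition), I may assume the chain is strictly increasing: $L_{i-1} < L_i$ for all $i$.

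The key numerical input now comes from the map $P$ of \cref{definition-rankmap}. First, since $L_{i-1} < L_i$ are proper containments of subgroups, $|L_{i-1}|$ is a \emph{proper} divisor of $|L_i|$, so $P(L_{i-1}) < P(L_i)$ strictly. Iterating from $P(L_0) = N$ gives $P(L_{i-1}) \geqslant N + (i-1)$. Second, since $L_{i-1} \leqslant K_i^{g_i}$ and $P$ is conjugation-invariant, we have $P(K_i) = P(K_i^{g_i}) \geqslant P(L_{i-1}) \geqslant N + (i-1)$.

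Combining these estimates, $(K_i, H_i) \in S_N$ for every $i$, and moreover $(K_i, H_i) \in S_{N+1}$ whenever $i \geqslant 2$. Setting
\[
S' \defeq \{ (K_i, H_i) \mid 1 \leqslant i \leqslant n \} \subseteq S_N,
\]
at most one element of $S'$ — namely $(K_1, H_1)$ — can fail to lie in $S_{N+1}$. Finally, running the same sequence of conjugations, restrictions, and compositions inside $\langle S' \rangle$ recovers $(K,H)$, so $(K,H) \in \langle S' \rangle$ as required.

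The main obstacle is really the strict-monotonicity observation in the second paragraph: this is what upgrades the trivial bound $P(K_i) \geqslant N$ (which would only give $S' \subseteq S_N$) to the stronger bound $P(K_i) \geqslant N+1$ for $i \geqslant 2$. Everything else is essentially bookkeeping on \cref{cons:rubin}; the cleanup step of deleting identity arrows from the composition chain needs a brief justification but causes no real trouble, since the transfer-system axioms make identity edges free.
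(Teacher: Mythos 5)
Your proof is correct and follows essentially the same approach as the paper's: unwind the Rubin decomposition of $(K,H)$, observe that composing restrictions along a chain whose base is $K$ forces the rank of every source $K_i$ (for $i \geqslant 2$) to exceed $N$, and collect the $(K_i,H_i)$ into $S'$. The only cosmetic difference is that you prune \emph{all} identity steps and obtain the sharper estimate $P(K_i) \geqslant N+(i-1)$, whereas the paper only uses the strict containment $K < L_1$ and then appeals to nestedness of the $L_i$; both routes yield the needed bound $P(K_i) \geqslant N+1$ for $i \geqslant 2$.
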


\begin{proof}
    We begin by following the same proof as in \cref{lemma-generatingedge}. In particular, there again exists an $n \geqslant 1$, groups $K_i, H_i, L_i$, and elements $g_i \in G$ for each $0 < i \leqslant n$, such that (1)-(5) of that proof hold.

    By (1) and (3) the interval $(K_1, H_1)$ belongs to $S_N$. We can assume that the subgroup $L_1 \cap g_1 K_1 g_1^{-1} = K$ is a proper subgroup of $L_1$, else we could just remove that arrow from the composition. This, together with (1) and (5), means that for each $i > 1$ the element $(K_i, H_i)$ belongs to $S_{N+1} \subseteq S_N$. The subset $S'$ is formed by all the intervals $(K_i, H_i)$, and hence indeed $(K, H) \in \langle S' \rangle$ as required.
\end{proof}

\begin{proposition}\label{prop:all-same-size}
    Let $G$ be a finite group and $\mathsf{T} \in \mathsf{Tr}(G)$. Let $S$ and $R$ be two minimal generating sets for $\mathsf{T}$. Then there exists a bijection $f \colon S \to R$ which for each $n \in \NN$ sends $a \in S_n \setminus S_{n+1}$ to $f(a) \in R_n \setminus R_{n+1}$.
\end{proposition}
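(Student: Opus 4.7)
The plan is to reduce the proposition to the cardinality equality $|S_n \setminus S_{n+1}| = |R_n \setminus R_{n+1}|$ for each $n$, after which any level-by-level matching assembles into the desired bijection $f$. First I would deduce from \cref{lemma-generatefiltration} that $\langle S_{n+1} \rangle = \langle R_{n+1} \rangle$ for every $n$, since both coincide with the closure of $\{(K,H) \in \mathsf{T} : P(K) > n\}$; denote this common transfer system by $\mathsf{T}^n$. Minimality of $S$ then forces every $s \in S_n \setminus S_{n+1}$ to lie outside $\mathsf{T}^n$: otherwise $s \in \mathsf{T}^n = \langle S_{n+1} \rangle \subseteq \langle S \setminus \{s\} \rangle$ would give $\langle S \setminus \{s\} \rangle = \mathsf{T}$, contradicting minimality.

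The main idea is then to interpret $S_n \setminus S_{n+1}$ as a set of distinguished representatives in a canonical poset attached to $\mathsf{T}$. On the set $\mathcal{A}_n := \{(K,H) \in \mathsf{T} : P(K) = n\} \setminus \mathsf{T}^n$ I would introduce the preorder $s \to t$ iff $t \in \langle \mathsf{T}^n \cup \{s\} \rangle$, with induced equivalence $s \equiv t$ iff $s \to t$ and $t \to s$. Writing $[s]$ for the class of $s$, the central claim is that $S_n \setminus S_{n+1}$ contains exactly one element of each maximal class of $\mathcal{A}_n / {\equiv}$. Granting this, $|S_n \setminus S_{n+1}|$ equals the number of maximal classes, which is an invariant of $\mathsf{T}$ alone, and in particular coincides with $|R_n \setminus R_{n+1}|$.

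I would verify the claim in three parts. For at most one representative per class: if distinct $s_1, s_2 \in S_n \setminus S_{n+1}$ satisfied $s_1 \equiv s_2$, then $s_1 \in \langle \mathsf{T}^n \cup \{s_2\} \rangle \subseteq \langle S \setminus \{s_1\} \rangle$, violating minimality. For at least one representative per maximal class $C$, I would pick $s_0 \in C$ and apply \cref{lemma-generatefiltration} via $S$ to obtain $s \in S_n \setminus S_{n+1}$ with $s \to s_0$; then $[s] \geq C$ forces $[s] = C$ by maximality. For the most delicate part, that no representative lies in a non-maximal class: if $s \in S_n \setminus S_{n+1}$ had $[s]$ non-maximal, pick $t$ with $t \to s$ and $s \not\to t$, apply \cref{lemma-generatefiltration} to $t$ to produce $s' \in S_n \setminus S_{n+1}$ with $s' \to t$, and use transitivity of $\to$ to deduce $s' \to s$. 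Since $s' = s$ would force $s \to t$, we must have $s' \neq s$, whence $s \in \langle \mathsf{T}^n \cup \{s'\} \rangle \subseteq \langle S \setminus \{s\} \rangle$ contradicts minimality.

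The hard part will be cleanly executing the third step above, as it requires chaining \cref{lemma-generatefiltration}, transitivity of $\to$, and the minimality hypothesis in precisely the right order; the remaining ingredients follow directly from the definitions and prior results. The real conceptual content is the recognition that Rubin's closure, together with the filtration by $P$, endows $\mathcal{A}_n$ with a canonical preorder whose maximal equivalence classes are in bijection with the level-$n$ part of any minimal generating set.
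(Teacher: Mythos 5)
Your argument is correct, and it reorganizes the same underlying mechanism as the paper in a somewhat more conceptual way. The paper defines its relation directly on the bipartite set $(S_n \setminus S_{n+1}) \cup (R_n \setminus R_{n+1})$ by $a\,f_n\,b$ iff $a \in \langle b, \langle S_{n+1}\rangle\rangle$ (the opposite orientation to your $\to$), then proves transitivity, shows each $a$ on the $S$-side is $f_n$-related to some $b$ on the $R$-side via \cref{lemma-generatefiltration}, and uses minimality plus transitivity to upgrade $f_n$ to an equivalence relation with classes of exactly two elements, one from each side; the bijection is then immediate. Your proof instead works on the ambient set $\mathcal{A}_n$ of all level-$n$ arrows of $\mathsf{T}$ not already in $\mathsf{T}^n$, equips it with the preorder $\to$, and shows that the level-$n$ part of any minimal generating set is precisely a transversal of the maximal $\equiv$-classes. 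Both proofs invoke \cref{lemma-generatefiltration} and minimality in essentially the same places, so the engine is identical; what your framing buys is the explicit identification of $|S_n \setminus S_{n+1}|$ as an invariant of the pair $(\mathsf{T}, n)$ alone --- the number of maximal classes of $\mathcal{A}_n / {\equiv}$ --- whereas the paper obtains the numerical equality as a byproduct of the bijection without naming this intrinsic count. One small point worth writing out: your identification of $\mathsf{T}^n$ with the closure of $\{(K,H)\in\mathsf{T} : P(K) > n\}$ needs the one-line check that for $(K,H)\in\mathsf{T}$ with $P(K)>n$, \cref{lemma-generatefiltration} applied at $N=P(K)$ yields $(K,H)\in\langle S_{P(K)}\rangle\subseteq\langle S_{n+1}\rangle$; the paper sidesteps this by proving $\langle S_n\rangle = \langle R_n\rangle$ element by element instead of via a common intrinsic description, but your version is equally valid.
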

\begin{proof}
    First, let $N=P(G)$. For any $n \leqslant N$ and $a \in S_n$, since $a \in \mathsf{T}=\langle R \rangle$, by \cref{lemma-generatefiltration} we obtain that $a \in \langle R_n \rangle$. Applying this also to elements of $R$, we obtain that $\langle S_n \rangle = \langle R_n \rangle$.
    
    For a given $n \leqslant N$, we will construct a function $f_n \colon S_n \setminus S_{n+1} \to R_n \setminus R_{n+1}$ which is a bijection; the $f_n$ will assemble to give the desired $f$.

    Consider the following relation $f_n$ on the set $X = (S_n \setminus S_{n+1}) \cup (R_n \setminus R_{n+1})$. For $a, b \in X$ we say that $a f_n b$ if $a\in \langle b, \langle S_{n+1} \rangle \rangle = \langle b, \langle R_{n+1} \rangle \rangle$. We prove some properties of $f_n$.
    
    If $a f_n b$, the arrows $a$ and $b$ cannot both belong to the same of $S_n \setminus S_{n+1}$ or $R_n \setminus R_{n+1}$ unless $a=b$, otherwise it would contradict the fact that $S$ and $R$ are minimal generating sets.

    Next, we prove that $f_n$ is transitive. If $a f_n b$ and $b f_n c$, then $a\in \langle b, \langle S_{n+1} \rangle \rangle$ and $b\in \langle c, \langle S_{n+1} \rangle \rangle$, so $a\in \langle \langle c, \langle S_{n+1} \rangle \rangle, \langle S_{n+1} \rangle \rangle = \langle c, \langle S_{n+1} \rangle \rangle$.

    For each $a \in S_n \setminus S_{n+1}$, we have that $a \in \mathsf{T} = \langle R \rangle$, so we apply \cref{lemma-generatefiltration}, obtaining a subset $R' \subset R_n$ that satisfies that $a \in \langle R' \rangle$, and we use $b$ to denote the unique element of $R'$ that belongs to $R_n \setminus R_{n+1}$. This means that for each $a$ there exists a $b \in R_n \setminus R_{n+1}$ such that $a f_n b$. By the same argument we can find such an $a$ for each $b \in R_n \setminus R_{n+1}$, with $b f_n a$.

    Given $a f_n b$ with $a \in S_n \setminus S_{n+1}$, from the previous paragraph we obtain a $c \in S_n \setminus S_{n+1}$ with $b f_n c$. From the transitiveness of $f_n$ we obtain that $a f_n c$, which means that $a = c$, and therefore $f_n$ is symmetric and an equivalence relation.

    Finally, assume that there are $a$ in $S_n \setminus S_{n+1}$ and $b, b'$ in $R_n \setminus R_{n+1}$ with $a f_n b$ and $a f_n b'$. Then by symmetry $b f_n a$, and by transitivity $b f_n b'$, so $b = b'$. This means that each equivalence class of $f_n$ contains exactly two elements, one in $S_n \setminus S_{n+1}$ and the other in $R_n \setminus R_{n+1}$. Therefore it determines a bijection between these two sets, which we also denote by $f_n$.
\end{proof}

\begin{corollary}\label{cor:well-defined-map}
    Let $G$ be a finite group. Then there is a well-defined mapping $\mathfrak{m} \colon \mathsf{Tr}(G) \to \NN$ defined by sending a transfer system $\mathsf{T}$ to the cardinality of a minimal generating set of $\mathsf{T}$.
\end{corollary}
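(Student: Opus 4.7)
The plan is to obtain this immediately from \cref{prop:all-same-size}. Well-definedness of $\mathfrak{m}$ amounts to two claims: (i) every transfer system $\mathsf{T} \in \mathsf{Tr}(G)$ admits at least one minimal generating set, and (ii) any two minimal generating sets of $\mathsf{T}$ have the same cardinality.

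For (i), I would argue by a straightforward termination argument. The set $\mathsf{T}$ minus its identity elements is itself a finite generating set for $\mathsf{T}$. If this set is not already minimal, then by definition there is some element $s$ whose removal still generates $\mathsf{T}$; remove it. Since $\mathbb{I}(\Sub(G))$ is finite, iterating this process must terminate, and the resulting set is minimal by construction.

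For (ii), I would simply invoke \cref{prop:all-same-size}. Given any two minimal generating sets $S$ and $R$ for the same transfer system $\mathsf{T}$, the proposition produces an explicit bijection $f \colon S \to R$ (assembled from bijections between the filtration strata $S_n \setminus S_{n+1}$ and $R_n \setminus R_{n+1}$), and in particular $|S| = |R|$. Thus the assignment $\mathsf{T} \mapsto |S|$ depends only on $\mathsf{T}$, and the mapping $\mathfrak{m} \colon \mathsf{Tr}(G) \to \NN$ is well-defined.

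There is essentially no obstacle here, as the substance of the statement has already been absorbed into \cref{prop:all-same-size}; the corollary is a purely formal consequence.
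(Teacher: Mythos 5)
Your proposal is correct and matches the paper's approach: the paper states \cref{cor:well-defined-map} without proof as an immediate consequence of \cref{prop:all-same-size}, having earlier remarked that existence of a minimal generating set is clear. Your explicit termination argument for existence is a harmless elaboration of that remark.
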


\section{Rainbows}\label{sec:rainbows}

We now introduce a particular class of minimal generating set which will occur throughout. 

\begin{definition}
    A \emph{rainbow} on $\NN$ is a finite set of ordered pairs $\{(a_i,b_i)\}_{i \in [0,k]}$ of natural numbers such that if $i < j$ then $a_i < a_j < b_j < b_i$. 
\end{definition}

In other words, a rainbow is a finite set of nested arcs on $\NN$:

 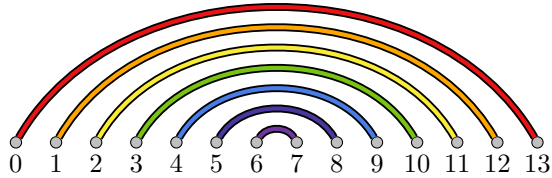
\begin{figure}[h!]
\centering
\begin{tikzpicture}[scale=0.8]
\foreach \i in {0,...,13}
{
    \node[fill=black!25,circle,draw,inner sep = 0pt, outer sep = 0pt, minimum size=1.5mm] (\i) at (\i/1.5,0) {};
    \node at (\i/1.5,-0.35) {\i};
}
\draw[-, line width=3pt, black] (0) edge[bend left=60] (13);
\draw[-, line width=3pt, black] (1) edge[bend left=60] (12);
\draw[-, line width=3pt, black] (2) edge[bend left=60] (11);
\draw[-, line width=3pt, black] (3) edge[bend left=60] (10);
\draw[-, line width=3pt, black] (4) edge[bend left=60] (9);
\draw[-, line width=3pt, black] (5) edge[bend left=60] (8);
\draw[-, line width=3pt, black] (6) edge[bend left=60] (7);
\draw[-, ultra thick, rainbow-red] (0) edge[bend left=60] (13);
\draw[-, ultra thick, rainbow-orange] (1) edge[bend left=60] (12);
\draw[-, ultra thick, rainbow-yellow] (2) edge[bend left=60] (11);
\draw[-, ultra thick, rainbow-green] (3) edge[bend left=60] (10);
\draw[-, ultra thick, rainbow-blue] (4) edge[bend left=60] (9);
\draw[-, ultra thick, rainbow-indigo] (5) edge[bend left=60] (8);
\draw[-, ultra thick, rainbow-violet] (6) edge[bend left=60] (7);
\end{tikzpicture}
\caption{An example of a particularly nice rainbow.}
\end{figure}

\begin{definition}
\label{definition-partialrainbow}
    Let $G$ be a finite group. Let $S$ be a set of non-identity elements of $\mathbb{I}(\Sub(G))$.  We say that $S$ is a \emph{partial rainbow} on $G$ if:
    \begin{enumerate}
        \item the image of $S$ under the map $P \colon \mathbb{I}(\Sub(G)) \to \mathbb{I}(\NN)$ is a rainbow, and
        \item for any $g \in G$ and pair $(K, H)$ in $S$, if $(K^g, H^g) \neq (K, H)$ then $(K^g, H^g)$ does not belong to $S$. In other words, $S$ contains at most one pair from each conjugacy class of pairs of subgroups of $G$.
    \end{enumerate}
\end{definition}

\begin{proposition}
\label{proposition-rainbowsareminimal}
    Let $G$ be a finite group and let $S$ be a partial rainbow on $G$. Then the set $S$ is a minimal generating set of the transfer system $\langle S \rangle$.
\end{proposition}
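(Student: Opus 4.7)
The plan is to argue by contradiction. Assume $S$ is not minimal, so some $s \defeq (K,H) \in S$ already lies in $\langle S \setminus \{s\} \rangle$. Applying \cref{lemma-generatingedge} to this subset produces a pair $(K',H') \in S$ with $(K',H') \neq (K,H)$ and an element $g \in G$ satisfying $K \leqslant (K')^g$ and $H \leqslant (H')^g$. Pushing these inclusions through the order-preserving, conjugation-invariant map $P$ yields the numerical inequalities $P(K) \leqslant P(K')$ and $P(H) \leqslant P(H')$.

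I would then split on how $(P(K),P(H))$ and $(P(K'),P(H'))$ sit inside the rainbow $P(S)$. If they are distinct rainbow elements, the strict nesting axiom of a rainbow forces one of two alternatives: either $P(K) < P(K')$ together with $P(H') < P(H)$, or $P(K') < P(K)$ together with $P(H) < P(H')$. In either alternative, one of the strict inequalities contradicts one of the non-strict inequalities already extracted from $P$, giving the desired contradiction.

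In the remaining case $(P(K),P(H)) = (P(K'),P(H'))$, the inclusions $K \leqslant (K')^g$ and $H \leqslant (H')^g$ combined with the resulting equalities of orders $|K| = |(K')^g|$ and $|H| = |(H')^g|$ force $K = (K')^g$ and $H = (H')^g$. Hence $(K',H') = (K^{g^{-1}}, H^{g^{-1}})$ is a $G$-conjugate of $(K,H)$ distinct from $(K,H)$ and sitting in $S$, which directly contradicts condition~(2) of \cref{definition-partialrainbow} applied to $(K,H)$ and $g^{-1}$. I expect the only subtle point to be recognizing the need for this case split at all; once in place, each case collapses to a one-line argument from \cref{lemma-generatingedge} and the defining conditions of a partial rainbow, with the rainbow axiom handling distinct $P$-images and condition~(2) handling coincident ones.
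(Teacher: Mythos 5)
Your proposal is correct and follows the same route as the paper: apply \cref{lemma-generatingedge} to get a pair $(K',H') \in S \setminus \{(K,H)\}$ and a conjugating element $g$, use the rainbow condition to force $K = (K')^g$ and $H = (H')^g$, then invoke condition~(2) of \cref{definition-partialrainbow} to reach a contradiction. The only difference is cosmetic: you spell out the dichotomy between distinct and coincident $P$-images (and the resulting nesting contradiction) explicitly, whereas the paper compresses this into the single observation that the arcs would otherwise intersect in $\NN$; both versions rely on the same implicit step that $K \leqslant (K')^g$ together with $P(K) = P(K')$ forces $K = (K')^g$ (via divisibility plus equal exponent sums).
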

\begin{proof}
    Assume for contradiction that there exists a pair $a \in S$ with $a = (K, H)$ such that $\langle S \setminus a \rangle = \langle S \rangle$. By ~\cref{lemma-generatingedge} we obtain a pair $(K', H') \in S \setminus a$ and $g \in G$ such that $K \leqslant (K')^g$ and $H \leqslant (H')^g$.

    Since the image of $S$ under $P \colon \Sub(G) \to \NN$ is a rainbow, we in fact know that $H = (H')^g$ and $K = (K')^g$, otherwise the arcs of $(K, H)$ and $(K', H')$ which both belong to $S$ would intersect in $\NN$. With Condition~(2) of \cref{definition-partialrainbow} we obtain that $g = e$, and $K = K'$, $H = H'$, reaching a contradiction since $(K', H') \in S \setminus a$.
\end{proof}

While \cref{prop:all-same-size} tells us that all minimal generating sets for a fixed transfer system have the same size, we can say something even stronger in the case of partial rainbows:

\begin{proposition}
    Let $S,$ $S'$ be two partial rainbows for the finite group $G$. If the generated $G$-transfer systems $\langle S \rangle$ and $\langle S' \rangle$ are equal, then for each pair $(K, H)$ in $S$, there exists a $g \in G$ such that $(K^g, H^g) \in S'$. That is, $S$ and $S'$ coincide up to conjugation.
\end{proposition}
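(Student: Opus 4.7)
The plan is to apply \cref{lemma-generatingedge} in both directions to sandwich $(K,H)$, and then leverage the rainbow condition to collapse all the resulting subgroup containments into equalities.

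First, I fix an arbitrary $(K,H) \in S$. Since $(K,H) \in \langle S \rangle = \langle S' \rangle$, \cref{lemma-generatingedge} produces $(K',H') \in S'$ and $g \in G$ with $K \leqslant (K')^g$ and $H \leqslant (H')^g$. Applying the same lemma again to $(K',H') \in S' \subseteq \langle S \rangle$, I extract $(K'',H'') \in S$ and $h \in G$ with $K' \leqslant (K'')^h$ and $H' \leqslant (H'')^h$. Translating these subgroup containments to divisibility of orders and then through the map $P$ of \cref{definition-rankmap}, I obtain the two chains $P(K) \leqslant P(K') \leqslant P(K'')$ and $P(H) \leqslant P(H') \leqslant P(H'')$.

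Next I use the rainbow hypothesis. The pairs $(P(K),P(H))$ and $(P(K''),P(H''))$ both lie in the rainbow $P(S)$, so if distinct they must be strictly nested. Nesting one way forces $P(H'') < P(H)$, contradicting $P(H) \leqslant P(H'')$; nesting the other way forces $P(K'') < P(K)$, contradicting $P(K) \leqslant P(K'')$. Hence the two pairs coincide in $\NN \times \NN$, and the chains collapse into $P(K) = P(K')$ and $P(H) = P(H')$. In particular $|K| = |(K')^g|$ and $|H| = |(H')^g|$, so the containments $K \leqslant (K')^g$ and $H \leqslant (H')^g$ are actually equalities. Conjugating by $g^{-1}$ yields $(K^{g^{-1}},H^{g^{-1}}) = (K',H') \in S'$, which gives the desired conjugate with conjugating element $g^{-1}$.

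The main subtlety is the orientation bookkeeping in the nesting step: one must carefully track how subgroup containment, divisibility of orders, and the inner-versus-outer orientation of rainbow arcs interact, since an outer arc corresponds to a \emph{smaller} $P(K)$ and a \emph{larger} $P(H)$. Everything else — the two invocations of \cref{lemma-generatingedge} and the upgrade from containment to equality once orders match — is routine. I note that condition~(2) of \cref{definition-partialrainbow} on conjugacy classes plays no role in this proof; only the rainbow condition on $P(S)$ is used.
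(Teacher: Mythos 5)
Your proof is correct and takes essentially the same route as the paper: apply \cref{lemma-generatingedge} twice to sandwich $(K,H)$ between elements of $S'$ and $S$, then use the nesting property of the rainbow $P(S)$ to force the containments to be equalities. Your closing observation that condition~(2) of \cref{definition-partialrainbow} is never invoked is accurate and also holds, implicitly, of the paper's argument.
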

\begin{proof}
    Let $(K, H)$ be a pair in $S$. Since $(K, H) \in \langle S \rangle = \langle S' \rangle$, by \cref{lemma-generatingedge} we obtain a pair $(K', H') \in S'$ and $g \in G$ such that $K \leqslant (K')^{g}$ and $H \leqslant (H')^g$. Now $(K', H') \in \langle S' \rangle = \langle S \rangle$, so applying \cref{lemma-generatingedge} again we obtain $(K'', H'') \in S$ and $g' \in G$ such that $K' \leqslant (K'')^{g'}$ and $H' \leqslant (H'')^{g'}$. Since $S$ is a partial rainbow, we know that $K = (K')^g = (K'')^{gg'}$ and $K = (H')^g = (H'')^{gg'}$, otherwise the respective arcs of $(K, H)$ and  $(K'', H'')$ would intersect in $\NN$.
\end{proof}

\begin{remark}
    While every partial rainbow gives a minimal generating set, it should be noted that there are transfer systems whose minimal generating sets cannot be realized as a partial rainbow. Such an example is already provided in \cref{ex:diff-min}. Indeed, for each choice of minimal generating set we observe that we do not get a partial rainbow under the mapping $P$.
\end{remark}

\section{Complexity and width of groups}

In \cref{cor:well-defined-map} we introduced the well-defined map $\mathfrak{m} \colon \mathrm{Tr}(G) \to \NN$ which sends a transfer system to the cardinality of one of its minimal generating sets. We immediately extract two numerical invariants.

\begin{definition}
Let $G$ be a finite group.
\begin{enumerate}
    \item The \emph{width} of $G$, denoted $ \w (G)$, is $\mathfrak{m}(\mathsf{T}_C)$ where $\mathsf{T}_C$ is the complete transfer system for $G$.
    \item The \emph{complexity} of $G$, denoted $\mathfrak{c}(G)$, is $\max\{\mathfrak{m}(\mathsf{T}) \mid \mathsf{T} \in \mathsf{Tr}(G)\}$.
\end{enumerate}
\end{definition}

\begin{example}
    Let $G = C_{p^n}$. Then the width and complexity of $G$ are computed in \cite[\S 5]{fooqw} as $ \w (G) = \mathfrak{c}(G) = n$. A minimal generating set realizing both of these can be taken as $\{(C_{p^i}, C_{p^{i+1}}) \mid 0 \leqslant i < n\}$.
\end{example}

Before continuing with a more exotic example, we should discuss how, in practice, one can access these invariants. That is, given a transfer system $\mathsf{T} \in \mathsf{Tr}(G)$, how does one find a minimal generating set for $\mathsf{T}$? In \cite[\S 5]{fooqw} an algorithm for minimal generating sets for transfer systems for $C_{p^n}$ is developed using the notion of \emph{maximal arrows}. However, one quickly sees that this algorithm breaks down for other groups. We shall instead once again consider \cref{cons:rubin}, and note that it is possible to run this algorithm backwards. In short, we have the following:

\begin{construction}[Reverse Rubin algorithm]\label{cons:rev-rubin}
    Let $G$ be a finite group and $\mathsf{T} \in \mathsf{Tr}(G)$. Define
    \begin{align*}
        T_0 \coloneqq & \, \mathsf{T},\\
        T_{-1} \coloneqq & \, \{(K,H) \in T_0 \mid (K,H) \text{ cannot be written as a composite of elements of }T_0\} ,\\
        T_{-2} \coloneqq & \, \{(K,H) \in T_{-1} \mid (K,H) \text{ is not of the form } (L \cap K, K) \text{ for some } L \leqslant H\},\\
        T_{-3} \coloneqq & \, T_{-2} / G \text{ where } G \text{ acts via conjugation}.
    \end{align*}
    Then $T_{-3}$ is a minimal generating set for $\mathsf{T}$.
\end{construction}

Up to making a choice of representative in each conjugacy class, this algorithm produces a preferred minimal generating set for a given transfer system. In general, there seem to be many minimal generating sets for a given transfer system, even when $G$ is abelian (in which case there is no choice in the reverse Rubin algorithm). We feel that there is more mileage to be gained from the existence of this preferred minimal generating set, beyond our own uses of it.

Let us now provide a simple example which demonstrates a computation of the width and complexity, as well as some other somewhat surprising phenomena.

\begin{example}\label{ex:inclusion_not_respected}
    Let $G = C_{p^2q}$ for $p$ and $q$ distinct primes. 

We can consider the two following sets and the transfer systems that they generate under \cref{cons:rubin} (noting that we can retrieve the given minimal generating sets using \cref{cons:rev-rubin}):
\[
S_a = \qquad
\begin{gathered}
\begin{tikzpicture}[scale= 0.7]
\node[fill=dark-green!50,circle,draw,inner sep = 0pt, outer sep = 0pt, minimum size=1.5mm] (1) at (0,0) {};
\node[fill=dark-green!50,circle,draw,inner sep = 0pt, outer sep = 0pt, minimum size=1.5mm] (p) at (2,0) {};
\node[fill=dark-green!50,circle,draw,inner sep = 0pt, outer sep = 0pt, minimum size=1.5mm] (p2) at (4,0) {};
\node[fill=dark-green!50,circle,draw,inner sep = 0pt, outer sep = 0pt, minimum size=1.5mm] (q) at (0,2) {};
\node[fill=dark-green!50,circle,draw,inner sep = 0pt, outer sep = 0pt, minimum size=1.5mm] (qp) at (2,2) {};
\node[fill=dark-green!50,circle,draw,inner sep = 0pt, outer sep = 0pt, minimum size=1.5mm] (qp2) at (4,2) {};
\draw[->, black!70] (1) edge (p);
\draw[->, black!70] (1) edge[bend right = 40] (p2);
\draw[->, black!70] (1) edge (qp);
\draw[->, black!70] (1) edge (qp2);
\draw[->, black!70] (1) edge (q);
\draw[->, black!70] (p) edge (p2);
\draw[->, black!70] (p) edge (qp);
\draw[->, black!70] (p) edge (qp2);
\draw[->, black!70] (q) edge (qp);
\draw[->, black!70] (q) edge[bend left = 40] (qp2);
\draw[->, black!70] (qp) edge (qp2);
\draw[->, black!70] (p2) edge (qp2);
\draw[->,line join=round,decorate, decoration={zigzag,segment length=4,amplitude=.9,post=lineto,post length=2pt}]  (-3.2,1) -- (-0.8,1);
\node[fill=white] at (-2,1) {$\langle - \rangle$};
\begin{scope}[xshift = -8cm]
    \node[fill=dark-green!50,circle,draw,inner sep = 0pt, outer sep = 0pt, minimum size=1.5mm] (1) at (0,0) {};
\node[fill=dark-green!50,circle,draw,inner sep = 0pt, outer sep = 0pt, minimum size=1.5mm] (p) at (2,0) {};
\node[fill=dark-green!50,circle,draw,inner sep = 0pt, outer sep = 0pt, minimum size=1.5mm] (p2) at (4,0) {};
 \node[fill=dark-green!50,circle,draw,inner sep = 0pt, outer sep = 0pt, minimum size=1.5mm] (q) at (0,2) {};
\node[fill=dark-green!50,circle,draw,inner sep = 0pt, outer sep = 0pt, minimum size=1.5mm] (qp) at (2,2) {};
\node[fill=dark-green!50,circle,draw,inner sep = 0pt, outer sep = 0pt, minimum size=1.5mm] (qp2) at (4,2) {};
\draw[->, black!70] (q) edge (qp);
\draw[->, black!70] (qp) edge (qp2);
\draw[->, black!70] (p2) edge (qp2);
\end{scope}
\end{tikzpicture}
\end{gathered}
\qquad = \langle S_a \rangle
\]
\vspace{5mm}
\[
S_b = \qquad
\begin{gathered}
\begin{tikzpicture}[scale= 0.7]
\node[fill=dark-green!50,circle,draw,inner sep = 0pt, outer sep = 0pt, minimum size=1.5mm] (1) at (0,0) {};
\node[fill=dark-green!50,circle,draw,inner sep = 0pt, outer sep = 0pt, minimum size=1.5mm] (p) at (2,0) {};
\node[fill=dark-green!50,circle,draw,inner sep = 0pt, outer sep = 0pt, minimum size=1.5mm] (p2) at (4,0) {};
\node[fill=dark-green!50,circle,draw,inner sep = 0pt, outer sep = 0pt, minimum size=1.5mm] (q) at (0,2) {};
\node[fill=dark-green!50,circle,draw,inner sep = 0pt, outer sep = 0pt, minimum size=1.5mm] (qp) at (2,2) {};
\node[fill=dark-green!50,circle,draw,inner sep = 0pt, outer sep = 0pt, minimum size=1.5mm] (qp2) at (4,2) {};
\draw[->, black!70] (1) edge (p);
\draw[->, black!70] (1) edge[bend right = 40] (p2);
\draw[->, black!70] (1) edge (qp);
\draw[->, black!70] (1) edge (qp2);
\draw[->, black!70] (1) edge (q);
\draw[->, black!70] (p) edge (p2);
\draw[->, black!70] (p) edge (qp);
\draw[->, black!70] (q) edge (qp);
\draw[->,line join=round,decorate, decoration={zigzag,segment length=4,amplitude=.9,post=lineto,post length=2pt}]  (-3.2,1) -- (-0.8,1);
\node[fill=white] at (-2,1) {$\langle - \rangle$};
\begin{scope}[xshift = -8cm]
    \node[fill=dark-green!50,circle,draw,inner sep = 0pt, outer sep = 0pt, minimum size=1.5mm] (1) at (0,0) {};
\node[fill=dark-green!50,circle,draw,inner sep = 0pt, outer sep = 0pt, minimum size=1.5mm] (p) at (2,0) {};
\node[fill=dark-green!50,circle,draw,inner sep = 0pt, outer sep = 0pt, minimum size=1.5mm] (p2) at (4,0) {};
\node[fill=dark-green!50,circle,draw,inner sep = 0pt, outer sep = 0pt, minimum size=1.5mm] (q) at (0,2) {};
\node[fill=dark-green!50,circle,draw,inner sep = 0pt, outer sep = 0pt, minimum size=1.5mm] (qp) at (2,2) {};
\node[fill=dark-green!50,circle,draw,inner sep = 0pt, outer sep = 0pt, minimum size=1.5mm] (qp2) at (4,2) {};
\draw[->, black!70] (q) edge (qp);
\draw[->, black!70] (p) edge (qp);
\draw[->, black!70] (p) edge (p2);
\draw[->, black!70] (1) edge (qp2);
\end{scope}
\end{tikzpicture}
\end{gathered}
\qquad
= \langle S_b \rangle
\]

From the generating set $S_a$ we can conclude that $ \w (C_{p^2q}) = 3$, and computationally we verify that $S_b$ is has maximal size among all generating sets, and as such $\mathfrak{c}(C_{p^2q}) = 4$. Hence we immediately see, perhaps contrary to intuition, that the complete transfer system does not realize the complexity of the group (and in fact we shall  see in \cref{sec:subes} that the complexity and width can differ arbitrarily).
\end{example}

While we shall see that the complexity of a group is mysterious, the width has a tangible interpretation in terms of the group itself. We recall that a subgroup $H \leqslant G$ is \emph{meet-irreducible} if it cannot be expressed as the intersection of two distinct subgroups of the group. This happens if and only if the subgroup, when viewed as an element of $\Sub(G)$, is meet-irreducible in the lattice theoretic sense. A key lattice theoretic fact that we will require is that every element of a lattice can be obtained as the meet of meet-irreducible elements.

\begin{proposition}\label{prop:width}
    Let $G$ be a finite group. Then $ \w (G)$ is equal to the number of conjugacy classes of meet-irreducible subgroups of $G$. In particular, the collection
    \[S = \{(I,G) \mid I \leqslant G \text{ is meet-irreducible}\} / G\]
    is a minimal generating set for the complete transfer system.
\end{proposition}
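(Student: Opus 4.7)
The plan is to verify both parts of the proposition: that $\langle S \rangle$ equals the complete transfer system $\mathsf{T}_C$, and that removing any element of $S$ strictly shrinks the generated transfer system. For $\langle S \rangle = \mathsf{T}_C$, I would combine the lattice-theoretic fact recalled just before the statement---that every $H \leqslant G$ is a meet of meet-irreducibles $H = I_1 \cap \cdots \cap I_k$---with the pullback closure of transfer systems noted after the definition. Each $(I_j, G)$ is conjugate to an element of $S$ and hence lies in $\langle S \rangle$; iterating pullback closure produces $(H, G) \in \langle S \rangle$. For arbitrary $(H, K) \in \mathbb{I}(\Sub(G))$, applying the restriction axiom to $(H, G)$ with $L = K$ yields $(H \cap K, K) = (H, K) \in \langle S \rangle$, so $\langle S \rangle$ equals $\mathsf{T}_C$.

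For minimality, fix a representative $(I_0, G) \in S$ and set $S' = S \setminus \{(I_0, G)\}$. I want to show $(I_0, G) \notin \langle S' \rangle$. The key step is the following structural claim: \emph{if $(K, G) \in \langle S' \rangle$, then $K$ is an intersection of meet-irreducible subgroups of $G$, none of which is conjugate to $I_0$.} Granting this, if $(I_0, G)$ were in $\langle S' \rangle$ we could write $I_0 = J_1 \cap \cdots \cap J_m$ with each $J_j$ meet-irreducible and not conjugate to $I_0$; meet-irreducibility of $I_0$ then forces $I_0 = J_j$ for some $j$, contradicting non-conjugacy.

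To prove the structural claim I would unpack the stages of Rubin's algorithm (\cref{cons:rubin}) applied to $S'$. Conjugation produces $S'_1 = \{(I, G) \mid I \text{ meet-irreducible, not conjugate to } I_0\}$; restriction produces $S'_2 = \{(L \cap I, L) \mid L \leqslant G,\, (I, G) \in S'_1\}$; composition writes $(K, G) \in \langle S' \rangle$ as a chain $(K, H_0), (H_0, H_1), \ldots, (H_n, G)$ in $S'_2$. The last arrow $(H_n, G) \in S'_2$ forces $H_n$ itself to be a meet-irreducible appearing in $S'_1$ (since its upper group is $G$, the defining presentation $(L \cap I, L)$ of $S'_2$ forces $L = G$ and $H_n = I$). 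Walking backward along the chain, each formula $(H_{i-1}, H_i) \in S'_2$ gives $H_{i-1} = H_i \cap I_{i-1}$ for some meet-irreducible $I_{i-1} \in S'_1$, and likewise for $(K, H_0)$; thus $K$ is an intersection of meet-irreducibles, none of which is conjugate to $I_0$.

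The main obstacle is performing this backward induction cleanly---one must verify at each step that the defining formula for $S'_2$ really does propagate the intersection structure along the composition chain, and in particular that intermediate upper groups do not allow escape from $S'_1$. Once the explicit description of $S'_2$ is written out, however, this reduces to routine bookkeeping, and the meet-irreducibility of $I_0$ delivers the contradiction.
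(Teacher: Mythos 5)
Your proof is correct and follows essentially the same route as the paper: both generate all of $\mathsf{T}_C$ by first getting $(H,G)$ from meet-irreducibles via pullback closure and then getting arbitrary $(H,K)$ via restriction, and both argue minimality by showing a discarded $(I_0,G)$ cannot be recovered. The only substantive difference is that where the paper simply ``observes'' that none of the $(I,G)$ can be obtained through compositions or restrictions, you prove this rigorously with the structural claim that any $(K,G) \in \langle S' \rangle$ has $K$ expressible as an intersection of meet-irreducibles from $S'_1$; this fills a genuine (if small) gap in the paper's argument.
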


\begin{proof}
    We begin with the observation that the collection $S_0 = \{(H,G) \mid H \leqslant G\}$ is a generating set for the complete transfer system. Indeed, pick an arbitrary $(H,K) \in \mathbb{I}(\Sub(G))$. Then we can obtain $(H,K)$ from $(H,G)$ via the restriction rule for transfer systems.

    It now suffices to prove that the claimed set $S$ is the minimal generating set contained in $S_0$. Let $H$ be a subgroup that is not meet-irreducible. Then $H = \bigcap_\lambda I_\lambda$ for $I_\lambda$ some meet-irreducible subgroups. Then the element $(H,G)$ can be obtained via pullback closure of the collection $\{ (I_\lambda,G) \}_\lambda$. We moreover observe that none of the $(I,G)$ can be obtained through compositions or restrictions. Finally, we are free to pick a single conjugacy class of each meet-irreducible subgroup. Hence $S$ is a minimal generating set as claimed.
\end{proof}

\begin{remark}
    The generating set $S$ given in \cref{prop:width} may not be the one that would be produced by \cref{cons:rev-rubin}, however, in light of \cref{prop:all-same-size} this does not affect the computation of the width.
\end{remark}

\begin{example}\label{ex:s5-width}
    Consider $G= S_5$. Then the following is a list of (conjugacy classes of) meet-irreducible subgroups of $S_5$:
    \[
        [F_5] \cong [C_5 \rtimes C_4], \, A_5, \, [S_4], \, [D_6], \, [D_4], \, [C_6], \, [C_5]. 
    \]
    As such we conclude that $ \w (S_5) = 7$. We can visualize the generating set $S$ provided in \cref{prop:width} on the poset $\Sub(G)/G$ as in \cref{fig:s51}. We see, for example, that the element $(C_4, S_5)$ can be obtained by taking the pullback-closure of $(F_5,S_5)$ and $(D_4,S_5)$.
    
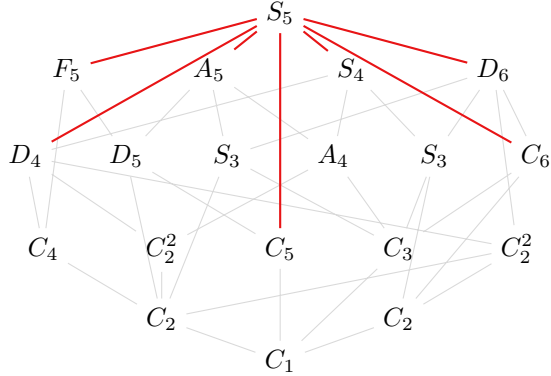
\begin{figure}[h]
\centering
  \begin{tikzpicture}[xscale=0.9, scale=0.7]
  \node at (5.5,0) (1) {$C_1$};
  \node at (8,0.803) (2) {$C_2$};
  \node at (3,0.803) (3) {$C_2$};
  \node at (8,2.14) (4) {$C_3$};
  \node at (5.5,2.14) (5) {$C_5$};
  \node at (3,2.14) (6) {$C_2^2$};
  \node at (10.5,2.14) (7) {$C_2^2$};
  \node at (0.5,2.14) (8) {$C_4$};
  \node at (10.9,3.89) (9) {$C_6$};
  \node at (8.75,3.89) (10) {$S_3$};
  \node at (4.38,3.89) (11) {$S_3$};
  \node at (2.25,3.89) (12) {$D_5$};
  \node at (0.125,3.89) (13) {$D_4$};
  \node at (6.62,3.89) (14) {$A_4$};
  \node at (10,5.54) (15) {$D_6$};
  \node at (1,5.54) (16) {$F_5$};
  \node at (7,5.54) (17) {$S_4$};
  \node at (4,5.54) (18) {$A_5$};
  \node at (5.5,6.62) (19) {$S_5$};
  \draw[thin, black!15] (1)--(2) (1)--(3) (1)--(4) (1)--(5) (3)--(6) (2)--(7) (3)--(7)
     (3)--(8) (2)--(9) (4)--(9) (2)--(10) (4)--(10) (3)--(11) (4)--(11)
     (3)--(12) (5)--(12) (6)--(13) (7)--(13) (8)--(13) (4)--(14) (6)--(14)
     (7)--(15) (9)--(15) (10)--(15) (11)--(15) (12)--(16) (8)--(16) (13)--(17)
     (14)--(17) (10)--(17) (12)--(18) (14)--(18) (11)--(18) (15)--(19) (16)--(19)
     (17)--(19) (18)--(19);
    \draw[rainbow-red,  thick] (18) -- (19);
    \draw[rainbow-red,  thick] (17) -- (19);
    \draw[rainbow-red,  thick] (16) -- (19);
    \draw[rainbow-red,  thick] (15) -- (19);
    \draw[rainbow-red,  thick] (13) -- (19);
    \draw[rainbow-red,  thick] (9) -- (19);
    \draw[rainbow-red,  thick] (5) -- (19);
    \end{tikzpicture}
    \caption{A minimal generating set for the complete transfer system for $S_5$.}\label{fig:s51}
\end{figure}

To obtain from this the generating set which would be obtained via \cref{cons:rev-rubin}, one needs to consider removing elements which are better obtained via composites. For example, the element $(D_4,S_5)$ would be replaced by $(D_4,S_4)$ as it can constructed as the composite of $(D_4,S_4)$ and $(S_4,S_5)$.
\end{example}

\begin{remark}\label{rem:F8spicy}
    The width of $G$ is measured as conjugacy classes of meet-irreducible subgroups, which is equivalent to conjugacy classes of meet-irreducible elements of $\Sub(G)$. Recall that for a lattice the meet-irreducible elements are in bijection with those elements which are covered by exactly one element. Although $\Sub(G)/G$ need not be a lattice, one may ask if the width of $G$ can be obtained by considering those elements of $\Sub(G)/G$ which are covered by singletons. 
    
    This is not the case: Consider the group $F_8 = C_2^3 \rtimes C_7$. Then $ \w (F_8) = 3$ as can be seen from \cref{fig:f8subnoconj}. Indeed, the conjugacy classes of meet-irreducible subgroups are $C_2^3$, $[C_7]$, and $[C_2^2]$. However, in $\Sub(F_8)/F_8$ (\cref{fig:f8sub}) one observes that there are 4 elements with one cover as all copies of $C_2^2$ and $C_2$ are conjugate. This raises the question of whether there is a characterization of those groups for which this issue does not arise, similar to the study undertaken in \cite{BMO_lift}. We return to this question in more detail in \cref{subsec:lifting}.
\begin{figure}[h]
\centering
\begin{minipage}{.45\textwidth}
  \centering
  \begin{tikzpicture}[yscale=0.85, xscale=0.9]
  \node[circle] at (1,0) (1) {$C_1$};
  \node at (0.9,0.81) (21) {$C_2$};
  \node at (1.5,0.81) (22) {$C_2$};
  \node at (2.1,0.81) (23) {$C_2$};
  \node at (2.7,0.81) (24) {$C_2$};
  \node at (3.3,0.81) (25) {$C_2$};
  \node at (3.9,0.81) (26) {$C_2$};
  \node at (4.5,0.81) (27) {$C_2$};
  \node at (0.125,1.61) (31) {$C_7$};
  \node at (0.125 - 0.5,1.61) (32) {$C_7$};
  \node at (0.125 - 1,1.61) (33) {$C_7$};
  \node at (0.125 - 1.5,1.61) (34) {$C_7$};
  \node at (0.125 - 2,1.61) (35) {$C_7$};
  \node at (0.125 - 2.5,1.61) (36) {$C_7$};
  \node at (0.125 - 3,1.61) (37) {$C_7$};
  \node at (0.125 - 3.5,1.61) (38) {$C_7$};
  \node at (0.9,2) (41) {$C_2^2$}; %Need 7
  \node at (1.5,2) (42) {$C_2^2$}; %Need 7
  \node at (2.1,2) (43) {$C_2^2$}; %Need 7
  \node at (2.7,2) (44) {$C_2^2$}; %Need 7
  \node at (3.3,2) (45) {$C_2^2$}; %Need 7
  \node at (3.9,2) (46) {$C_2^2$}; %Need 7
  \node at (4.5,2) (47) {$C_2^2$}; %Need 7
  \node[circle] at (2.5,2.8) (5) {$C_2^3$}; % Need 7
  \node[circle] at (1,3.21) (6) {$F_8$};
  \draw[thin, black!55] (5)--(6);
  \draw[thin, black!55] (1)--(31) (1)--(32) (1)--(33) (1)--(34) (1)--(35) (1)--(36) (1)--(37) (1)--(38);
  \draw[thin, black!55] (6)--(31) (6)--(32) (6)--(33) (6)--(34) (6)--(35) (6)--(36) (6)--(37) (6)--(38);
  \draw[thin, black!55] (21)--(41) (21)--(42) (21)--(46);
  \draw[thin, black!55] (22)--(42) (22)--(43) (22)--(47);
  \draw[thin, black!55] (23)--(43) (23)--(44) (23)--(41);
  \draw[thin, black!55] (24)--(44) (24)--(45) (24)--(42);
  \draw[thin, black!55] (25)--(45) (25)--(46) (25)--(43);
  \draw[thin, black!55] (26)--(46) (26)--(47) (26)--(44);
  \draw[thin, black!55] (27)--(47) (27)--(41) (27)--(45);
  \draw[thin, black!55] (5)--(41) (5)--(42) (5)--(43) (5)--(44) (5)--(45) (5)--(46) (5)--(47);
  \draw[thin, black!55] (1)--(21) (1)--(22) (1)--(23) (1)--(24) (1)--(25) (1)--(26) (1)--(27);
\end{tikzpicture}
  \captionof{figure}{$\Sub(F_8)$}\label{fig:f8subnoconj}
  \label{fig:test1}
\end{minipage}
\begin{minipage}{.5\textwidth}
  \centering
      \begin{tikzpicture}[yscale=0.85,xscale=0.9]
  \node at (1,0) (1) {$C_1$};
  \node at (1.5,0.81) (2) {$[C_2]$};
  \node at (-0.125,1.61) (3) {$[C_7]$};
  \node at (1.5,2) (4) {$[C_2^2]$};
  \node[circle] at (2.5,2.8) (5) {$C_2^3$};
  \node[circle] at (1,3.21) (6) {$F_8$};
  \draw[thin, black!55] (1)--(2) (1)--(3) (2)--(4) (4)--(5) (3)--(6) (5)--(6);
\end{tikzpicture}
  \captionof{figure}{$\Sub(F_8)/F_8$}\label{fig:f8sub}
  \label{fig:test2}
\end{minipage}
\end{figure}
\end{remark}

\begin{remark}
    Although we have an intrinsic description of the width of a group, it does not follow that this invariant is always easy to compute. For example we do not even have a complete description of the number of maximal subgroups of $S_n$ up to conjugacy, only asymptotic behavior, and this relies on the Classification of Finite Simple Groups and the O'Nan--Scott Theorem \cite{max_subgroups}. We would expect the calculation of the number of conjugacy classes of meet-irreducible subgroups to be at least as hard as this calculation. Dave Benson kindly computed the first few terms of the resulting sequence as $1,2,5,7,14,23,41,57,101,139, \dots$. This sequence does not appear in the OEIS.
\end{remark}

\clearpage

\part{Examples of generation combinatorics}\label{part:combi}

In \cref{part:theory} we introduced the concept of minimal generation for a transfer system. Of particular note was the definition of the width and complexity of a group. In this part, we will investigate the combinatorics associated to minimal generation for several families of groups.

\section{Complexity of cyclic groups of square-free order}\label{sec:subes}

In this section we will consider the generation combinatorics of square-free cyclic groups, that is, groups of the form $G = C_{p_1 \cdots p_n}$ for $p_i$ distinct primes. In this case, $\Sub(G)$ is isomorphic to the Boolean lattice $[1]^n$. 

\begin{lemma}
    Let $G = C_{p_1 \cdots p_n}$. Then $ \w (G) = n$.
\end{lemma}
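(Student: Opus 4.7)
The plan is to apply \cref{prop:width}, which identifies $\w(G)$ with the number of conjugacy classes of meet-irreducible subgroups of $G$. Since $G = C_{p_1 \cdots p_n}$ is abelian, conjugation is trivial and conjugacy classes of subgroups are just individual subgroups, so it suffices to count the meet-irreducible elements of $\Sub(G)$ itself (proper subgroups, since the arrow $(G,G)$ is an identity and excluded from minimal generating sets).

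Next I would use the identification $\Sub(G) \cong [1]^n$ already noted just before the statement: subgroups correspond to subsets $A \subseteq \{p_1,\dots,p_n\}$ via $A \mapsto \prod_{p \in A} C_p$, with meet given by intersection of subsets. In this Boolean lattice, an element $A \subsetneq \{p_1,\dots,p_n\}$ is covered by precisely the sets $A \cup \{p_i\}$ for $p_i \notin A$. Therefore:
\begin{itemize}
    \item If $|A| \leqslant n-2$, there exist distinct $p_i, p_j \notin A$, and $A = (A \cup \{p_i\}) \cap (A \cup \{p_j\})$ exhibits $A$ as a meet of two strictly larger elements, so $A$ is not meet-irreducible.
    \item If $|A| = n-1$, then $A$ has a unique strict upper bound (namely the top), so $A$ cannot be written as a meet of two distinct strictly larger subgroups and is meet-irreducible.
\end{itemize}

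This shows that the proper meet-irreducible subgroups of $G$ are exactly the $n$ maximal subgroups $C_{p_1 \cdots \widehat{p_i} \cdots p_n}$ for $i = 1,\dots, n$, obtained by deleting one prime factor. Combining with \cref{prop:width} yields $\w(G) = n$. No step here looks like a real obstacle; the only subtlety is keeping track of the convention that the top element $G$ is not counted as a meet-irreducible generator (it corresponds to the identity arrow $(G,G)$), which is consistent with the setup in \cref{prop:width}.
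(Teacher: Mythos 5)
Your proof is correct and follows the same route as the paper: both identify $\Sub(G)$ with the Boolean lattice, observe that the meet-irreducible elements are exactly the $n$ maximal (coatom) subgroups $C_{p_1\cdots\widehat{p_i}\cdots p_n}$, and conclude via \cref{prop:width}. You simply spell out the argument for why smaller subsets fail to be meet-irreducible, which the paper leaves implicit.
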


\begin{proof}
Write $C_{p_1 \cdots \widehat{p_i} \cdots p_n}$ for the unique subgroup of $G$ which is of index $p_i$. Then the meet-irreducible subgroups of $G$ are simply the maximal subgroups which are those of the form $C_{p_1 \cdots \widehat{p_i} \cdots p_n}$. There are exactly $n$ of these; we conclude using \cref{prop:width}.
\end{proof}

\subsection{Symmetric minimal generating sets}\label{rem:symm}

We now move to the far more lofty goal of describing the complexity of $G$. To prove this we will appeal to the theory of rainbows as introduced in \cref{sec:rainbows}. The proof will proceed by constructing the largest possible partial rainbow in the sense of \cref{definition-partialrainbow}.

Before we continue, however, it is worth highlighting the role that rainbows play in this setting. Any transfer system for $G$ has a natural action of $S_n$ which permutes the prime factors in the subgroups. We shall say that a transfer system has a \emph{symmetric minimal generating set} if it has a generating set which is fixed under this action. It is clear to see that such symmetric sets correspond exactly to partial rainbows in this setting. 

The authors are conflicted as to whether or not any transfer system which realizes the complexity for $G$ must have a symmetric minimal generating set and thus be represented by a partial rainbow; it is true in every computable case, although we have not been able to construct a proof of this. As such, our main theorem of this section results in a lower bound for the complexity of $G$, and, if one could prove the aforementioned statement regarding a symmetric generating set, then this inequality is in fact an equality.

We reiterate however, that the combinatorics required to compute the size of the maximal rainbow seem to be where the true complexity of this calculation lies, and this is what is presented in \cref{subsec:maxrainbow}.

\subsection{Maximally sized rainbows}\label{subsec:maxrainbow}

In light of \cref{rem:symm} we will proceed by constructing the maximally sized rainbow for $G$.  

Let $G=C_{p_1p_2\cdots p_n}$. The following arc, denoted $\underline{1} \to \underline{n-1}$, is the rainbow which represents all possible arrows between the subgroups of the form $C_{p_i}$ and the subgroups of the form $C_{p_1\cdots \hat{p_i}\cdots p_n}$, i.e. subgroups with order the product of $n-1$ primes. 
\[
\begin{tikzcd}
    0&1\ar[rrr,bend left,no head]&2&\cdots&n-1&n
\end{tikzcd}
\]
More generally, the following arc, denoted $\underline{x} \to \underline{y}$, represents all possible arrows between the subgroups generated by $x$ primes and the subgroups generated by $y$ primes. Each arc represents $\binom{n}{x}\times\binom{n-x}{y-x}$ arrows; this number is also known as the trinomial coefficient $\binom{n}{x,y-x,n-y}$.

\[
\begin{tikzcd}
    0&\cdots&x\ar[rr,bend left,no head]&\cdots&y&\cdots&n
\end{tikzcd}
\]

A rainbow is a set of such arcs, $R= \{\alpha_i: \ul{x_i} \to \ul{y_i}\}_{0 \leq i \leq m}$, where $x_0<x_1<\cdots<x_m<y_m<\cdots<y_1<y_0$. The number of arrows in the minimal generating set represented by this rainbow is given by
\[
\sum_{i=0}^m|\alpha_i|=\sum_{i=0}^m\binom{n}{x_i,y_i-x_i,n-y_i}
\]

We refer to this number as the \emph{size} of a rainbow and phrases like ``largest rainbow'' are always consistent with this notion of size.

\begin{definition} \label{def:maxRb}\leavevmode
For odd $n$, let the \emph{complete rainbow} be the rainbow where all classes from $\ul{0}$ to $\ul{n}$ appear as endpoints of arcs in the rainbow.

For even $n$ and $0 \leqslant x \leqslant n$, let $R[x]$ be the rainbow with $\frac{n}{2}$ arcs containing all classes but $\underline{x}$ as arc endpoints.
\end{definition}

We provide examples of these rainbows in \cref{fig:exampleofrainbows}; our main goal in this subsection is to show that these rainbows are the largest---i.e. correspond to the greatest number of arrows in $\Sub(G)$---among all other rainbows.

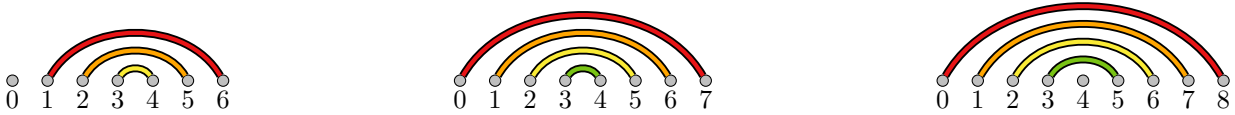
\begin{figure}[h!]
\centering
\begin{tikzpicture}[scale=0.7]
\foreach \i in {0,...,6}
{
    \node[fill=black!25,circle,draw,inner sep = 0pt, outer sep = 0pt, minimum size=1.5mm] (\i) at (\i/1.5,0) {};
    \node at (\i/1.5,-0.35) {\i};
}
\draw[-, line width=3pt, black] (1) edge[bend left=60] (6);
\draw[-, line width=3pt, black] (2) edge[bend left=60] (5);
\draw[-, line width=3pt, black] (3) edge[bend left=60] (4);
\draw[-, ultra thick, rainbow-red] (1) edge[bend left=60] (6);
\draw[-, ultra thick, rainbow-orange] (2) edge[bend left=60] (5);
\draw[-, ultra thick, rainbow-yellow] (3) edge[bend left=60] (4);
\end{tikzpicture}
\hfill 
\begin{tikzpicture}[scale=0.7]
\foreach \i in {0,...,7}
{
    \node[fill=black!25,circle,draw,inner sep = 0pt, outer sep = 0pt, minimum size=1.5mm] (\i) at (\i/1.5,0) {};
    \node at (\i/1.5,-0.35) {\i};
}
\draw[-, line width=3pt, black] (0) edge[bend left=60] (7);
\draw[-, line width=3pt, black] (1) edge[bend left=60] (6);
\draw[-, line width=3pt, black] (2) edge[bend left=60] (5);
\draw[-, line width=3pt, black] (3) edge[bend left=60] (4);
\draw[-, ultra thick, rainbow-red] (0) edge[bend left=60] (7);
\draw[-, ultra thick, rainbow-orange] (1) edge[bend left=60] (6);
\draw[-, ultra thick, rainbow-yellow] (2) edge[bend left=60] (5);
\draw[-, ultra thick, rainbow-green] (3) edge[bend left=60] (4);
\end{tikzpicture}
\hfill 
\begin{tikzpicture}[scale=0.7]
\foreach \i in {0,...,8}
{
    \node[fill=black!25,circle,draw,inner sep = 0pt, outer sep = 0pt, minimum size=1.5mm] (\i) at (\i/1.5,0) {};
    \node at (\i/1.5,-0.35) {\i};
}
\draw[-, line width=3pt, black] (0) edge[bend left=60] (8);
\draw[-, line width=3pt, black] (1) edge[bend left=60] (7);
\draw[-, line width=3pt, black] (2) edge[bend left=60] (6);
\draw[-, line width=3pt, black] (3) edge[bend left=60] (5);
\draw[-, ultra thick, rainbow-red] (0) edge[bend left=60] (8);
\draw[-, ultra thick, rainbow-orange] (1) edge[bend left=60] (7);
\draw[-, ultra thick, rainbow-yellow] (2) edge[bend left=60] (6);
\draw[-, ultra thick, rainbow-green] (3) edge[bend left=60] (5);
\end{tikzpicture}
\caption{Rainbows whose corresponding minimal generating sets realize the complexity: $R[0]$ for $n=6$, the complete rainbow for $n=7$, and $R[4]$ for $n=8$.}\label{fig:exampleofrainbows}
\end{figure}

\vspace{-5mm}

\begin{definition}
    We call a rainbow $R$ \emph{composable} if there exists a rainbow $R \cup \{ \alpha \}$ for some arc $\alpha$. We say that $R \cup \{ \alpha \}$ is the \emph{composition} of $R$ with $\alpha$.
\end{definition}

We shall now construct a sequence of operations from any rainbow with fewer than $\lvert \frac{n}{2} \rvert$ to a composable rainbow, showing the largest rainbow must have the maximum number of arcs.  Some of these operations will be performed on single arcs, but it will also be necessary to perform operations on {blocks} of arcs. 

\begin{definition}
    A \emph{block} of arcs within a rainbow $R=\{\ul{x_i} \to \ul{y_i}\}_{0 \leqslant i \leqslant m} $ is a consecutive subset of the arcs of $R$, i.e., a set of the form  $B= \{ \ul{x_i} \to \ul{y_i} \}_{I \leqslant i \leqslant J}$. Note $x_i \geqslant x_I + (i-I)$ and $y_i \leqslant y_I - (i-I)$ for all $I \leqslant i \leqslant J$.

    We say a class $\ul{x}$ is \emph{available} if no arc in $R$ has $\ul{x}$ as an endpoint.
    A block $B$ is \emph{full on the left} if $x_i = x_I + (i-I)$ for all $I \leqslant i \leqslant J$ (that is, no classes between $\ul{x_I}$ and $\ul{x_J}$ are available).

    Likewise, $B$ is \emph{full on the right} if $y_i = y_I - (i-I)$ for all $I \leqslant i \leqslant J$ (no class between $\ul{y_J}$ and $\ul{y_I}$ is available) and $B$ is \emph{full} if it is full on both the left and the right. 

We define the following important blocks for a given rainbow $R$:
\begin{itemize}
    \item the \emph{left block} of $R$, denoted $B_{\ell}(R)$ or $B_\ell$ when $R$ is clear from context, is the largest block that contains the outermost arc, $\ul{x_0} \to \ul{y_0}$, and is full on the left,
    \item the \emph{right block} of $R$, denoted $B_{r}(R)$ or $B_r$, is the largest block that contains the outermost arc and is full on the right, 
    \item the \emph{outer block} of $R$, denoted $B_{\mathcal{O}}(R)$ or $B_{\mathcal{O}}$, is $B_\ell \cap B_r$.
\end{itemize}
An example of these three blocks is depicted in \cref{fig:blocks}.
\end{definition}

\begin{figure}[h!]
\centering
\begin{tikzpicture}[scale=0.65]
\foreach \i in {0,...,12}
{
    \node[fill=black!25,circle,draw,inner sep = 0pt, outer sep = 0pt, minimum size=1.5mm] (\i) at (\i/1.5,0) {};
    \node at (\i/1.5,-0.35) {\i};
}
\draw[-, line width=3pt, black] (0) edge[bend left=60] (12);
\draw[-, line width=3pt, black] (1) edge[bend left=60] (11);
\draw[-, line width=3pt, black,dashed] (3) edge[bend left=60] (10);
\draw[-, line width=3pt, black,dashed] (5) edge[bend left=60] (9);
\draw[-, line width=3pt, black] (6) edge[bend left=60] (7);
\draw[-, ultra thick, rainbow-red] (0) edge[bend left=60] (12);
\draw[-, ultra thick, rainbow-red] (1) edge[bend left=60] (11);
\draw[-, ultra thick, rainbow-yellow,dashed] (3) edge[bend left=60] (10);
\draw[-, ultra thick, rainbow-yellow,dashed] (5) edge[bend left=60] (9);
\draw[-, ultra thick, rainbow-blue] (6) edge[bend left=60] (7);
\end{tikzpicture}
\caption{The block $B_\ell$ is the two outer arcs (solid red).  The block $B_r$ is the four outer arcs (solid red and dashed yellow). In this example, $\ul{a+1} = \ul{2}$ is available, so $B_\mathcal{O} = B_\ell$.}\label{fig:blocks}
\end{figure}
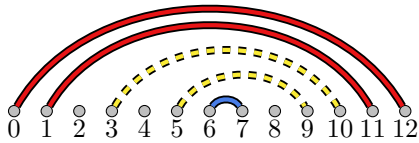

\begin{proposition}
    For any rainbow $R$, the outer block $B_\mathcal{O}$ is equal to $B_\ell$ or $B_r$.
\end{proposition}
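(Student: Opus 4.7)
The plan is to reduce this to the simple combinatorial observation that any block containing the outermost arc must be an initial segment of the indexing, after which intersection of two initial segments is automatic.

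First I would unpack the definitions. A block is a consecutive subset $\{\underline{x_i}\to\underline{y_i}\}_{I\leqslant i\leqslant J}$ of the arcs of $R$, and arcs are indexed from $0$, with index $0$ being the outermost. Hence any block that contains the outermost arc must have $I=0$, so it has the form $\{\underline{x_i}\to\underline{y_i}\}_{0\leqslant i\leqslant J}$ for some $J$. This applies equally to $B_\ell$ and $B_r$, since both are by definition blocks containing the outermost arc.

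Next I would verify that $B_\ell$ (resp.\ $B_r$) has a well-defined maximal value of $J$. The condition of being full on the left says $x_i=x_0+i$ for $0\leqslant i\leqslant J$, which is preserved under passing to any sub-initial-segment; the same holds on the right with $y_i=y_0-i$. Thus the set of valid $J$ in each case is a nonempty (it contains $0$), downward-closed subset of $\{0,\ldots,m\}$, and so has a maximum. Write $B_\ell=\{\underline{x_i}\to\underline{y_i}\}_{0\leqslant i\leqslant L}$ and $B_r=\{\underline{x_i}\to\underline{y_i}\}_{0\leqslant i\leqslant R}$.

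Finally, since both blocks are initial segments beginning at index $0$, their intersection is simply the shorter one:
\[
B_{\mathcal O}=B_\ell\cap B_r=\{\underline{x_i}\to\underline{y_i}\}_{0\leqslant i\leqslant\min(L,R)},
\]
which equals $B_\ell$ if $L\leqslant R$ and $B_r$ if $R\leqslant L$. There is no real obstacle here; the content of the proof is entirely in recognizing that both the ``full on the left'' and ``full on the right'' conditions, once combined with containment of the outermost arc, force an initial-segment description, so that intersection of two such segments cannot produce anything genuinely new.
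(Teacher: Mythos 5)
Your proof is correct, and it takes a genuinely different and simpler route than the paper's. You isolate the key structural fact: a block is by definition a \emph{consecutive} set of arcs, so any block containing the outermost arc (index $0$) must be an initial segment $\{\ul{x_i}\to\ul{y_i}\}_{0\leqslant i\leqslant J}$; both $B_\ell$ and $B_r$ are of this form, so their intersection $B_\mathcal{O}$ is trivially the shorter of the two. The paper instead argues via the innermost arc $\ul{a}\to\ul{b}$ of $B_\mathcal{O}$: it splits into the degenerate case $a+1\geqslant b-1$ (in which $R$ has no further arcs, so all three blocks coincide) and the case $a+1<b-1$, where it shows $\ul{a+1}$ and $\ul{b-1}$ cannot both be unavailable, and then uses the availability of $\ul{a+1}$ (resp.\ $\ul{b-1}$) to conclude $B_\mathcal{O}=B_\ell$ (resp.\ $B_\mathcal{O}=B_r$). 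The paper's longer argument has a side benefit: it records which of $\ul{a+1}$, $\ul{b-1}$ is available, a fact used later in verifying the algorithm of \cref{fig:alg}. Your proof is cleaner for the proposition as stated but leaves that availability observation to be made separately.
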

\begin{proof}
    Let $\ul{a} \to \ul{b}$ be the innermost arc of $B_{\mathcal{O}}$.   If $a+1 \geqslant b-1$, there cannot be any arc $\ul{x} \to \ul{y}$ with $a < x < y <b$.  So, all arcs of $R$ are included in $B_\mathcal{O}$ and thus $B_\mathcal{O} = B_\ell = B_r$.

    If $a+1 < b-1$, then the classes $\ul{a+1}$ and $\ul{b-1}$ cannot both be unavailable, or else $\ul{a+1} \to \ul{b-1}$ would be included in $B_{\mathcal{O}}$. Thus $\ul{a+1}$ is available, which implies $B_{\mathcal{O}} = B_\ell$ or $\ul{b-1}$ is available, so that $B_{\mathcal{O}} = B_r$. 
\end{proof}

We are now ready to describe the operations that will be needed to transform a given rainbow into a rainbow with the maximum number of arcs. 
\begin{proposition}\label{prop:rainbow_ops}
    The following operations on rainbows are non-decreasing on the size of a rainbow: 

Operations on single arcs: 
\begin{enumerate}
    \item Composition (i.e. union) of disjoint arcs [strictly increasing],
    \item Translation of single arcs towards the center,
    \item Contraction of large single arcs towards the center,
    \item Expansion of small single arcs away from the center.
\end{enumerate}

Operations on blocks:
\begin{enumerate}[resume]
    \item Reflection of a rainbow,
    \item Translation of a block of arcs towards the center,
    \item Contraction of the left side of an outer block of arcs towards the center,
    \item Expansion of the left side of an inner block of arcs away from the center.
\end{enumerate}

\end{proposition}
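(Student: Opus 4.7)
The plan is to verify each of the eight operations individually by computing the net change in the size
\[
\sum_i \binom{n}{x_i,\, y_i - x_i,\, n - y_i}
\]
and showing it is non-negative. Since every operation modifies only a specified collection of arcs and leaves the rest of the rainbow untouched, we may restrict attention to the affected arcs. Writing $T(a, b, c) \coloneqq \binom{n}{a, b, c}$ for triples with $a + b + c = n$, every required inequality will follow from two elementary properties of $T$: the \emph{symmetry} $T(a, b, c) = T(c, b, a)$, and the fact that for fixed $b$ the function $a \mapsto T(a, b, n - a - b) = \binom{n-b}{a}$ is unimodal in $a$ and maximised at $a = (n-b)/2$.

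I would first dispatch the four single-arc operations. For (1), adding a disjoint arc contributes a strictly positive trinomial, so composition strictly increases size. For (2), translating $\ul{x} \to \ul{y}$ by one step towards the centre of $[0, n]$ replaces $(a, b, c) = (x, y - x, n - y)$ by either $(x+1, y-x, n-y-1)$ or $(x-1, y-x, n-y+1)$, keeping $b$ fixed while decreasing $|a - c|$; unimodality immediately yields the desired monotonicity. For (3), contracting a large arc takes $(a, b, c)$ to $(a+1, b-2, c+1)$, and the explicit ratio
\[
\frac{T(a+1, b-2, c+1)}{T(a, b, c)} = \frac{b(b-1)}{(a+1)(c+1)}
\]
is $\geqslant 1$ precisely when the arc is \emph{large} in the sense that $b(b-1) \geqslant (a+1)(c+1)$. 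Operation (4) is the exact reverse, and the reciprocal ratio delivers the condition for ``small''.

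Next the four block operations. (5) Reflection sends $\ul{x} \to \ul{y}$ to $\ul{n-y} \to \ul{n-x}$ and swaps $a$ with $c$ in every trinomial, preserving each summand exactly by the symmetry of $T$. (6) A block translation acts as a simultaneous one-step shift of every arc in the block; since the block sits on one side of the centre of $[0, n]$, each arc in the block individually has its $|a - c|$ decreased, so (2) applied arc-by-arc shows the total does not decrease. For (7) and (8) the fullness of the affected block on the left is essential: when we move a contiguous string of left endpoints $\ul{x_I}, \dots, \ul{x_J}$ by one step inward (for (7)) or outward (for (8)), the middle entries $b_i = y_i - x_i$ of consecutive arcs line up after the shift, and the change in the sum $\sum T$ over the block telescopes to a single comparison between two trinomials at the extremal arcs of the block. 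For (7) the extremal outer arc lies far from the centre and the comparison is favourable by unimodality; for (8) the extremal inner arc lies near the centre and the dual inequality applies.

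The main obstacle will be cases (7) and (8). Unlike the earlier operations, a block contraction or expansion can decrease some individual trinomials, so the non-negativity of the net change cannot be read off arc-by-arc. The telescoping argument must be set up carefully using the precise definition of fullness of $B_\ell$ (or of the inner block) on the left, and one must identify the single boundary arc whose trinomial controls the sign of the difference. I would package this telescoping as a separate lemma so that (7) and (8) both reduce to a single multinomial comparison via the same unimodality input used throughout the single-arc arguments.
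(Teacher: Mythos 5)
Your treatment of the single-arc operations (1)--(2) and of reflection (5) matches the paper, and your block-translation argument (6) is the paper's: translate arc-by-arc using (2). However, you have mischaracterized the single-arc contraction and expansion: in the paper these are \emph{one-sided} moves, replacing $\ul{x}\to\ul{y}$ by $\ul{x+1}\to\ul{y}$ (condition $y-x>x$) or by $\ul{x-1}\to\ul{y}$ (condition $y-x<x$), not the symmetric two-sided move $(a,b,c)\mapsto(a\pm1,b\mp2,c\pm1)$ you describe. This is cosmetic but matters because it feeds into the block operations.

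The genuine gap is in your treatment of (7). Your claim that the block inequality ``telescopes to a single comparison between two trinomials at the extremal arcs'' is not correct, and no clean telescoping is available. For the block $B_\mathcal{O}=\{\ul{i}\to\ul{n-1-i}\}_{0\leqslant i\leqslant I}$, contracting on the left replaces $\binom{n}{i,\,n-2i-1,\,i+1}$ by $\binom{n}{i+1,\,n-2i-2,\,i+1}$, and the ratio is $\tfrac{n-2i-1}{i+1}$, which drops below $1$ exactly when $i\geqslant\tfrac{n-2}{3}$. So for blocks with $I\geqslant\tfrac{n-1}{3}$, the inner terms \emph{decrease} and the outer terms increase; whether the net change is nonnegative is a genuine trade-off, not a boundary comparison. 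The paper resolves it via \cref{lem:ineqSmall}, which in turn rests on \cref{lem:ineq}: the difference of the two sums equals $\gamma_n-\binom{n}{n/2}$ (a Riordan number minus a central binomial coefficient) when $n$ is even, and positivity for $n\geqslant 8$ requires an inductive comparison of the growth rates of $\gamma_n$ and $\binom{n}{n/2}$ using the Riordan recurrence $\gamma_{n+2}=\tfrac{n+1}{n+3}(2\gamma_{n+1}+3\gamma_n)$. This is precisely the content that produces the change of behavior at $n=8$ highlighted in the introduction and \cref{rem:luo}; a proof sketch that bypasses it has missed the main technical point. By contrast (8) really is easy: the paper checks that every arc in $B_r\setminus B_\ell$ individually satisfies $y_i-x_i<x_i$, so no aggregate inequality is needed there.
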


\renewcommand*{\proofname}{Proof of \cref{prop:rainbow_ops} (1)--(4).}
\begin{proof}
First let us see that composition strictly increases the size of a rainbow.  

\begin{enumerate}
        \item \textbf{Composition:}  Consider any rainbow $R = \{ \alpha_i \}_{i=0}^m$.  Suppose $\ul{w}$ and $\ul{z}$ are available in $R$, with $w < z$.  If $x_m < w < z < y_m$, or $w < x_0 <y_0 < z$, or there exists some $j < m$ such that $x_j < w < x_{j+1}$ and $y_{j+1} < z < y_i$, then the composition, or union, of $R$ and the arc $ \ul{w} \to \ul{z}$ is a valid rainbow. 
    
        Using the notation $\widetilde{R}=\{\ul{w} \to \ul{z}\}\cup\{\alpha_i\}_{i=0}^m$ for this composition, we then have
    \[
|\widetilde{R}|=|\ul{w} \to \ul{z}|+\sum_{i=0}^m|\alpha_i|.
    \]
        Since $\ul{w} \to \ul{z}$ must represent at least one arrow in $\Sub(G)$, we conclude $|\widetilde{R}|>|R|$.
\end{enumerate}

The conditions for performing translating, contracting, or expanding an arc $\ul{x} \to \ul{y}$ can all be written as comparisons between the length of the arc ($y-x$) and the distance from the endpoints of the range to the endpoints of the arc ($x$ and $n-y$).
One way to think about these conditions is that non-decreasing operations move towards equality between these three quantities (the length and the distance to each endpoint from the end of the range). 

\begin{enumerate}[resume]

    \item \textbf{Translation of a single arc:}
    
    Consider $\alpha: \ul{x} \to \ul{y}$, an arc in $R$, where $\ul{x+1}$ and $\ul{y+1}$ are both available.  If $x + y < n$, then we claim replacing $\alpha$ with the arc $\ul{x+1} \to \ul{y+1}$ (i.e., translating $\alpha$ to the right by one unit) is non decreasing on the size of the rainbow.  

    To see this, note that in this case, $1 \leqslant \dfrac{n-y}{x+1}$. Hence
    \begin{align*} 
        |\alpha| & = \frac{n!}{x!(y-x)!(n-y)!}
        \leqslant \frac{n!}{x!(y-x)!(n-y)!} \cdot \frac{n-y}{x+1} \\
        & = \dfrac{n!}{(x+1)!(y-x)!(n-y-1)!}=|\ul{x+1} \to \ul{y + 1}|. 
    \end{align*} 

    By a symmetric argument, replacing $\alpha$ with the arc $\ul{x-1} \to \ul{y-1}$ (i.e., translating $\alpha$ to the left by one unit) when $\ul{x-1}$ and $\ul{y-1}$ are available is non-decreasing on the number of arrows represented by the corresponding rainbow when $x+y>n$.
    
    Note that it's impossible to satisfy both conditions at the same time.  The arcs that satisfy neither condition are arcs of the form $\underline{x} \to \underline{n-x}$.  When $n$ is not equal to 2, 4, or 6, we will see that these non-translatable arcs are precisely the arcs in the largest rainbow. \\

    \item \textbf{Contracting an arc:} Let $\alpha: \ul{x} \to \ul{y}$ be an arc of $R$ where $\ul{x+1}$ is available.   If $y - x > x$, then $\dfrac{y-x}{x+1} \geqslant 1$. Hence 
    \begin{align*}
        |\alpha| & = \dfrac{n!}{x!(y-x)!(n-y)!} \leqslant \dfrac{n!}{x!(y-x)!(n-y)!} \cdot \dfrac{y-x}{x+1} \\
        & =  \dfrac{n!}{(x+1)!(y-x-1)!(n-y)!} = |\ul{x+1} \to \ul{y}|.
    \end{align*}
    Replacing $\alpha$ with the arc $\ul{x+1} \to \ul{y}$ (i.e., contracting $\alpha$ towards the center on the left by one unit) is therefore non-decreasing on the number of arrows represented by $R$ when $y-x > x$.

    Likewise, replacing $\alpha$ with the arc $\ul{x} \to \ul{y-1}$ (i.e., contracting $\alpha$ towards the center on the right by one unit) when $\ul{y-1}$ is available is non-decreasing on the number of arrows represented by $R$ when $y-x > n-y$.
    
    \item \textbf{Expanding an arc:}
    
    Let $\alpha: \ul{x} \to \ul{y}$ be an arc of $R$ where $x-1$ is available.   If $y - x < x$, then $\dfrac{x}{y-x+1} \geqslant 1$. Hence 
    \begin{align*}
        |\alpha| & = \dfrac{n!}{x!(y-x)!(n-y)!} \leqslant \dfrac{n!}{x!(y-x)!(n-y)!} \cdot \dfrac{x}{y-x+1} \\
        & = \dfrac{n!}{(x-1)!(y-x+1)!(n-y)!} = |\ul{x-1} \to \ul{y}|.
    \end{align*}
    Replacing $\alpha$ with the arc $\ul{x-1} \to \ul{y}$ (i.e., expanding $\alpha$ away from the center on the left by one unit) is therefore non-decreasing on the number of arrows represented by $R$ when $y-x < x$.

    Likewise, replacing $\alpha$ with the arc $\ul{x} \to \ul{y-1}$ (i.e., expanding $\alpha$ the right by one unit) when $\ul{y+1}$ is available is non-decreasing on the number of arrows represented by $R$ when $y-x < n-y$.
    
        \end{enumerate}
\end{proof}

To summarize \cref{prop:rainbow_ops} (1)-(4), the operations given in \cref{table:singleArcTable} can be performed on $\ul{x} \to \ul{y}$ whenever the new arc's endpoints are available in the original rainbow and are non-decreasing on the corresponding number of arrows when the given condition is met.

\begin{table}[h]
    \centering
    \renewcommand{\arraystretch}{1.5}
\begin{tabular}{|l|c|c|} \hline 
    Operation name & Replace $\underline{x}\to \underline{y}$ with & Condition  \\ \hline \hline 
    Translate right & $\underline{x+1}\to \underline{y+1}$ & $x+y < n$ \\ \hline 
    Translate left & $\underline{x-1}\to \underline{y-1}$ & $ x+y > n$ \\ \hline 
    Contract right & $\underline{x}\to \underline{y-1}$ & $y-x > n-y$ \\ \hline 
    Contract left & $\underline{x+1}\to \underline{y}$ &  $y-x > x$\\ \hline 
    Expand right & $\underline{x}\to \underline{y+1}$ & $y-x < n-y$ \\ \hline 
    Expand left & $\underline{x-1}\to \underline{y}$ & $y-x < x$ \\ \hline 
\end{tabular}
    \caption{A summary of the operations on single arcs and the conditions under which the operations are non-decreasing. }
    \label{table:singleArcTable}
\end{table}

In some cases, it will be necessary or simply more efficient to simultaneously apply an operation to all arcs in a particular block; this is the subject of \cref{prop:rainbow_ops} (5)-(8). In order to prove some of these operations are non-decreasing on the corresponding number of arrows, we will need the following combinatorial lemmas.

\begin{lemma} \label{lem:ineq}
    For all odd $n \geqslant 3$ and all even $n \geqslant 8$,  
    \[ \sum_{i=0}^{ \lfloor \frac{n-1}{2} \rfloor } \binom{n}{i+1, n-2i-1, i} < \sum_{i=0}^{ \lfloor \frac{n-1}{2} \rfloor } \binom{n}{i, n-2i, i} . \]
    
\end{lemma}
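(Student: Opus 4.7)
The plan is to reformulate both sides as coefficients of the generating polynomial $(1+x+x^2)^n$ and then reduce the comparison to a single trinomial-coefficient inequality. Writing $T_n(k) \coloneqq [x^k](1+x+x^2)^n$ and substituting $(u,v,w) = (1,x,x^2)$ in $(u+v+w)^n$, one checks that the two sums in the lemma equal $T_n(n) - \varepsilon(n)$ and $T_n(n-1)$ respectively, where $\varepsilon(n) = \binom{n}{n/2}$ for even $n$ (the missing term $\binom{n}{n/2,0,n/2}$ at the top of the index range) and $\varepsilon(n) = 0$ for odd $n$. Hence the lemma is equivalent to showing $D_n \coloneqq T_n(n) - T_n(n-1) > \varepsilon(n)$.

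Next I would establish the key identity
\[ D_n = T_{n-2}(n-2) - T_{n-2}(n-5). \]
This follows by starting from $(1-x)(1+x+x^2) = 1-x^3$, applying the Pascal-type recurrence $T_n(k) = T_{n-1}(k) + T_{n-1}(k-1) + T_{n-1}(k-2)$ twice, and using the symmetry $T_m(k) = T_m(2m-k)$. For odd $n \geq 3$ the lemma then reduces to $T_{n-2}(n-2) > T_{n-2}(n-5)$, which is immediate from the strict unimodality of row $n-2$ of the trinomial triangle (the base case $n=3$ reads $T_1(1) = 1 > 0 = T_1(-2)$).

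The main obstacle is the even case, where one needs $M_{n-2} - T_{n-2}(n-5) > \binom{n}{n/2}$ with $M_m \coloneqq T_m(m)$ the central trinomial coefficient. This inequality just fails at $n = 6$ (where $15 < 20$) and only starts holding at $n = 8$ ($91 > 70$), so any bound must be sharp. My plan is to proceed by induction on even $n \geq 8$, with the base case $n = 8$ verified by direct computation. For the inductive step, one notes that $\binom{n+2}{(n+2)/2}/\binom{n}{n/2} = 4(n+1)/(n+2)$ is bounded above by $4$, whereas the asymptotics $M_m \sim 3^{m+1/2}/(2\sqrt{\pi m})$ and the identity $D_n = M_{n-2} - T_{n-2}(n-5)$ force $D_{n+2}/D_n \to 9$; I would leverage this to show that $D_n/\binom{n}{n/2}$ is strictly increasing for even $n \geq 8$, which together with the base case closes the induction. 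The hardest part will be making the ratio comparison in the inductive step sharp enough given the tiny base-case margin.
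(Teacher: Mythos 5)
Your setup is essentially the paper's: both sides are read off as coefficients of $(1+x+x^2)^n$, the difference reduces to $D_n = T_n(n) - T_n(n-1)$ (which is exactly the Riordan number $\gamma_n$, OEIS A005043, written in the paper as $\binom{n}{0}_2 - \binom{n}{1}_2$), and for even $n$ the goal becomes $D_n > \binom{n}{n/2}$. Your identity $D_n = T_{n-2}(n-2) - T_{n-2}(n-5)$ is correct and is a pleasant reformulation, and it does dispose of the odd case by unimodality of the trinomial row. (One small slip: you write that the two sums ``equal $T_n(n) - \varepsilon(n)$ and $T_n(n-1)$ respectively,'' which has them swapped --- the left-hand sum of the lemma is $T_n(n-1)$ and the right-hand sum is $T_n(n) - \varepsilon(n)$ --- but this does not affect the equivalence $D_n > \varepsilon(n)$ that you derive.)

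The genuine gap is in the even-case inductive step. You correctly compute the binomial ratio $\binom{n+2}{(n+2)/2}/\binom{n}{n/2} = 4(n+1)/(n+2) < 4$, but for the other side you only appeal to the asymptotic $D_{n+2}/D_n \to 9$ and say you ``would leverage this.'' An asymptotic statement does not by itself rule out the ratio dipping near or below $4$ at finitely many $n$, and making the estimate explicit from the central-trinomial asymptotics would require careful error control on $T_{n-2}(n-2)$, $T_{n-2}(n-5)$, and $\binom{n}{n/2}$ --- far more work than you have sketched, and not obviously sharp enough given the thin margin at $n=8$ ($D_8 = 91$ vs.\ $\binom{8}{4}=70$). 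The paper closes this step cleanly by recognizing $D_n$ as the Riordan number $\gamma_n$ and using the known recurrence $\gamma_{n+2} = \tfrac{n+1}{n+3}\bigl(2\gamma_{n+1} + 3\gamma_n\bigr)$, which together with monotonicity of $(\gamma_n)$ gives $\gamma_{n+2}/\gamma_n > 5(n+1)/(n+3) \geqslant 4$ for $n \geqslant 7$; this strictly dominates the binomial ratio and closes the induction. To complete your argument you would need an analogous concrete lower bound for $D_{n+2}/D_n$ --- either by deriving the Riordan recurrence from your $T$-identity or by a genuinely explicit (non-asymptotic) estimate, which is the piece currently missing.
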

\renewcommand*{\proofname}{Proof.}
\begin{proof}
First, consider the case where $n$ is even, so $\lfloor \frac{n-1}{2} \rfloor = \frac{n}{2} - 1$.  To prove the desired inequality, we will show the following difference is positive: 
            \begin{align*}
           \sum_{i=0}^{(n/2)-1} \binom{n}{i, n-2i, i} - \sum_{i=0}^{(n/2)-1} \binom{n}{i+1, n-2i-1, i} 
            &= \left(\sum_{i \geqslant 0} \binom{n}{i, n-2i, i} \right) - \binom{n}{\frac{n}{2}, 0, \frac{n}{2}} - \sum_{i\geqslant 0} \binom{n}{i+1, n-2i-1, i} \\
            &= \binom{n}{0}_2 - \binom{n}{\frac{n}{2}} - \binom{n}{1}_2 \\
            &= \gamma_n - \binom{n}{\frac{n}{2}}.
        \end{align*}
        Here $\binom{n}{k}_2$ is the coefficient of $t^{n+k}$ in $(1+t+t^2)^n$ and $\gamma_n$ is the $n^{th}$ Riordan number \cite[A005043]{oeis}.
        
        To show $\gamma_n - \binom{n}{\frac{n}{2}}$  is positive for all even $n \geqslant 8$, we proceed by induction.  For $n =8$, $\gamma_n = 91$ and $\binom{n}{n/2} = 70$, so $\gamma_n - \binom{n}{n/2} > 0$.  

        Now suppose $n > 8$ and $\gamma_n - \binom{n}{n/2} > 0$.  We will show that $\gamma_{n+2} - \binom{n+2}{\frac{n}{2} + 1}$ is positive by comparing the growth of the Riordan numbers to the growth of the binomial terms.

        The Riordan numbers are a strictly increasing sequence (after $\gamma_1$) that satisfy the recurrence relation $\gamma_{n+2} = \frac{n+1}{n+3} \left[  2\gamma_{n+1} + 3\gamma_n\right]$ \cite{CDY}. Thus, 
        \begin{align*}
            \frac{\gamma_{n+2}}{\gamma_n} &= \frac{\frac{n+1}{n+3} \left[  2\gamma_{n+1} + 3\gamma_n\right]}{\gamma_n}  > \frac{\frac{n+1}{n+3} \left[  2\gamma_{n} + 3\gamma_n\right]}{\gamma_n} \ = \frac{5(n+1)}{n+3}  \geqslant 4
        \end{align*}
        Note that the last inequality depends on the fact that $n \geqslant 7$.

        Now, consider the growth of the binomial terms: 

        \begin{align*}
    \dfrac{\binom{n+2}{(n+2)/2}}{\binom{n}{n/2}} & = \dfrac{\dfrac{(n+2)!}{(\frac{n}{2}+1)!(\frac{n}{2}+1)!}}{\dfrac{n!}{(n/2)!(n/2)!}} = \dfrac{(n+2)(n+1)}{(\frac{n}{2}+1)^2} = \dfrac{4n^2 +12n + 8}{n^2 +4n+4} < \dfrac{4n^2 +16n + 16}{n^2 +4n+4}  = 4 \\
\end{align*}

        So, finally,

        \[ \gamma_{n+2} - \binom{n+2}{\frac{n}{2} + 1} > 4 \gamma_n - \binom{n+2}{\frac{n}{2} + 1} > 4 \gamma_n - 4 \binom{n}{n/2} = 4 \left( \gamma_n - \binom{n}{n/2} \right) > 0.  \]

    When $n$ is odd, the difference we want to show is positive is:
    \begin{align*}
           \sum_{i=0}^{(n-1)/2} \binom{n}{i, n-2i, i} - &\sum_{i=0}^{(n-1)/2} \binom{n}{i+1, n-2i-1, i} \\
            &= \sum_{i \geqslant 0} \binom{n}{i, n-2i, i}  - \sum_{i\geqslant 0} \binom{n}{i+1, n-2i-1, i} \\
            &= \binom{n}{0}_2 - \binom{n}{1}_2  \\
            &= \gamma_n .
        \end{align*}
For all $n > 1$, $\gamma_n >0$, so the difference is positive for all odd $n \geqslant 3$.
\end{proof}

\begin{lemma} \label{lem:ineqSmall}
    For all $n \geqslant 7$ and $\dfrac{n-1}{3} \leqslant I \leqslant \dfrac{n-2}{2}$, 
    \[ \sum_{i=0}^I \binom{n}{i, n-2i-1, i+1} < \sum_{i=0}^I \binom{n}{i+1, n-2i-2, i+1}.  \]
\end{lemma}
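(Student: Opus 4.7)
The plan is to define $S(I) \defeq \mathrm{RHS} - \mathrm{LHS}$ and show $S(I) > 0$ throughout the stated range of $I$. First, I would collapse $S(I)$ into a single-sum form. Applying trinomial symmetry $\binom{n}{i,n-2i-1,i+1} = \binom{n}{i+1, n-2i-1, i}$ to the LHS gives
\[
S(I) = \sum_{k=0}^{I}\left[\binom{n}{k+1,\,n-2k-2,\,k+1} - \binom{n}{k+1,\,n-2k-1,\,k}\right].
\]
A direct ratio computation of the two trinomials shows that the $k$-th summand equals $\binom{n}{k+1,n-2k-1,k} \cdot \frac{n-3k-2}{k+1}$, which is strictly positive for $k < (n-2)/3$ and strictly negative for $k > (n-2)/3$. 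Thus $S$ is unimodal as a function of $I$, increasing up to the peak at $I = \lfloor n/3 \rfloor - 1$ (with a possible plateau when $n \equiv 2 \pmod{3}$) and strictly decreasing after.

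A short case check over $n \bmod 3$ shows that the hypothesis $I \geqslant (n-1)/3$ always forces $I \geqslant \lfloor n/3 \rfloor$, placing us strictly past the peak. Therefore $S$ is strictly decreasing on the target range, and it suffices to establish positivity at the right endpoint $I_{\max} \defeq \lfloor (n-2)/2 \rfloor$.

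Finally, I would evaluate $S(I_{\max})$ using the generating-function machinery from the proof of \cref{lem:ineq}. Recall $\binom{n}{0}_2 = \sum_{a \geqslant 0}\binom{n}{a,n-2a,a}$, $\binom{n}{1}_2 = \sum_{a \geqslant 0}\binom{n}{a+1,n-2a-1,a}$, and $\gamma_n = \binom{n}{0}_2 - \binom{n}{1}_2$. After reindexing by $j = k+1$ in the first summand of $S(I_{\max})$, both partial sums are almost the full generating-function totals, off only by a single boundary term whose value depends on parity. For even $n$ this yields $S(I_{\max}) = \gamma_n - 1$, and for odd $n$ it yields $S(I_{\max}) = \gamma_n - 1 + \binom{n}{(n-1)/2}$. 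Both are strictly positive for $n \geqslant 7$ (already $\gamma_8 = 91$ and the Riordan numbers are increasing after $\gamma_1$, while $\binom{n}{(n-1)/2} \geqslant 35$ for odd $n \geqslant 7$), completing the argument. The main technical care will go into the parity bookkeeping when identifying the boundary terms; the reduction to $I_{\max}$ via unimodality is essentially immediate from the ratio expression.
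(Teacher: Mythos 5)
Your proof is correct, and it takes a genuinely different route from the paper's. I verified the ratio computation (the $k$-th summand of $S$ is $\binom{n}{k+1,n-2k-1,k}\cdot\frac{n-3k-2}{k+1}$), the claim that the summands are $\leqslant 0$ for all $k \geqslant I+1$ once $I \geqslant (n-1)/3$ (so $S$ is non-increasing on the target range), and the endpoint evaluation: reindexing the first sum by $j=k+1$ and tracking the boundary term does give $S(I_{\max}) = \gamma_n - 1$ for even $n$ and $S(I_{\max}) = \gamma_n - 1 + \binom{n}{(n-1)/2}$ for odd $n$, both positive for $n \geqslant 7$ (e.g., $n=8$ gives $90$, $n=7$ gives $70$).

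The paper instead splits off the boundary case $I = \lfloor(n-1)/2\rfloor$ (dispatched by \cref{lem:ineq}), and for $I$ below that writes the left-hand side as a full sum minus a tail, bounds the outermost trinomial below by $1$, invokes \cref{lem:ineq} wholesale, and then does a term-by-term ratio comparison $\binom{n}{i-1,n-2i+1,i} > \binom{n}{i,n-2i,i}$ on the remaining tail. Your approach replaces all of this with a single structural observation (unimodality of $S$ in $I$, since the summands change sign exactly once) plus a closed-form evaluation at the right endpoint. This is cleaner and also buys something conceptually: at the endpoint you only need $\gamma_n > 1$ for even $n$ (trivial from $\gamma_8 = 91$) and positivity of a sum of nonnegative terms for odd $n$, whereas \cref{lem:ineq} for even $n$ requires the considerably sharper $\gamma_n > \binom{n}{n/2}$, which is the actual source of the $n \in \{2,4,6\}$ exceptions in \cref{thm:maincube}. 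So your argument shows that \cref{lem:ineqSmall} does not, in fact, depend on the hard half of \cref{lem:ineq}, which the paper's presentation obscures. (One minor nit: the Riordan numbers are only weakly increasing at the start, $\gamma_2 = \gamma_3 = 1$, but this does not affect your use of them for $n \geqslant 7$.)
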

\begin{proof}

Let $M = \lfloor \frac{n-1}{2} \rfloor$.  The only case where an $I$ satisfying the conditions of the lemma is equal to $M$ is $I = \frac{n-2}{2}$ for an even $M$. In that case, the identity is known to be true from the previous lemma so, to complete the proof, assume $I < M$. Then we may write:

\begin{align*} \sum_{i=0}^I \binom{n}{i, n-2i-1, i+1} &= \sum_{i=0}^{M} \binom{n}{i, n-2i-1, i+1} - \sum_{i=I+1}^{M-1} \binom{n}{i, n-2i-1, i+1} - \binom{n}{M, 2M-1, M+1} \\
&= \sum_{i=0}^{M} \binom{n}{i, n-2i-1, i+1} - \sum_{i=I+2}^{M} \binom{n}{i-1, n-2i+1, i} - \binom{n}{M, 2M-1, M+1}\end{align*}
Note the middle term is zero if $I = M-1$.
Since $2M +1 \leqslant n$, we have $\binom{n}{M, n-2M-1, M+1} \geqslant 1$ and thus,
    \begin{align*}
        \sum_{i=0}^I \binom{n}{i, n-2i-1, i+1}  
        &\leqslant \sum_{i=0}^{M} \binom{n}{i, n-2i-1, i+1} - \sum_{i=I+2}^{M} \binom{n}{i-1, n-2i+1, i} - 1.
    \end{align*}
Applying the previous lemma to the first sum on the right, 
    \begin{align*}
        \sum_{i=0}^I \binom{n}{i, n-2i-1, i+1} &< \sum_{i=0}^{M} \binom{n}{i, n-2i, i} - \sum_{i=I+2}^{M} \binom{n}{i-1, n-2i+1, i} - 1. \\
    \end{align*}
Using the fact that $\binom{n}{0, n, 0} = 1$, we then have
    \begin{align*}
        \sum_{i=0}^I \binom{n}{i, n-2i-1, i+1} &< 1 + \sum_{i=1}^{M} \binom{n}{i, n-2i, i} - \sum_{i=I+2}^{M} \binom{n}{i-1, n-2i+1, i} - 1 \\
        \\&=\sum_{i=1}^{M} \binom{n}{i, n-2i, i} - \sum_{i=I+2}^{M}  \binom{n}{i-1, n-2i+1, i}.
    \end{align*}
For all $i \geqslant I+2$, we have $3i-1 > n$, and thus $\dfrac{i}{n-2i+1} > 1$. Therefore, for any such $i$,
\[ \binom{n}{i-1, n-2i+1, i} = \dfrac{n}{(i-1)! (n-2i+1)!i!} = \dfrac{n!}{i! (n-2i)! i!} \cdot \dfrac{i}{n-2i+1} > \binom{n}{i, n-2i, i}.  \]

Returning to the main inequality, this implies:
\begin{align*}
        \sum_{i=0}^I \binom{n}{i, n-2i-1, i+1} & < \sum_{i=1}^{M} \binom{n}{i, n-2i, i} - \sum_{i=I+2}^{M}  \binom{n}{i-1, n-2i+1, i} \\
        &< \sum_{i=1}^{M} \binom{n}{i, n-2i, i} - \sum_{i=I+2}^{M}  \binom{n}{i, n-2i, i} \\
        &= \sum_{i=1}^{I+1} \binom{n}{i, n-2i, i} \\
        &= \sum_{i=0}^{I} \binom{n}{i+1, n-2i-2, i+1}.
    \end{align*}
\end{proof}

\renewcommand*{\proofname}{Proof of \cref{prop:rainbow_ops} (5)--(8).}
\begin{proof} \phantom{|}
    \begin{enumerate}
    \addtocounter{enumi}{4}
    \item \textbf{Reflection:} Any rainbow, $R = \{ \underline{x_i} \to \underline{y_i}\}_{i=0}^m$,  represents the same number of arrows as the rainbow, $\widetilde{R} = \{ \ul{n-y_i} \to \ul{n-x_i} \}_{i=0}^m$.  This is due to the symmetry of multinomial coefficients: 
    \[  
        |\widetilde{R}| = \sum_{i=0}^m\binom{n}{n-y_i,(n-x_i)-(n-y_i),n-(n-x_i)} = \sum_{i=0}^m\binom{n}{n-y_i,y_i-x_i,x_i}  = |R|. 
    \]
    \item \textbf{Translation for a block of arcs:}  We will only consider translations of blocks of arcs where each arc satisfies the required inequality for the translation of a single arc.  However, for each arc we require that the new endpoints are \emph{either} available in the original rainbow \emph{or} an endpoint of an arc in the same block to be translated. 

    Let $B = \{\ul{x_i} \to \ul{y_i}\}_{I \leqslant i \leqslant J}$ be a block that is full on the left. Then, for all $I \leqslant i \leqslant J$, $x_i = x_I + (i-I)$ and $y_i \leqslant y_I - (i-I)$. So, 
    \[  x_{i} + y_{i} \leqslant x_I + (i-I) + y_I - (i-I) = x_I +y_I. \]
    Thus, if $x_I + y_I < n$, then the entire block can be translated right, as long as $\ul{x_J + 1}$ and $\ul{y_I +1}$ are available in $R$.

    Likewise, if $B = \{\ul{x_i} \to \ul{y_i}\}_{I \leqslant i \leqslant J}$ is full on the right, $x_I + y_I > n$, and $\ul{x_I - 1}$ and $\ul{y_J -1}$ are available, then the entire block can be translated left.
    
    \item \textbf{Contracting a particular nested block of arcs:} Consider $B_{\mathcal{O}} = \{\ul{x_i} \to \ul{y_i}\}_{0 \leqslant i \leqslant I}$, the outer block of a rainbow, $R$, where $x_0 = 0$ and $\ul{x_I + 1}$ is available.  Then, contracting the block on the left, i.e., replacing $B_{\mathcal{O}}$ in $R$ by $B' = \{\ul{x_i+1} \to \ul{y_i}\}_{0 \leqslant i \leqslant I}$, is a non-decreasing operation when (a) $y_0 = n-1$, or (b) $y_0 = n$ and $I < \frac{n}{3}$.
        \begin{enumerate}
            \item  To prove this operation is non-decreasing when $y_0 = n-1$, we consider two subcases: $I < \frac{n-1}{3}$ and $I \geqslant \frac{n-1}{3}$.  Note that since $x_0 = 0$ and $y_0=n-1$, all arcs in $B_\mathcal{O}$ are of the form $\ul{i} \to \ul{n-i-1}$ for some $0 \leqslant i \leqslant I$.
            
            When $I<\frac{n-1}{3}$, each $\ul{i} \to \ul{n-i-1}$ satisfies the individual criterion to contract on the left since,
                    \begin{align*}
                        y-x &=( n-i-1) - i =  n - 2i-1 \\[-1mm]
                        &\geqslant n - 2I - 1 = n -\left( \frac{n-1}{3}\right)-1= \frac{n-1}{3}   \\[-1mm]
                        &> I\\[-1mm]
                        &\geqslant i = x.
                   \end{align*} 
                It remains to show that for any $\frac{n-1}{3} \leqslant I \leqslant \frac{n-2}{2}$,
                \[ |B_{\mathcal{O}}| = \sum_{i=0}^I \binom{n}{i, n-2i-1, i+1} \leqslant \sum_{i=0}^I \binom{n}{i+1, n-2i-2, i+1} = |B'|.  \]
                In the case where $n \geqslant 7$, this is exactly Lemma \ref{lem:ineqSmall}.

                The only possible integers $n$ and $I$ where $0 \leqslant n < 7$ and $\frac{n-1}{3} \leqslant I \leqslant \frac{n-2}{2}$ are the pair $n=4$ and $I=1$.  In this case, the inequality can be checked by hand. 
            \item Considering the case where $y_0 = n$ and $I < \frac{n}{3}$, we note each arc in $B_\mathcal{O}$ has the form $\ul{i} \to \ul{n-i}$ for some $0 \leqslant i \leqslant I$.
            Each such arc satisfies the individual criterion to contract on the left, since
            \[  y - x = (n-i) -i = n-2i \geqslant n-2I > n - 2\cdot \frac{n}{3} = \frac{n}{3} > I \geqslant i = x.   \]
        \end{enumerate}
        \vspace{-3mm}
    \item \textbf{Expanding a particular nested block of arcs:}  Consider a non-composable rainbow $R$, with outer block $B_{\mathcal{O}} = \{\ul{x_i} \to \ul{y_i}\}_{0 \leqslant i \leqslant I}$, such that $x_0 = 0$, $y_0 =n$, $\ul{x_I + 1}$ is available in $R$, and $I \geqslant \frac{n}{3}$.
    We claim that, in this case, expanding $B_r \setminus B_\ell$ on the left, i.e. replacing $B_r \setminus B_\ell = \{\ul{x_i} \to \ul{y_i}\}_{I < i \leqslant J}$ in $R$ by $B' = \{\ul{x_i-1} \to \ul{y_i}\}_{I < i \leqslant J}$, is a non-decreasing operation. 

    To see this, we first give expressions for $x_i$ and $y_i$ in terms of $n$ and $a$.  Since $B_r$ is unbroken on the right and $y_0 = n$, $y_i = n-i$ for all $i \leqslant J$. In this range ($I < i \leqslant J$) there is at least one unoccupied class between $\ul{x_i}$ and $\ul{0}$, so $x_i \geqslant i+1$.    Therefore,
    \begin{align*} y_i - x_i &= (n-i) - x_i \\
    & \leqslant (n-i) - (i+1) = n - 2i -1 \\
    &\leqslant n - \frac{2n}{3} - 1  = \frac{n}{3} -1 \\
    &< \frac{n}{3} \\
    & \leqslant I < i < x_i .
    \end{align*}
    Since $y_i - x_i < x_i$, the arc $\ul{x_i} \to \ul{y_i}$ can be replaced by $\ul{x_i -1} \to \ul{y_i}$ without decreasing the associated number of generators for all $I < i \leqslant J$. \qedhere
\end{enumerate}
\end{proof}

\renewcommand*{\proofname}{Proof.}

\vspace{-3mm}
We summarize the content of \cref{prop:rainbow_ops} (5)-(8) in \cref{table:blockTable}.

\begin{table}[h!]
    \centering
    \renewcommand{\arraystretch}{1.5}
    \resizebox{\linewidth}{!}{
\begin{tabular}{|c|c|c|p{5cm}|} \hline 
    Operation name & Replace & With & Conditions \\ \hline \hline
    Reflection & $R = \{\ul{x_i} \to \ul{y_i}\}_{i=0}^m$ & $ \{\ul{n-y_i} \to \ul{n-x_i}\}_{i=0}^m$ & \emph{None} \\ \hline 
    Block Translate Right & $B = \{\ul{x_i} \to \ul{y_i}\}_{I \leqslant i \leqslant J}$ & $\{\ul{x_i+1} \to \ul{y_i+1}\}_{I \leqslant i \leqslant J}$ & $B$ full on the left and $x_I + y_I < n$ \\ \hline 
    Block Translate Left & $B = \{\ul{x_i} \to \ul{y_i}\}_{I \leqslant i \leqslant J}$ & $\{\ul{x_i-1} \to \ul{y_i-1}\}_{I \leqslant i \leqslant J}$ & $B$ full on the right and $x_I + y_I > n$ \\ \hline 
    Block Contract Left & $B_{\mathcal{O}} = \{\ul{x_i}\to\ul{y_i}\}_{0 \leqslant i \leqslant I}$ & $\{\ul{x_i+1}\to\ul{y_i}\}_{0 \leqslant i \leqslant I}$ & Outer arc is $\ul{0} \to \ul{n-1}$ or outer arc is $\ul{0} \to \ul{n}$ and $I < \frac{n}{3}$ \\ \hline 
    Block Expand Left & $B_r\setminus B_\ell = \{ \ul{x_i} \to \ul{y_i} \}_{I+1 < i \leqslant J}$ & $\{ \ul{x_i-1} \to \ul{y_i} \}_{I+1 < i \leqslant J}$ & Outer arc is $\ul{0} \to \ul{n}$, $B_\mathcal{O} = B_\ell$, and $I \geqslant \frac{n}{3}$ \\ \hline 
\end{tabular}
}
    \caption{A summary of the operations on blocks of arcs and the conditions under which the operations are non-decreasing.}
    \label{table:blockTable}
\end{table}

Next, we present a flowchart in \cref{fig:alg} which describes an algorithm which takes any rainbow to a composable rainbow.  In the flowchart and the following results, proving the algorithm has the desired properties, we use the notation $x \in R$ to denote $\ul{x}$ is an endpoint of some arc in $R$.

\begin{figure}[h!]
\begin{tikzpicture}
    \draw[fill = gray!20, draw = none] (-7.5, -0.5)--(-7.5, -6.75)--(9, -6.76)--(9,-0.5);
    \node[draw, text width=3cm, text centered] at (0,0.5)(a) {\textbf{Input:} $R$, a rainbow with fewer than $\frac{n+1}{2}$ arcs.};
    \node[draw, text width=3cm, text centered, rounded corners] at (0, -1.5) (b) {(1) Check for composability near $B_\ell$ and $B_r$.};
    \node[draw, text width=3cm, text centered, double] at (7,-1.5) (c){$R$ composable.};
    \draw[->] (a)--(b);
    \draw[->] ([yshift=0.15cm]b.east)--([yshift=0.15cm]c.west) node[midway, label=above: {$0, n \notin R$}]{};
    \draw[->] ([yshift=-0.15cm]b.east)--([yshift=-0.15cm]c.west) node[midway, label=below:{\small $a_\ell, b_\ell \notin R$ or $a_r, b_r \notin R$}] {};
    \node[draw, text width = 3cm, text centered, rounded corners] at (0, -3) (d) {(2) Check for two spaces right.};
    \node[draw, text width = 3cm, text centered, double] at (7, -3) (e){Translate $B_\ell$ right. Result composable.};
    \draw[->] (b)--(d);
    \draw[->] (d)--(e) node[midway, label=below: {$n, n-1 \notin R$}]{};
    \node[draw, text width = 3cm, text centered, rounded corners] at (0, -4.5) (f) {(3) Reduce to case where $B_\mathcal{O} = B_\ell$.};
    \draw[->](d)--(f);
    \node[draw, text centered, double] at (-5, -4.5) (x) {Reflect.};
    \draw[->] (f)--(x) node[midway, label=above:{$B_\mathcal{O} \neq B_\ell$}]{};
    \draw[->] (x) --(-5,-3)--(d) {};
    \node[draw, text width=3cm, text centered, rounded corners] at (0, -6) (g) {(4) Check for space on the left.};
    \draw[->] (f)--(g) {};
    \node[draw, text centered, text width = 3cm, double] at (7, -6) (h) {Translate $B_r$ left. Result composable.};
    \draw[->] (g)--(h) node[midway, label=above:{$0, 1 \notin R$}]{};
    \node[draw, text centered, double, text width = 2cm] at (-5.5, -6) (y) {Translate $B_r$ left.};
    \draw[->] (y)--(-5.5,-5)--(-6, -5)--(-6,-1.5)--(b) {};
    \draw[->] (g)--(y) node[midway, label=above:{$0 \notin R, 1 \in R$}]{};
    \node[draw, text centered, rounded corners, text width = 3cm] at (0, -7.5) (i){(5) Check for inner translation left.};
    \node[draw, text centered, double, text width = 4cm] at (6.5, -7.5) (j) {Translate $B_r \setminus B_\ell$ left. Result composable.};
    \draw[->] (i)--(j) node[midway, label=above:{$a+2 \notin R$}]{};
    \draw[->] (g)--(i);
    \node[draw, text centered, rounded corners, text width = 3cm] at (0, -9) (k) {(6) Check widest arc length.};
    \draw[->] (i)--(k);
    \node[draw, text centered, text width = 4cm, double] at (6.5,-9) (l) {Contract $B_\mathcal{O}$ on the left. Result composable.};
    \draw[->] (k)--(l) node[midway, label=above:{$n \notin R$}]{};
    \node[draw, text centered, text width = 3cm, rounded corners] at (0, -10.5) (m) {(7) Check number of arcs in $B_\mathcal{O}$.};
    \node[draw, text centered, text width = 2cm, double] at (-5, -10.5) (n) {Contract $B_\mathcal{O}$ on the left.};
    \draw[->] (n)--(-5,-8)--(-7, -8)--(-7, -1.5)--(b);
    \draw[->] (k)--(m);
    \draw[->] (m)--(n) node[midway, label=above:{$< \frac{n+3}{3}$}]{};
    \node[draw, text centered, text width = 3cm, double] at (-4.5, -12) (o) {Expand $B_r \setminus B_\ell$ on the left.};
    \draw[->] (o)--(-7,-12)--(-7, -1.5)--(b);
    \draw[->] (m)--(0, -12)--(o) node[midway, label=below:{$\geqslant \frac{n+3}{3}$}]{};
\end{tikzpicture}
\caption{An algorithm which takes a rainbow to a composable rainbow.}\label{fig:alg}
\end{figure}
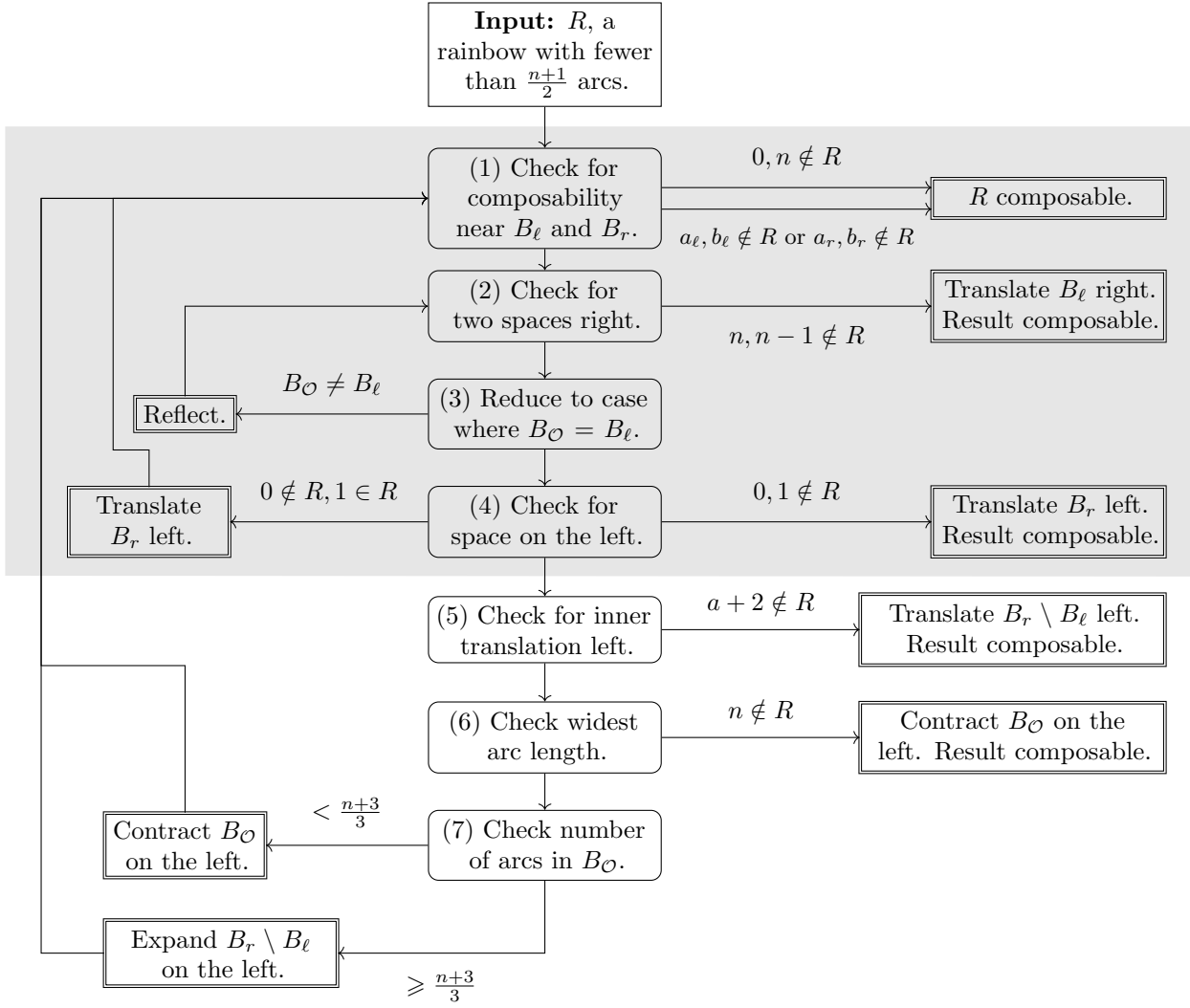

\begin{proposition}
    The output of the algorithm presented in \cref{fig:alg}, if it terminates, is a valid, composable rainbow corresponding to no fewer generators than the input rainbow.     
\end{proposition}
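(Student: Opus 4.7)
The plan is to traverse the flowchart of \cref{fig:alg} branch by branch and, at each outgoing arrow, verify two things: if the arrow leads to a terminate box, that the resulting rainbow is composable; if it leads to a further operation, that the hypotheses of the matching item in \cref{prop:rainbow_ops} (as recorded in \cref{table:singleArcTable} and \cref{table:blockTable}) are satisfied, so that the size does not decrease and the output is itself a valid rainbow.

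First I would dispatch validity preservation in one pass. Every operation invoked by the algorithm is a reflection, a block translation of a side-full block by one unit, a contraction of the outer block on the left, or an expansion of $B_r \setminus B_\ell$ on the left. Reflections are trivially valid. For the other three, the new endpoints must either be available in $R$ or be endpoints of arcs in the same block being moved simultaneously, and the flowchart's branch conditions (for example $n, n-1 \notin R$ at step (2) or $0, 1 \notin R$ at step (4)) furnish precisely this availability.

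Next I would verify the non-decreasing property by matching each flowchart hypothesis to the condition in its table row. For instance, at step (2) the hypothesis $n, n-1 \notin R$ together with the failure of step (1) forces $x_0 = 0$, so $x_0 + y_0 < n$, which is exactly the condition for Block Translate Right; the reduction in step (3) by reflection means that in steps (4)--(7) we may assume $B_\mathcal{O} = B_\ell$, justifying the application of Block Contract Left at (6) and Block Expand Left at (7). In parallel, I would verify composability at each terminate box: step (1) is composable by definition; after steps (2), (4), and (5) the relevant translation vacates an endpoint near the rainbow's boundary (respectively $\underline{x_0}$, $\underline{y_0}$, and $\underline{a+1}$), which, together with another available endpoint further inside, produces a new valid arc; after step (6) the contraction of $B_\mathcal{O}$ vacates $\underline{0}$, and since $n \notin R$ by the branch hypothesis, the arc $\underline{0} \to \underline{n}$ can be appended.

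The main obstacle I foresee is step (7): one must translate the algorithm's integer cutoff $\frac{n+3}{3}$ (governing the number of arcs in $B_\mathcal{O}$) into the rational cutoffs $I < \frac{n}{3}$ and $I \geqslant \frac{n}{3}$ required by the Block Contract Left and Block Expand Left clauses of \cref{prop:rainbow_ops}, and confirm that after step (6) has failed we are genuinely in the configuration $y_0 = n$ with $B_\mathcal{O} = B_\ell$ and $\underline{x_I + 1}$ available, so that the chosen operation is legal. Once this alignment is secured, the remaining branches reduce, with some care about which classes are vacated, to direct applications of \cref{prop:rainbow_ops}.
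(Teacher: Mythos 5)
Your plan matches the paper's proof exactly: a branch-by-branch traversal of the flowchart, checking at each step that the branch condition instantiates the hypothesis of the corresponding row of \cref{table:singleArcTable} or \cref{table:blockTable}, and at each double-bordered box identifying the newly available endpoints that admit a composed arc. The reconciliation at step (7) that you flag as the main obstacle goes through exactly as you anticipate: $B_\mathcal{O}$ has $I+1$ arcs, so ``fewer than $\frac{n+3}{3}$ arcs'' is equivalent to $I < \frac{n}{3}$, which is the threshold in \cref{prop:rainbow_ops}.
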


\begin{proof}
    We check that each step, given an valid rainbow as an input, produces a valid rainbow without reducing the number of corresponding generators.   Step (1) makes no changes to $R$.  If the algorithm does not terminate at this step, $R$ must contain 0 or $n$.
    
    Step (2) translates $B_\ell = \{\ul{x_0}+i \to \ul{y_i}\}_{0 \leqslant i \leqslant k}$ to the right, if $\ul{n}$ and $\ul{n-1}$ are available in $R$.  By the definition of $B_\ell$, $\ul{x_0+k+1}$ is also available, so there is room to translate $B_\ell$ to the left. In this case, $x_0 = 0$, since $n \notin R$.  Hence, translating $B_\ell$ to the right is a non-decreasing operation, as $B_\ell$ is full on the left and the outermost arc, $\ul{0} \to \ul{y_0}$ satisfies $x+y<n$.  

    Step (3) reflects the full rainbow if $B_\mathcal{O} \neq B_\ell$.  Reflection of a rainbow is always a valid non-decreasing operation.  Step (4) translates $B_r $ to the left, if $0 \notin R$. By an argument symmetric to the one in Step (2), this is a valid non-decreasing operation. 

    If a rainbow $R$ proceeds to Step (5), then $B_\mathcal{O}(R) = B_\ell (R)$  (so $a+1 \notin R$), $B_\ell \subsetneq B_r$ (else $a+1, b-1 \notin R$), and the outermost arc of $R$ is $\ul{0} \to \ul{n}$ or $\ul{0} \to \ul{n-1}$.   (In the flowchart, the box in grey applies operations to a rainbow until it satisfies these requirements.  So, for all steps outside the grey box, we assume $R$ has these properties.)
    
    Consider for such a rainbow, $B_r \setminus B_\ell = \{x_{k+1}, x_{k+2} \ldots, x_l, y_0 - l, \ldots, y_0-(k+2), y_0 - (k+1) \}$.  Both $\ul{x_{k+1} - 1}$ and $\ul{y_0-l-1}$ are available, so there is room to translate the block $B_r \setminus B_\ell$ to the left.  If $\ul{a+2} = \ul{x_k +2}$ is available, then $x_{k+1} \geqslant a+3 = x_k + 3$ so,
    \[ x_{k+1} + y_0 - (k+1) \geqslant x_k + 3 + y_0 - (k+1) = k + 3 + y_0 - (k+1) = y_0 + 2 \geqslant (n-1) + 2 > n. \]
    Since the sum of the outer endpoints of $B_r \setminus B_\ell$ is greater than $n$ and $B_r \setminus B_\ell$ is full on the right, translation of the block $B_r \setminus B_\ell$ to the left is non-decreasing. 
    
    In Step (6), $B_\mathcal{O}$ is contracted on the left if $n \notin R$.  Since we can now assume $B_\mathcal{O}=B_\ell$, we know $\ul{a+1}$ is available and there is thus room to contract $B_\mathcal{O}$ on the left.  When $n \notin R$, the outermost arc of $B_\mathcal{O}$ is $\ul{0} \to \ul{n-1}$, so $B_\mathcal{O}$ meets the conditions that guarantee contraction on the left is non-decreasing. 

    Any rainbow input into Step (7) has an outer arc $\ul{0} \to \ul{n}$ and has $\ul{a+1}$ available.   Consulting the table of block operations, we see that either the conditions are met to guarantee contracting $B_\mathcal{O}$ on the left is non-decreasing (true when the number of arcs is less than $\frac{n+3}{3}$) or the conditions are met to guarantee expanding $B_r \setminus B_\ell$ on the left is non-decreasing.  Since $\ul{a+1}$ is available, there must be space between $B_\mathcal{O}$ and $B_r \setminus B_\ell$ on the left, allowing for either one of these operations.
\end{proof}

\begin{proposition}
    The algorithm presented in \cref{fig:alg} terminates after performing at most four operations. 
\end{proposition}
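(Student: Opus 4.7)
The plan is to trace every execution path through the flowchart in \cref{fig:alg}, and bound the number of operations by establishing a small collection of structural invariants preserved under each operation. Since the only operation boxes are in Steps~(2)--(7), it suffices to bound how many loop-backs any single execution can require before one of the ``Result composable'' exits is reached.

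First I would dispose of the terminating branches. Any path exiting at a ``Result composable'' node after Step~(2), Step~(4), or Step~(5) performs only a single operation, and direct composability at Step~(1) uses zero operations. So I only need to control the loop-backs: Step~(3) (reflection) returns to Step~(2), while the ``non-terminating'' branches of Steps~(4), (6), and (7) return to Step~(1).

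The key structural invariants I would prove are as follows. First, the reflection branch of Step~(3) can be taken at most once in any execution: after reflection, the rainbow satisfies $B_\mathcal{O} = B_\ell$, and each subsequent block operation in the grey region (block translations in Steps~(4) and~(5), block contractions and expansions in Steps~(6) and~(7)) either preserves this property or terminates. Consequently, on any subsequent visit to Step~(3), the ``$B_\mathcal{O} \neq B_\ell$'' branch is not taken. Second, each loop-back to Step~(1) arising from Step~(4), (6), or (7) creates a newly-available endpoint adjacent to the outer arc. Specifically, translating $B_r$ left in Step~(4) vacates the former rightmost endpoints; contracting $B_\mathcal{O}$ on the left in Steps~(6) and~(7) vacates the innermost left endpoint of the outer block; and expanding $B_r\setminus B_\ell$ on the left in Step~(7) vacates the outermost leftmost endpoint of the inner block. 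In each case, on the subsequent visit to Step~(1) the freshly-vacated position either triggers one of the availability conditions $a_\ell,b_\ell\notin R$ or $a_r,b_r\notin R$ required for composability, or reduces to a rainbow where Step~(2)'s translation is immediately applicable.

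Combining these invariants, a worst-case execution consists of at most: (i) a reflection in Step~(3), (ii) a translation of $B_\ell$ right in Step~(2) after the reflection, (iii) a block contract or expand in Step~(6) or~(7) in cases where the earlier translations were blocked, and (iv) a terminal operation in Step~(2), (4), or~(5) that renders the rainbow composable. This totals at most four operations. The main technical obstacle will be the careful verification of the second invariant for the expansion branch of Step~(7): I would need to track exactly how the endpoints $x_{k+1},\ldots,x_l$ of $B_r\setminus B_\ell$ and the resulting outer block shift under expansion, and verify that the resulting rainbow satisfies either the composability check at Step~(1) or the translation condition at Step~(2) on the next pass.
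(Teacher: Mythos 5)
The overall strategy of tracing execution paths and bounding loop-backs is indeed the one the paper uses, but the two structural invariants you propose are not correct as stated, and the worst-case sequence you identify is not even a valid path through the flowchart.

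First, your claimed worst case (``(i) a reflection in Step~(3), (ii) a translation of $B_\ell$ right in Step~(2) after the reflection, (iii) a block contract or expand\ldots (iv) a terminal operation'') is impossible: in the flowchart, the translation performed in Step~(2) always exits to a ``Result composable'' box, so nothing can follow it. There is no Step~(2) translation that loops back. Second, your invariant that each loop-back from Steps~(4), (6), or~(7) immediately leads to a terminating pass is false in the Step~(7) contraction branch. After contracting $B_\mathcal{O}$ when the outer arc is $\ul{0}\to\ul{n}$, the new rainbow $R_3$ still contains $n$, so the check $0,n\notin R$ in Step~(1) fails, and the check $n,n-1\notin R$ in Step~(2) also fails; the algorithm must proceed deeper. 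The paper's proof handles this precisely: after contraction one gets outer arc $\ul{1}\to\ul{n}$, and one more operation (either a reflection in Step~(3) or a translation in Step~(4)) is needed to produce a rainbow with outer arc $\ul{0}\to\ul{n-1}$, before a terminal operation at Step~(5) or~(6) finishes things. Third, as a consequence of this, your invariant that reflection is performed at most once is also not safe: the paper's proof allows (and must allow) the path reflect, contract, reflect, terminal, so a second reflection can occur after a Step~(7) contraction.

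The ingredient you are missing is the key observation that makes the four-bound close: if the algorithm reaches Step~(7), the current rainbow $R_2$ must have outer arc $\ul{0}\to\ul{n}$, which cannot be the result of the left translation in Step~(4); hence $R_2$ is the result of at most \emph{one} operation (possibly a reflection) applied to the original input $R_0$. This is what allows a budget of three further operations (contract, reflect/translate, terminal) to stay within the bound. Your proposal has no analogue of this cancellation argument, and without it the two loop-backs (one through the grey box, one after Step~(7)) would naively cost five operations. So while the skeleton of the argument is right, the accounting does not go through as written.
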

\begin{proof}
    When a rainbow satisfying  $0, n \notin R$, $a_\ell+1, b_\ell-1 \notin R$, $a_r+1, b_r-1 \notin R$, or $n, n-1 \notin R$ is input, the algorithm terminates after at most one operation.  Henceforth, assume the input rainbow does not fall into one of those categories (and thus passes through Steps (1) and (2) unchanged).  

    For such an input rainbow, let $R_1$ be the rainbow output by Step (3) when $R_0$ is input.  If $B_\mathcal{O}(R_0) = B_\ell(R_0)$, we have $R_1 = R_0$. Otherwise, Step (3) reflects $R_0$, resulting in an $R_1$ where $B_\mathcal{O}(R_1) = B_\ell(R_1)$.  

    Let $R_2$ be the result of applying Step (4) to $R_1$.  Then, if $0 \notin R_1$,  $R_2$ is the result of translating $B_r(R_1)$ to the left.  Otherwise, $R_2 = R_1$.  Translating $B_r(R_1)$ to the left maintains the spacing between the arcs in $B_r(R_1)$, including the distance between the inner arc of $B_\mathcal{O} = B_\ell \subseteq B_r$ and $B_r \setminus B_\ell$. So, $B_\mathcal{O}(R_2) = B_\ell(R_2)$. 
    
    If $0 \in R_1$ then $R_2=R_1$ and we proceed to Step (5). If $0, 1 \notin R_1$ then $0, n \notin R_2$, so the algorithm terminates with the composable rainbow, $R_2$.    However, if $0 \notin R_1$ but $1 \in R_1$, then $R_2$ is re-input to Step (1).  For such an $R_1$, it must be that $0 \in R_2$.  Thus, if the algorithm doesn't terminate when an $R_2$ containing zero is checked for composability, $n$ or $n-1$ must be an endpoint of $R_2$ as well.  Since  $B_\mathcal{O}(R_2) = B_\ell(R_2)$, $R_2$ will not be reflected in Step (3). Then, since $0 \in R_2$, Step (4) likewise results in no operation.  So, exiting the grey box, we have a rainbow $R_2$ which is the result of at most two operations performed on $R_0$, satisfying $B_\mathcal{O}(R_2) = B_\ell(R_2)$ with outer arc $\ul{0} \to \ul{n}$ or $\ul{0} \to \ul{n-1}$.

    If the algorithm does not terminate at Step (5) or Step (6)---in either case, after a total of at most three operations---then the outer arc of $R_2$ is $\ul{0} \to \ul{n}$.  This means $R_2$ cannot have resulted from a translation on $B_r(R_1)$, so $R_2$ is in fact the result of applying at most one operation to $R_0$.   Let $R_3$ be the rainbow that results when the operation in Step (7) is applied to such an $R_2$. When $R_3$ is re-input to the algorithm, if it is found to be composable in Step (1) or can be translated to form a composable rainbow in Step (2), the algorithm terminates after at most four operations on $R_0$.  Otherwise, we consider two cases for an $R_3$ that proceeds to Step (3). 

    If $R_3$ is the result of contracting $B_\mathcal{O}(R_2)$ on the left, $R_3$ has outer arc $\ul{1}\to \ul{n}$.  If $B_\mathcal{O}(R_3)\neq B_r(R_3)$, a reflection will be performed in Step (3), resulting in $R_4$, which has outer arc $\ul{0} \to \ul{n-1}$ and satisfies $B_\mathcal{O}(R_4)=B_\ell(R_4)$.  On the other hand, if $B_\mathcal{O}(R_3)=B_\ell(R_3)$, no operation will be performed in Step (3) and a translation will be performed in Step (4). This results in a rainbow, $R_4$, which has outer arc $\ul{0} \to \ul{n-1}$ and satisfies $B_\mathcal{O}(R_4)=B_\ell(R_4)$.   At this point, the grey box is exited with a rainbow, $R_4$, which is the result of a total of three (or fewer) operations on $R_0$.  Since $n \notin R_4$, the algorithm terminates with one more operation in Step (5) or Step (6).  

    Addressing the remaining case, suppose $R_3$ is the result of expanding $B_r(R_2) \setminus B_\ell(R_2)$ on the left.  Let $\ul{a_r^2} \to \ul{b_r^2}$ be the innermost arc of $B_r(R_2)$ and $\ul{a_r^3} \to \ul{b_r^3}$ be the innermost arc of $B_r(R_3)$.  Then $a_r^3 = a_r^2 - 1$ and $b_r^3 = b_r ^2$.  The class $\ul{b_r^3 - 1} = \ul{b_r^2 - 1}$ is available in $R_3$ by the definition of $B_r$ and the class $\ul{a_r^3+1} = \ul{a_r^2}$ is available since the source of each arc in $B_r(R_2)$ has been moved left, leaving the space previously occupied by the rightmost source available.  So, once $R_3$ is re-input to the algorithm, it will be identified as composable in Step (1) and the algorithm will terminate after a total of at most three operations. 
\end{proof}

\begin{example}
    The rainbows in \cref{fig:ex1} are composable near $B_\mathcal{O}$, so the algorithm terminates immediately.  
    \vspace{-10mm}
    \begin{figure}[h!]
\centering
\begin{tikzpicture}[scale=0.7]
\foreach \i in {0,...,7}
{
    \node[fill=black!25,circle,draw,inner sep = 0pt, outer sep = 0pt, minimum size=1.5mm] (\i) at (\i/1.5,0) {};
    \node at (\i/1.5,-0.35) {\i};
}
\draw[-, line width=3pt, black] (1) edge[bend left=60] (6);
\draw[-, line width=3pt, black] (2) edge[bend left=60] (4);
\draw[-, ultra thick, rainbow-red] (1) edge[bend left=60] (6);
\draw[-, ultra thick, rainbow-orange] (2) edge[bend left=60] (4);
\end{tikzpicture}
\hspace{2cm}
\begin{tikzpicture}[scale=0.7]
\foreach \i in {0,...,8}
{
    \node[fill=black!25,circle,draw,inner sep = 0pt, outer sep = 0pt, minimum size=1.5mm] (\i) at (\i/1.5,0) {};
    \node at (\i/1.5,-0.35) {\i};
}
\draw[-, line width=3pt, black] (0) edge[bend left=60] (7);
\draw[-, line width=3pt, black] (1) edge[bend left=60] (6);
\draw[-, line width=3pt, black] (3) edge[bend left=60] (4);
\draw[-, ultra thick, rainbow-red] (0) edge[bend left=60] (7);
\draw[-, ultra thick, rainbow-red] (1) edge[bend left=60] (6);
\draw[-, ultra thick, rainbow-orange] (3) edge[bend left=60] (4);
\end{tikzpicture}
\caption{The arc $\ul{0} \to \ul{n}$ can be added to the left rainbow.  The arc $\ul{a+1} \to \ul{b-1}$ can be added to the right rainbow.}\label{fig:ex1}
\end{figure}
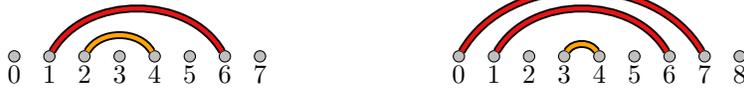

The next set of rainbows in \cref{fig:ex2} will be composable after a single translation.  The arcs to be translated are denoted with dashed lines. 

    \begin{figure}[h!]
\centering
\begin{tikzpicture}[scale=0.7]
\foreach \i in {0,...,9}
{
    \node[fill=black!25,circle,draw,inner sep = 0pt, outer sep = 0pt, minimum size=1.5mm] (\i) at (\i/1.5,0) {};
    \node at (\i/1.5,-0.35) {\i};
}
\draw[-, line width=3pt, black, dashed] (0) edge[bend left=60] (7);
\draw[-, line width=3pt, black, dashed] (1) edge[bend left=60] (5);
\draw[-, line width=3pt, black] (3) edge[bend left=60] (4);
\draw[-, ultra thick, rainbow-red, dashed] (0) edge[bend left=60] (7);
\draw[-, ultra thick, rainbow-red, dashed] (1) edge[bend left=60] (5);
\draw[-, ultra thick, rainbow-orange] (3) edge[bend left=60] (4);
\end{tikzpicture}
\hfill 
\begin{tikzpicture}[scale=0.7]
\foreach \i in {0,...,9}
{
    \node[fill=black!25,circle,draw,inner sep = 0pt, outer sep = 0pt, minimum size=1.5mm] (\i) at (\i/1.5,0) {};
    \node at (\i/1.5,-0.35) {\i};
}
\draw[-, line width=3pt, black, dashed] (2) edge[bend left=60] (9);
\draw[-, line width=3pt, black, dashed] (4) edge[bend left=60] (8);
\draw[-, line width=3pt, black] (5) edge[bend left=60] (6);
\draw[-, ultra thick, rainbow-red, dashed] (2) edge[bend left=60] (9);
\draw[-, ultra thick, rainbow-red, dashed] (4) edge[bend left=60] (8);
\draw[-, ultra thick, rainbow-orange] (5) edge[bend left=60] (6);
\end{tikzpicture}
\hfill 
\begin{tikzpicture}[scale=0.7]
\foreach \i in {0,...,9}
{
    \node[fill=black!25,circle,draw,inner sep = 0pt, outer sep = 0pt, minimum size=1.5mm] (\i) at (\i/1.5,0) {};
    \node at (\i/1.5,-0.35) {\i};
}
\draw[-, line width=3pt, black] (0) edge[bend left=60] (9);
\draw[-, line width=3pt, black, dashed] (3) edge[bend left=60] (8);
\draw[-, line width=3pt, black] (4) edge[bend left=60] (6);
\draw[-, ultra thick, rainbow-red] (0) edge[bend left=60] (9);
\draw[-, ultra thick, rainbow-orange, dashed] (3) edge[bend left=60] (8);
\draw[-, ultra thick, rainbow-yellow] (4) edge[bend left=60] (6);
\end{tikzpicture}
\caption{From left to right: translate $B_\ell$ right [Step (2)], translate $B_r$ left [Step (4)], translate $B_r\setminus B_\ell$ left [Step (5)].}\label{fig:ex2}
\end{figure}
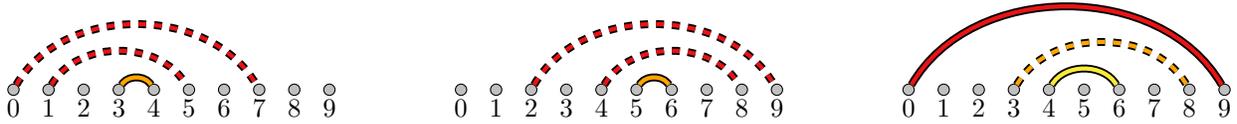

This set of rainbows in \cref{fig:ex3} demonstrate the potential effects of Steps (6) and (7). The arcs to be contracted or expanded are denoted with dashed lines.  Note that each of these rainbows satisfies the conditions of all rainbows that proceed past Step (4): $B_\mathcal{O} = B_\ell$ and the outermost arc is $\ul{0}\to \ul{n}$ or $\ul{0}\to \ul{n-1}$.

    \begin{figure}[h!]
\centering
\begin{tikzpicture}[scale=0.75]
\foreach \i in {0,...,9}
{
    \node[fill=black!25,circle,draw,inner sep = 0pt, outer sep = 0pt, minimum size=1.5mm] (\i) at (\i/1.5,0) {};
    \node at (\i/1.5,-0.35) {\i};
}
\draw[-, line width=3pt, black, dashed] (0) edge[bend left=60] (8);
\draw[-, line width=3pt, black, dashed] (1) edge[bend left=60] (7);
\draw[-, line width=3pt, black] (3) edge[bend left=60] (6);
\draw[-, line width=3pt, black] (4) edge[bend left=60] (5);
\draw[-, ultra thick, rainbow-red, dashed] (0) edge[bend left=60] (8);
\draw[-, ultra thick, rainbow-red, dashed] (1) edge[bend left=60] (7);
\draw[-, ultra thick, rainbow-orange] (3) edge[bend left=60] (6);
\draw[-, ultra thick, rainbow-orange] (4) edge[bend left=60] (5);
\end{tikzpicture}
\hfill 
\begin{tikzpicture}[scale=0.75]
\foreach \i in {0,...,9}
{
    \node[fill=black!25,circle,draw,inner sep = 0pt, outer sep = 0pt, minimum size=1.5mm] (\i) at (\i/1.5,0) {};
    \node at (\i/1.5,-0.35) {\i};
}
\draw[-, line width=3pt, black, dashed] (0) edge[bend left=60] (9);
\draw[-, line width=3pt, black, dashed] (1) edge[bend left=60] (8);
\draw[-, line width=3pt, black, dashed] (2) edge[bend left=60] (7);
\draw[-, line width=3pt, black] (5) edge[bend left=60] (6);
\draw[-, ultra thick, rainbow-red, dashed] (0) edge[bend left=60] (9);
\draw[-, ultra thick, rainbow-red, dashed] (1) edge[bend left=60] (8);
\draw[-, ultra thick, rainbow-red, dashed] (2) edge[bend left=60] (7);
\draw[-, ultra thick, rainbow-orange] (5) edge[bend left=60] (6);
\end{tikzpicture}
\vspace{-10mm}

\begin{tikzpicture}[scale=0.6]
\foreach \i in {0,...,21}
{
    \node[fill=black!25,circle,draw,inner sep = 0pt, outer sep = 0pt, minimum size=1.5mm] (\i) at (\i/1.5,0) {};
    \node at (\i/1.5,-0.35) {\i};
}
\draw[-, line width=3pt, black] (0) edge[bend left=60] (21);
\draw[-, line width=3pt, black] (1) edge[bend left=60] (20);
\draw[-, line width=3pt, black] (2) edge[bend left=60] (19);
\draw[-, line width=3pt, black] (3) edge[bend left=60] (18);
\draw[-, line width=3pt, black] (4) edge[bend left=60] (17);
\draw[-, line width=3pt, black] (5) edge[bend left=60] (16);
\draw[-, line width=3pt, black] (6) edge[bend left=60] (15);
\draw[-, line width=3pt, black] (7) edge[bend left=60] (14);
\draw[-, line width=3pt, black, dashed] (9) edge[bend left=60] (13);
\draw[-, line width=3pt, black] (10) edge[bend left=60] (11);
\draw[-, ultra thick, rainbow-red] (0) edge[bend left=60] (21);
\draw[-, ultra thick, rainbow-red] (1) edge[bend left=60] (20);
\draw[-, ultra thick, rainbow-red] (2) edge[bend left=60] (19);
\draw[-, ultra thick, rainbow-red] (3) edge[bend left=60] (18);
\draw[-, ultra thick, rainbow-red] (4) edge[bend left=60] (17);
\draw[-, ultra thick, rainbow-red] (5) edge[bend left=60] (16);
\draw[-, ultra thick, rainbow-red] (6) edge[bend left=60] (15);
\draw[-, ultra thick, rainbow-red] (7) edge[bend left=60] (14);
\draw[-, ultra thick, rainbow-orange, dashed] (9) edge[bend left=60] (13);
\draw[-, ultra thick, rainbow-yellow] (10) edge[bend left=60] (11);
\end{tikzpicture}
\caption{Left: contract $B_\mathcal{O}$ on the left when $n \notin R$ [Step (6)], \\
Right: contract $B_\mathcal{O}$ on the left when $n \in R$ and $B_\mathcal{O}$ has fewer than $\frac{n+3}{3}$ arcs [Step (7)], \\
Center: expand $B_r \setminus B_\ell$ on the left when $n \in R$ and $B_\mathcal{O}$ has at least $\frac{n+3}{3}$ arcs [Step (7)].}\label{fig:ex3}
\end{figure}
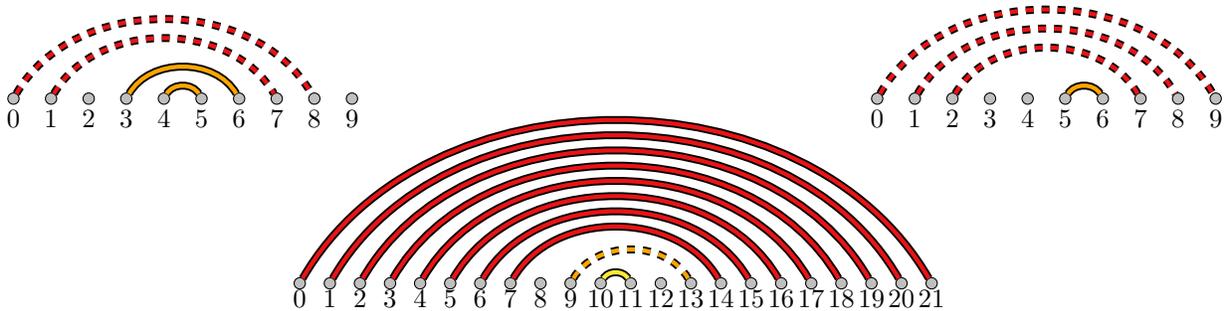

\end{example}

For any rainbow with fewer than $\lfloor \frac{n+1}{2} \rfloor$ arcs, the algorithm presented in \cref{fig:alg} gives a sequence of non-decreasing moves taking the given rainbow to a composable rainbow (a rainbow with room to add an additional arc). Composing a rainbow with an additional arc is strictly increasing on the number of generators, so this results in the following. 

\begin{corollary}
    \label{numArcs}
    The maximal sized rainbow for $C_{p_1 p_2 \cdots p_n}$ has $\lfloor \frac{n+1}{2} \rfloor$ arcs.
\end{corollary}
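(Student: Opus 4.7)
The plan is to establish a matching upper bound and lower bound on the number of arcs in a maximally sized rainbow. The upper bound is a quick counting observation on endpoints, and the lower bound is essentially a repackaging of the work already done in constructing the algorithm in \cref{fig:alg}.

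For the upper bound, I would observe that any rainbow $R = \{ \underline{x_i} \to \underline{y_i} \}_{i=0}^m$ has, by definition, strictly nested endpoints $x_0 < x_1 < \cdots < x_m < y_m < \cdots < y_1 < y_0$, giving $2(m+1)$ pairwise distinct elements of the $(n+1)$-point set $\{0, 1, \ldots, n\}$. Hence $2(m+1) \leqslant n+1$, which rearranges to $m+1 \leqslant \lfloor \frac{n+1}{2} \rfloor$. Thus no rainbow can have more than $\lfloor \frac{n+1}{2} \rfloor$ arcs.

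For the matching lower bound, suppose for contradiction that a maximally sized rainbow $R$ has strictly fewer than $\lfloor \frac{n+1}{2} \rfloor$ arcs. Then the algorithm of \cref{fig:alg} applies to $R$ and, by the two preceding propositions, terminates in at most four steps producing a composable rainbow $R'$ representing at least as many arrows in $\Sub(G)$ as $R$. Appending an additional arc via Composition (part~(1) of \cref{prop:rainbow_ops}) yields a rainbow $R''$ with one more arc than $R'$ whose size is strictly larger than that of $R'$, and hence strictly larger than that of $R$. This contradicts the maximality of $R$, so a maximally sized rainbow must have exactly $\lfloor \frac{n+1}{2} \rfloor$ arcs.

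The main obstacle in this argument lies entirely upstream, namely in verifying that the algorithm of \cref{fig:alg} both terminates and is non-decreasing on generator count at every step; once that is secured, the corollary drops out formally from the dichotomy between the non-decreasing algorithmic operations and the strictly increasing composition step.
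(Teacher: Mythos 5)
Your proposal is correct and follows essentially the same route as the paper: the upper bound on arc count is immediate from the definition of rainbow (the endpoints $x_0 < \cdots < x_m < y_m < \cdots < y_0$ occupy $2(m+1)$ of the $n+1$ available classes), and the lower bound is exactly the paper's argument that the algorithm of \cref{fig:alg} transforms any sub-maximal-arc-count rainbow into a composable one without losing size, after which composition strictly increases size. The paper leaves the pigeonhole upper bound implicit, so your version is marginally more self-contained, but the mathematical content and the dependence on the preceding two propositions about the algorithm are identical.
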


When $n$ is even, all rainbows with the maximal number of arcs have the form $R[x]$ for some $0 \leqslant x \leqslant n$ (\cref{def:maxRb}).
\begin{lemma}
    \label{evenOptions}
    When $n$ is even, the maximal rainbow is realized by $R[n/2]$ or both $R[0]$ and $R[n]$. 
\end{lemma}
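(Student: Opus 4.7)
The plan is to reduce the problem to the enumerated family $R[x]$ and then analyze its sizes via a clean arc-by-arc trinomial comparison.

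First, I would invoke \cref{numArcs}, which for even $n$ says that every maximally sized rainbow has exactly $n/2$ arcs. Nested arcs have distinct endpoints, so the $n$ endpoints use all but one of the $n+1$ classes in $\{\ul{0}, \ul{1}, \ldots, \ul{n}\}$. Since the arcs must then pair up in the unique nested way on the remaining $n$ classes, the rainbow must equal $R[x]$ for some $x \in \{0, \ldots, n\}$. Thus it suffices to find the value of $x$ maximizing $|R[x]|$.

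Second, I would exploit the reflection operation from \cref{prop:rainbow_ops}(5) to obtain $|R[x]| = |R[n-x]|$, reducing to the range $0 \leqslant x \leqslant n/2$. A direct enumeration shows that for $x < n/2$ the rainbow $R[x]$ is the disjoint union of the arcs $\ul{j}\to\ul{n-j}$ for $0 \leqslant j < x$ together with the arcs $\ul{j+1}\to\ul{n-j}$ for $x \leqslant j \leqslant n/2-1$. Passing from $R[x]$ to $R[x+1]$ swaps the single arc $\ul{x+1}\to\ul{n-x}$ for $\ul{x}\to\ul{n-x}$, yielding
\[
|R[x+1]| - |R[x]| = \binom{n}{x,\, n-2x,\, x} - \binom{n}{x+1,\, n-2x-1,\, x}.
\]
The ratio of these trinomials is $\tfrac{x+1}{n-2x}$, so the sign of the difference is positive exactly when $3x+1 > n$.

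Third, this sign analysis shows that the finite sequence $|R[0]|, |R[1]|, \ldots, |R[n/2]|$ is strictly decreasing for $x < (n-1)/3$ and strictly increasing afterward; it is therefore unimodal with its minimum near $x = \lfloor (n-1)/3\rfloor$ and its maximum at one of the two endpoints, namely $R[0]$ or $R[n/2]$. Combined with the reflection symmetry, the global maximum of $|R[x]|$ over $x \in \{0, \ldots, n\}$ is realized either by $R[n/2]$ alone or by the pair $R[0], R[n]$, proving the lemma. The argument is largely routine; the only care required is the explicit description of the arcs of $R[x]$, and the classification of which of the two cases actually holds for a given $n$ is what \cref{lem:ineq} quantifies separately.
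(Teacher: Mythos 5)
Your proof is correct and takes essentially the same route as the paper: reduce to the family $R[x]$ via the arc count from \cref{numArcs}, then compare sizes through the trinomial ratio $\tfrac{x+1}{n-2x}$. Your packaging --- computing the single-step difference $|R[x+1]|-|R[x]|$ as one trinomial swap and reading off unimodality of the sequence --- is a mild streamlining of the paper's term-by-term replacement of $\binom{n}{i,\,n-2i,\,i}$ by $\binom{n}{i+1,\,n-2i-1,\,i}$, but the underlying inequality and conclusion are identical.
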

\begin{proof}
    
When $n$ is even, a rainbow with $\lfloor \frac{n+1}{2} \rfloor = \frac{n}{2}$ arcs has exactly one of the $n+1$ classes not included in the rainbow.  This results in two blocks of arcs: a block of arcs $\underline{i} \to \underline{n-i}$ for $0 \leqslant i<x$ and a block of arcs $\underline{i} \to \underline{n-i+1}$ for $x < i \leqslant \frac{n}{2}$.  The size of the generating set corresponding to the rainbow excluding $\underline{x}$ is thus given by
\begin{align*} 
    |R[x]| & = \sum_{i=0}^{x-1}  \binom{n}{i, (n-i)-i, n-(n-i)} + \sum_{i=x+1}^{n/2} \binom{n}{i, (n-i+1)-i, n-(n-i+1)} \\
    & = \sum_{i=0}^{x-1} \binom{n}{i, n-2i, i} + \sum_{i=x+1}^{n/2} \binom{n}{i, n-2i+1, i-1}   \\
    &= \sum_{i=0}^{x-1} \binom{n}{i, n-2i, i} + \sum_{i=x}^{(n/2)-1} \binom{n}{i+1, n-2i-1, i}. 
\end{align*}

If the excluded term is $\underline{x}$ where $x \leqslant \frac{n-1}{3}$, then for all $i \leqslant x$, 
    \begin{align*}
        i & \leqslant \frac{n-1}{3} \\
        i+1 & \leqslant n - 2i \\
        \dfrac{n!}{i! (n-2i)! i!} &\leqslant \dfrac{n!}{(i+1)! (n-2i-1)!i!} \\
        \binom{n}{i, n-2i, i} &\leqslant \binom{n}{i+1, n-2i-1, i}.
    \end{align*}
So,
\[
    \lvert R[x] \rvert = \sum_{i=0}^{x-1} \binom{n}{i, n-2i, i} + \sum_{i=x}^{(n/2)-1} \binom{n}{i+1, n-2i-1, i}  \leqslant \sum_{i=0}^{(n/2)-1} \binom{n}{i+1, n-2i-1, i}. 
\]

\noindent The term on the right of the inequality above is the size of $R[0]$ (equivalently, the size of $R[n]$).

On the other hand, if the excluded term is $\underline{x}$ where $x > \frac{n-1}{3}$, we can likewise compute
\[ 
    \sum_{i=0}^{x-1} \binom{n}{i, n-2i, i} + \sum_{i=x}^{(n/2)-1} \binom{n}{i+1, n-2i-1, i}  < \sum_{i=0}^{(n/2)-1} \binom{n}{i, n-2i, i}. 
\]
\noindent The term on the right of the inequality above is the size of $R[n/2]$.

Thus every $\lvert R[m] \rvert $ is bounded above by $\lvert R[n/2] \rvert$ or $\lvert R[0] \rvert = \lvert R[n] \rvert$.
\end{proof}

We are now ready to prove the central theorem of this section. 

\begin{theorem}\label{thm:maincube}
 Let $G = C_{p_1 \cdots p_n}$ for $p_i$ distinct primes. Then
    \[
    \mathfrak{c}(G) \geqslant \begin{cases}
        \sum\limits_{i=0}^{\floor{\frac{n-1}{2}}} \binom{n}{n-i} \times \binom{n-i}{i+1} & n = 2, 4, 6 \\
        &\\
        \sum\limits_{i=0}^{\floor{\frac{n-1}{2}}} \binom{n}{n-i} \times \binom{n-i}{i} & n \neq 2, 4, 6. \\
        \end{cases}
    \]
This is realized by the complete rainbow when $n$ is odd, uniquely among rainbows by $R[n/2]$ when $n \geqslant 8$ is even, and by both $R[0]$ and $R[n]$ when $n=2, 4,$ or 6.
\end{theorem}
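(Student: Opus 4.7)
The plan is to assemble the machinery developed throughout \cref{subsec:maxrainbow}: by \cref{proposition-rainbowsareminimal}, every partial rainbow on $G$ is a minimal generating set for the transfer system it generates, so its cardinality is a lower bound for $\mathfrak{c}(G)$. The task therefore reduces to identifying the largest partial rainbow for $G=C_{p_1\cdots p_n}$ and computing its size, which by \cref{numArcs} has exactly $\lfloor (n+1)/2\rfloor$ arcs.

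First, I would split into the two parity cases. When $n$ is odd, there are $n+1$ (an even number of) available classes, and a rainbow with $(n+1)/2$ arcs must use every class $\ul{0},\ldots,\ul{n}$ as an endpoint. This forces the rainbow to be the complete rainbow of \cref{def:maxRb}, whose arcs are exactly $\ul{i}\to\ul{n-i}$ for $0\leqslant i\leqslant (n-1)/2$. Its size is the direct sum
\[
\sum_{i=0}^{(n-1)/2}\binom{n}{i,n-2i,i}=\sum_{i=0}^{(n-1)/2}\binom{n}{n-i}\binom{n-i}{i},
\]
which matches the stated formula in the $n\neq 2,4,6$ case.

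When $n$ is even, any rainbow with $n/2$ arcs omits exactly one class from $\{\ul{0},\ldots,\ul{n}\}$, so it has the form $R[x]$ for some $x$. By \cref{evenOptions}, the maximum among the $R[x]$ is realized by $R[n/2]$ or by $R[0]=R[n]$. I would then compute the two sizes explicitly: the arcs of $R[n/2]$ are $\ul{i}\to\ul{n-i}$ for $0\leqslant i\leqslant n/2-1$, giving the same "odd-case" expression $\sum_{i=0}^{n/2-1}\binom{n}{n-i}\binom{n-i}{i}$, while the arcs of $R[0]$ are $\ul{i+1}\to\ul{n-i}$ for $0\leqslant i\leqslant n/2-1$, giving $\sum_{i=0}^{n/2-1}\binom{n}{n-i}\binom{n-i}{i+1}$.

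The last step is to decide which of these two is larger. This is precisely the content of \cref{lem:ineq}, which shows $|R[0]|<|R[n/2]|$ for every even $n\geqslant 8$ and the reverse inequality for $n=2,4,6$ (the small cases being verified by direct computation, as in the check $|R[0]|=126>121=|R[3]|$ for $n=6$). The main obstacle in the whole argument is really this asymptotic comparison between Riordan numbers and central binomial coefficients, which is why \cref{lem:ineq} had to be established before the theorem; once it is in hand, the case analysis above immediately yields the stated lower bound together with the uniqueness statements about which $R[x]$ achieves it.
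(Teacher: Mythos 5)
Your proposal is correct and follows essentially the same route as the paper: reduce to finding the maximal partial rainbow (via Proposition~\ref{proposition-rainbowsareminimal} and Corollary~\ref{numArcs}), handle the odd case via the complete rainbow and the even case by comparing $R[0]$ and $R[n/2]$ using Lemma~\ref{evenOptions}, with Lemma~\ref{lem:ineq} settling $n\geqslant 8$ and direct computation settling $n=2,4,6$. One small wording caution: Lemma~\ref{lem:ineq} itself only asserts the inequality $|R[0]|<|R[n/2]|$ for even $n\geqslant 8$; it does not prove the reverse inequality for $n=2,4,6$ — that direction comes solely from the explicit table of values, which your parenthetical acknowledges but your main clause misattributes to the lemma.
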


\begin{proof}
    We will show the described rainbows are maximal among rainbows with $\lfloor \frac{n+1}{2} \rfloor$ arcs.  Together with Corollary \ref{numArcs}, this completes the proof.

    \begin{enumerate}[label=(\alph*)]
        \item When $n$ is odd, $\lfloor \frac{n+1}{2} \rfloor = \frac{n+1}{2}$.  The complete rainbow is the only possible rainbow with $\frac{n+1}{2}$ arcs on the $n+1$ endpoints $\underline{0}, \underline{1}, \ldots, \underline{n}$.  
        
        The complete rainbow has arcs $\ul{i} \to \ul{n-i}$ for all $0 \leqslant i \leqslant \frac{n-1}{2}$ The size of this rainbow is thus, \[\sum\limits_{i=0}^{\frac{n-1}{2}} \binom{n}{i, (n-i)-i, n - (n-i)} =\sum\limits_{i=0}^{\frac{n-1}{2}} \binom{n}{i, n-2i, i} = \sum\limits_{i=0}^{\frac{n-1}{2}} \binom{n}{n-i} \times \binom{n-i}{i}.\]
    \end{enumerate}

        \begin{enumerate}[label=(\alph*), resume]
            \item For $n=2, 4,$ or $6$, we compute the possible maximal rainbows as given by Lemma \ref{evenOptions}: 

\begin{center}
\begin{tabular}{|c|c|c|c|c|c|c|c|} \cline{0-1}\cline{4-5}\cline{7-8}
            \multicolumn{2}{|c|}{$n=2$} &  & \multicolumn{2}{|c|}{$n=4$}  & & \multicolumn{2}{|c|}{$n=6$}  \\ \cline{0-1}\cline{4-5}\cline{7-8}
              $\underline{x}$ excluded & $|R[x]|$ & \hspace{1cm} &  $\underline{x}$ excluded & $|R[x]|$ & \hspace{1cm} &    $\underline{x}$ excluded & $|R[x]|$ \\ \cline{0-1}\cline{4-5}\cline{7-8}
                 \underline{0} & 2 &   & $\underline{0} $ & 16 & &   $\underline{0}$ & 126 \\
              \underline{1} & 1 & & \underline{2} & 13 & & $\underline{3}$ & 121 \\
               \underline{2} & 2 & & \underline{4} & 16 & & $\underline{6}$ & 126 \\ \cline{0-1}\cline{4-5}\cline{7-8}
\end{tabular}
\end{center}

In each case, the complexity of $C_{p_1 p_2 \cdots p_n}$ is realized by both $R[0]$ and $R[n]$.  It can be checked in each case that the given formula agrees with $|R[0]|=|R[n]|$, or the formula can be deduced from the fact that $R[n]$ has arcs $\ul{i} \to \ul{n-i-1}$ for all $0 \leqslant i \leqslant \frac{n}{2}-1$

        \item Finally, we consider the case where $n$ is even and greater than six. By Lemma \ref{lem:ineq},
        \[ |R[0]| = \sum_{i=0}^{ \frac{n}{2} -1 } \binom{n}{i+1, n-2i-1, i} < \sum_{i=0}^{ \frac{n}{2}-1} \binom{n}{i, n-2i, i} = \sum_{i=0}^{ \frac{n}{2}-1} \binom{n}{i} \times \binom{n-i}{i}  =  |R[n/2]|. \]
        \cref{evenOptions} implies $R[n/2]$ is the unique rainbow maximizing size. \qedhere
 \end{enumerate}
\end{proof}

\section{\texorpdfstring{Complexity of $C_{p^nq}$}{Complexity of C\_p\^nq}}\label{sec:grids}

\tikzset{
    every path/.style={-{>[sep = 1mm]}
    }
}

In this section, we consider the case of $G = C_{p^nq}$ where $p$ and $q$ are distinct primes and $n \in \mathbb{N}_{>0}$. The lattice $\Sub(G)$ is isomorphic to $[n]\times [1]$. Graphically we will draw the $[n]$ horizontally and the $[1]$ vertically, and use a coordinate system with $(i,j)$ referring to $C_{p^iq^j}\leqslant C_{p^nq}$. \cref{fig:Cp2qwidth} gives the subgroup lattice of $C_{p^2q}$.

\begin{figure}[h!]
    \centering
    \begin{tikzpicture}[scale=2]
        \node[below left] at (0,0) {$(0,0)$};
        \node[below] at (1,0) {$(1,0)$};
        \node[below right] at (2,0) {$(2,0)$};
        \node[above left] at (0,1) {$(0,1)$};
        \node[above] at (1,1) {$(1,1)$};
        \node[above right] at (2,1) {$(2,1)$};
        \grid{2}{1}
    \end{tikzpicture}
    \caption{$\Sub(C_{p^2q})$.}
    \label{fig:Cp2qwidth}
\end{figure}

Using \cref{prop:width} we can immediately conclude the following:

\begin{lemma}
    Let $G = C_{p^nq}$ where $p$ and $q$ are distinct primes. Then $ \w (G) = n+1$.
\end{lemma}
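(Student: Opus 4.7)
The plan is to invoke \cref{prop:width} directly and then carry out a straightforward combinatorial count in the lattice $\Sub(G) \cong [n]\times[1]$. Since $G = C_{p^nq}$ is abelian, every conjugacy class of subgroups is a singleton, so the width simply equals the number of proper meet-irreducible subgroups. (We exclude $G$ itself from the count, consistent with the convention that identity arrows $(G,G)$ are not included in generating sets, as used in \cref{ex:s5-width} and the proof of \cref{prop:width}.)

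Next, I would identify the meet-irreducible elements of $[n]\times[1]$, where I use coordinates $(i,j)$ as in the figure. An element is meet-irreducible precisely when it is covered by at most one element of the lattice. Going through each node:
\begin{itemize}
    \item $(i,0)$ for $0 \leqslant i < n$ is covered by both $(i+1,0)$ and $(i,1)$, and indeed $(i,0) = (i+1,0) \wedge (i,1)$, so it is \emph{not} meet-irreducible;
    \item $(n,0)$ is covered only by $(n,1)$, hence is meet-irreducible;
    \item $(i,1)$ for $0 \leqslant i < n$ is covered only by $(i+1,1)$, hence is meet-irreducible.
\end{itemize}
This gives $1 + n = n+1$ proper meet-irreducible subgroups.

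Applying \cref{prop:width} and the fact that $G$ is abelian, we conclude $\w(G) = n+1$, with an explicit minimal generating set for the complete transfer system given (up to the choice of representative produced by \cref{cons:rev-rubin}) by the $n+1$ arrows $\{((n,0),(n,1))\} \cup \{((i,1),(i+1,1)) \mid 0 \leqslant i < n\}$. There is no real obstacle here: the only thing to verify carefully is the enumeration of meet-irreducibles in $[n]\times[1]$, which is immediate from the covering structure. This lemma serves mainly as a warm-up before the more substantial complexity computation for $C_{p^nq}$ that follows.
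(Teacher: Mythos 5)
Your proof is correct and takes essentially the same route as the paper: invoke \cref{prop:width} and count (conjugacy classes of) meet-irreducible subgroups, which here are $C_{p^n}$ together with $C_{p^iq}$ for $0\leqslant i\leqslant n-1$. The paper simply lists these $n+1$ subgroups and concludes; you spell out the cover-theoretic reason each is (or is not) meet-irreducible in $[n]\times[1]$, which is the same content with a little more detail. One small point of precision: the paper's own characterization (\cref{rem:F8spicy}) is that meet-irreducibles are those elements covered by \emph{exactly} one element, which automatically excludes the top element $G$; your ``at most one'' would admit $G$ and require the separate exclusion you then make, so the conclusion is unaffected but the phrasing is slightly nonstandard. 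Your proposed alternative minimal generating set $\{((n,0),(n,1))\}\cup\{((i,1),(i+1,1))\mid 0\leqslant i<n\}$ is indeed valid (it generates the complete transfer system via restriction and composition, and has the minimal size $n+1$), though it is not strictly needed for the lemma and is a different set from the one $S=\{(I,G)\mid I \text{ meet-irreducible}\}$ exhibited in \cref{prop:width}.
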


\begin{proof}
    The meet-irreducible subgroups of $C_{p^nq}$ are 
    \[
C_{p^n},\ C_{q},\ C_{pq},\ C_{p^2q},\dots, C_{p^{n-1}q}, 
\]
hence the result.
\end{proof}

The remainder of this section is devoted to the calculation of the complexity $\mathfrak{c}(C_{p^nq})$. We will denote the arrow $(C_{p^a q^x}, C_{p^b q^y}) \in \mathbb{I}(\Sub(C_{p^n q^m}))$, by $(a, x; b, y).$ That is, $(a,x; b,y)$ refers to the arrow from $(a,x)$ to $(b,y)$ in $[n]\times[1].$

We will consider three types of non-trivial arrows in $[n] \times [1]$:
\begin{itemize}
    \item \textit{Top Arrows}: arrows of the form $(i1,;j,1)$ (with $i<j$)
    \item \textit{Diagonal Arrows}: arrows of the form $(i,0;j,1)$ (with $i\leqslant j$)
    \item \textit{Bottom Arrows}: arrows of the form $(i,0;j,0)$ (with ($i < j$)
\end{itemize}

Note that in this definition, vertical arrows of the form $(i,0;i,1)$ are considered diagonal.

\begin{lemma}\label{lemma:farleft}
    Let $S$ be a minimal generating set for a transfer system $\mathsf{T} \in \mathsf{Tr}(C_{p^nq})$. Then at most one arrow in $S$ of each type (top, bottom, or diagonal) can start at the far left (i.e., can start at either $(0,0)$ or $(0,1)$.
\end{lemma}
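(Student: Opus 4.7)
Plan: I would proceed by contradiction. Fix a type (top, bottom, or diagonal) and suppose $S$ contains two distinct arrows $\alpha_1, \alpha_2$ of that type, each starting at a far-left subgroup. The first thing to observe is that within a single type the far-left hypothesis forces a common source: a top arrow starting at the far left has source $(0,1)$, while a bottom or diagonal arrow starting at the far left has source $(0,0)$. So I can write $\alpha_i = (K, H_i)$ for a common $K \in \{(0,0),(0,1)\}$. The targets $H_1, H_2$ lie on a single row of $[n]\times[1]$ (the top row for top and diagonal arrows, the bottom row for bottom arrows), hence are totally ordered by inclusion; without loss of generality I assume $H_1 \leqslant H_2$, and distinctness then forces $H_1 < H_2$.

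The redundancy is then produced by a single application of the restriction axiom. Since $\alpha_2 = (K, H_2) \in \mathsf{T}$ and $H_1 \leqslant H_2$, restriction yields $(K \cap H_1, H_1) \in \mathsf{T}$. Because $\alpha_1 = (K, H_1)$ is a valid interval we have $K \leqslant H_1$, so $K \cap H_1 = K$, and what the restriction axiom delivers is exactly $\alpha_1$. Unwinding through \cref{cons:rubin}, this means $\alpha_1 \in \langle\{\alpha_2\}\rangle \subseteq \langle S \setminus \{\alpha_1\}\rangle$, so $\langle S \setminus \{\alpha_1\}\rangle = \mathsf{T}$, contradicting the minimality of $S$.

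The proof really has no serious obstacle beyond bookkeeping: the content is the observation that two far-left arrows of the same type share a source, after which the restriction axiom trivially recovers the shorter arrow from the longer. The only check I expect to spell out is that the common-source claim holds in each of the three cases, which is immediate from the type definitions given immediately above the statement. One stylistic remark: it might be cleaner to phrase the result as ``if two arrows of the same type share a source, the one with the smaller target is redundant,'' since nothing in the argument above actually uses that the common source is far-left — the lemma as stated is exactly the special case needed for the subsequent complexity estimates on $C_{p^nq}$.
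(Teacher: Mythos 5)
Your proof is correct and matches the paper's argument exactly: the paper also observes that, of two same-type far-left arrows, the one with the right-most target induces the other by restriction, contradicting minimality. Your write-up merely spells out the common-source bookkeeping (and the observation that $K\cap H_1 = K$) that the paper leaves implicit, and your closing remark that the far-left hypothesis is not actually needed — only that the two arrows share a source — is accurate.
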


\begin{proof}
    Given two arrows of the same type starting at the far left, whichever one has the right-most terminal endpoint will induce the other by restriction (for example, see \cref{fig:farleft}). Thus, since $S$ is minimal, it can only include one of these arrows.
    \begin{figure}[ht]
        \centering 
        \begin{tikzpicture}
            \draw (0,0) -- (5,1);
            \draw[dashed] (0,0) -- (3,1);
            \draw[bend right] (0,0) to (4,0);
            \draw[bend right, dashed] (0,0) to (2,0);
            \draw[bend left] (0,1) to (2,1);
            \draw[bend left, dashed] (0,1) to (1,1);
            \grid{6}{1}
        \end{tikzpicture}
        \caption{Each dashed arrow is generated by the solid arrow of the same type by restriction.}
        \label{fig:farleft}
    \end{figure}
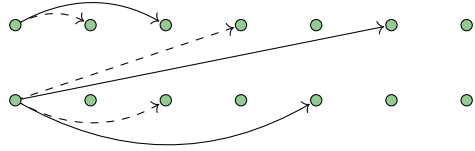
\end{proof}

\begin{lemma}\label{lemma:typepairs}
    Let $S$ be a minimal generating set for a transfer system $\mathsf{T} \in \mathsf{Tr}(C_{p^nq})$. Let $T,\ B$, and $D$ be the numbers of top, bottom, and diagonal arrows, respectively, in $S$. We have:
    \begin{enumerate}[leftmargin=*]
        \item $T+B\leqslant n$, and in the case of equality, $\langle S\rangle$ contains the arrow $(0,0;n,0)$.
        \item $T+D\leqslant n+1$, and in the case of equality, $\langle S\rangle$ contains the arrow $(0,0;n,1)$.
        \item $B+D\leqslant n+1$, and in the case of equality, $\langle S\rangle$ contains the arrow $(0,0;n,0)$.
    \end{enumerate}
\end{lemma}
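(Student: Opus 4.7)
Proof proposal:

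My plan is to prove (1) in detail and sketch how (2) and (3) adapt by analogous source-counting and chain arguments applied to different pairs of arrow types.

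The first ingredient for (1) is structural. In any minimal $S$, the restriction axiom forces top arrows in $S$ to have distinct source columns: if $(i,1;j_1,1)$ and $(i,1;j_2,1)$ lie in $S$ with $j_1<j_2$, then restricting the second by $L=C_{p^{j_1}q}$ produces the first, rendering it redundant. The same argument handles bottom arrows. Moreover, if both a top $(i,1;a,1)$ and a bottom $(i,0;b,0)$ lie in $S$ at the same column $i$, then we must have $b>a$: otherwise restriction of the top by $L=C_{p^b}$ yields the bottom. Let $s_1<\cdots<s_m$ enumerate the distinct columns supporting top or bottom arrows in $S$ (all in $\{0,\ldots,n-1\}$), and let $r$ count shared columns. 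Then $T+B=m+r$.

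The main combinatorial step is to show $m+r\leq n$ via a gap-and-reach analysis. For each shared column $s_k$, write $a_k$ for the top reach and $b_k>a_k$ for the bottom reach. Non-redundancy of the bottom arrow $(s_k,0;b_k,0)$ in $S$ means that in the reduced transfer system $\langle S\setminus\{(s_k,0;b_k,0)\}\rangle$, the maximum reach from $s_k$ — at most $a_k$ via restriction of the top — cannot extend to $b_k$ through subsequent sources $s_{k+1},\ldots,s_m$. Tracking how the reaches $R_j$ of later sources (equal to $a_j$ or $b_j$ as $s_j$ is non-shared or shared) chain together, bounded above by $n$, and handling separately whether $s_m$ itself is shared (in which case $b_m\leq n$ forces $s_m\leq n-2$), a combinatorial case analysis yields $m+r\leq n$, i.e., $T+B\leq n$.

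For the equality case $T+B=n$, the above inequalities must be tight: $s_1=0$, the source configuration saturates, and each arrow's reach covers exactly to the next source (or to $n$ at the terminal source). The telescoping composition
\[
(0,0;s_2,0)\circ(s_2,0;s_3,0)\circ\cdots\circ(s_m,0;n,0) = (0,0;n,0),
\]
each factor either a bottom arrow in $S$ or the restriction of a top arrow in $S$, then lies in $\langle S\rangle$.

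For (2) and (3), the argument is parallel, analysing top/diagonal pairs for (2) and bottom/diagonal pairs for (3); the analogues of the two structural observations above hold because diagonal arrows restrict to both smaller diagonals and bottom arrows. The bound $n+1$ arises because diagonal arrows may be sourced at column $n$ (namely the vertical diagonal $(n,0;n,1)$), providing one extra source slot. The equality chains produce $(0,0;n,1)$ via a diagonal composed with intermediate top arrows for (2), and $(0,0;n,0)$ via bottom arrows composed with a diagonal's restriction to the bottom row for (3). The principal technical obstacle throughout is the gap-and-reach analysis: carefully ruling out every downstream cascade of reaches that would make the critical arrow at a shared column redundant, which requires detailed casework near the terminal source.
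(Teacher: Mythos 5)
Your proposal takes a genuinely different route from the paper. The paper proves all three parts by induction on $n$: it peels off the leftmost column to pass from $[n]\times[1]$ to $[n-1]\times[1]$, uses \cref{lemma:farleft} to bound how many arrows can start at the far left, and for the equality case it finds the minimal column $i$ from which the far right can be reached and splits $\mathcal{U}$ into two independent generating sets $\mathcal{U}_1,\mathcal{U}_2$ on $[i-1]\times[1]$ and $[n-i]\times[1]$, deriving $n\leqslant n-1$, a contradiction. Crucially the inductive hypothesis includes the equality statement, which is what makes the $|\mathcal{U}'|=n-1$ subcase go through. Your proposal instead attempts a direct, non-inductive source-counting argument.

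However, there is a genuine gap. The entire content of part (1) is compressed into the sentence ``Tracking how the reaches $R_j$ of later sources \dots chain together, \dots a combinatorial case analysis yields $m+r\leqslant n$.'' This is precisely the step that needs to be proven, and it is not proven. The two structural observations you do establish (distinct source columns within each type; $b_k>a_k$ at shared columns) are correct but insufficient: minimality of $S$ is a \emph{global} condition, and an arrow at a shared column can be made redundant by a chain of restrictions and compositions passing through several later sources, not just by the single competing arrow at the same column. For instance, with $n=3$, the set $S=\{(0,1;1,1),(0,0;2,0),(1,1;2,1),(2,0;3,0)\}$ has $m+r=4>n$, satisfies all of your local constraints, yet is not minimal because $(0,0;2,0)$ is generated by restricting and composing the two top arrows. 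So the local constraints you exploit do not themselves force $m+r\leqslant n$; the claim only holds because of a genuinely global obstruction that your ``gap-and-reach analysis'' would need to formalize but does not. The same issue recurs in your equality argument (you assume a telescoping chain structure without establishing it) and in your one-line dismissal of (2) and (3), where the paper in fact needs a nontrivial two-case analysis involving the vertical arrow $(i-1,0;i-1,1)$ being adjoinable to $\mathcal{U}_1$. To repair the proposal you would need to actually carry out the casework, or better, switch to an inductive strengthening in the style of the paper's proof, where the splitting argument does the global bookkeeping for you.
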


\begin{proof}
    We use proof by induction for each part. The base case is immediate, as when $n=0$ there is a single possible arrow in $\Sub(C_{p^0q})$, which is vertical, hence diagonal.

    \begin{enumerate}[leftmargin=*]
        \item Let $\mathcal{U}$ be the set of top and bottom arrows in $S$, so that $\mathcal{U}$ is a minimal generating set for some transfer system on $C_{p^nq}.$ Assume the claim is true for $\mathsf{Tr}(C_{p^{n-1}q})$.
        
        Let $\mathcal{U}'$ be the set obtained from $\mathcal{U}$ by removing the arrows starting at the far left. The lattice obtained by removing the elements $(0, 0), (0, 1) \in [n] \times [1]$ is isomorphic to $[n-1] \times [1]$. Thus the set $\mathcal{U}'$ gives a minimal generating set for a transfer system on $C_{p^{n-1}q},$ so that $|\mathcal{U}'| \leqslant n-1$ by the induction hypothesis. 

        If $|\mathcal{U}'| \leqslant n-2,$ then since $\mathcal{U}$ can be obtained from $\mathcal{U}'$ by adding top and/or bottom arrows starting on the far left, \cref{lemma:farleft} gives $|\mathcal{U}|\leqslant |\mathcal{U}'|+2\leqslant n.$

        If $|\mathcal{U}'| = n-1,$ then by the induction hypothesis, $\langle\mathcal{U}'\rangle$ (and therefore $\langle\mathcal{U}\rangle$) contains the arrow $(1,0;n,0).$ Again by \cref{lemma:farleft}, $\mathcal{U}$ can contain at most one top and one bottom arrow, starting at the far left, in addition to the arrows in $\mathcal{U}'$. However, if $\mathcal{U}$ contains a top arrow from the left, by restriction $\langle\mathcal{U}\rangle$ must contain the arrow $(0,0;1,0)$ (implied by the top arrow), and each arrow of the form $(1,0;i,0)$ (implied by $(1,0;n,0)$). Then by transitivity, $\langle\mathcal{U}\rangle$ must contain $(0,0;i,0)$ for each $i$. See \cref{fig:farlefttopbottom}.

        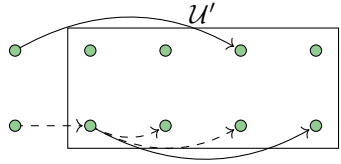
\begin{figure}[h]
            \centering
            \begin{tikzpicture}
                \gridbox{1}{0}{4}{1}
                \draw[bend right] (1,0) to (4,0);
                \draw[bend right,dashed] (1,0) to (3,0);
                \draw[bend right,dashed] (1,0) to (2,0);
                \draw[bend left] (0,1) to (3,1);
                \draw[dashed] (0,0) to (1,0);
                \grid{4}{1};
                \node at (2.5,1.5) {$\mathcal{U}'$};
        \end{tikzpicture}
        \caption{$\mathcal{U}$ cannot have both a top and bottom arrow from the far left if $|\mathcal{U}'|=n-1.$}
        \label{fig:farlefttopbottom}
        \end{figure}

        Thus if $\mathcal{U}$ contains a top leftmost arrow, this arrow, together with the arrows in $\mathcal{U}',$ generate all bottom leftmost arrows. Since $\mathcal{U}$ is a minimal generating set, it therefore cannot contain a bottom leftmost arrow. In other words, if $\mathcal{U}$ contains a top leftmost arrow, it cannot contain a bottom leftmost arrow, so $\mathcal{U}$ can contain at most one leftmost arrow in total, which gives $|\mathcal{U}|\leqslant |\mathcal{U}'|+1\leqslant n.$
        
        It remains to show that if $|\mathcal{U}|=n$, then $\langle S\rangle$ contains $(0,0;n,0).$ The argument above shows this is true if $|\mathcal{U}'|=n-1,$ but the following argument applies in either case.
        
        If no arrow in $\langle\mathcal{U}\rangle$ ends at the far right, then $\mathcal{U}$ is a minimal generating set for a transfer system on $C_{p^{n-1}q}$ containing $n$ top or bottom arrows, contradicting the induction hypothesis. So, let $i$ be minimal with the property that $\langle\mathcal{U}\rangle$ contains an arrow of the form $(i,0;n,0).$ If $i=0,$ then we are done, so assume $i>0.$
            
        First notice that no arrow in $\mathcal{U}$ can start before $i$ and end at or after $i$. Indeed if this were the case, in $\langle\mathcal{U}\rangle,$ there would be an arrow of the form $(j,0;k,0)$ with $j<i\leqslant k,$. This would imply $(j,0;i,0)$ by restriction, which in turn would imply $(j,0;n,0)$ by transitivity, contradicting the minimality of $i$. See \cref{fig:farlefttopbottom2}.

        \begin{figure}[h]
            \centering
            \begin{tikzpicture}
                \gridbox{0}{0}{1}{1}
                \gridbox{2}{0}{4}{1}
                \node at (0.5,1.75) {$\mathcal{U}_1$};
                \node at (3,1.75) {$\mathcal{U}_2$};
                \node[above] at (2,0) {$i$};
                \node[above] at (1,0) {$j$};
                \node[above] at (3,0) {$k$};
                \node[above] at (4,0) {$n$};
                \draw [bend right, dashed] (2,0) to (4,0);
                \draw [bend left] (1,1) to (3,1);
                \draw [bend right, dashed] (1,0) to (3,0);
                \draw [bend right, dashed] (1,0) to (2,0);
                \draw [bend right, dashed] (1,0) to (4,0);
                \grid{4}{1};
        \end{tikzpicture}
        \caption{An arrow in $\mathcal{U}$ starting before $i$ and ending at or after $i$ contradicts the minimality of $i$.}
        \label{fig:farlefttopbottom2}
        \end{figure}
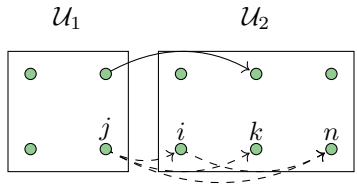

        So $\mathcal{U}$ gives minimal generating sets $\mathcal{U}_1$ and $\mathcal{U}_2$ on $C_{p^{i-1}q}$ and $C_{p^{n-i}q},$ respectively. We have
        \[
        n = |\mathcal{U}| = |\mathcal{U}_1| + |\mathcal{U}_2| \leqslant i-1+n-i=n-1.
        \]
        This is a contradiction, so we must have $i=0$; thus $\langle\mathcal{U}\rangle$ (and therefore $\langle S\rangle$) contains $(0,0;n,0)$, as claimed.
        
        \item Let $\mathcal{U}$ be the set of top and diagonal arrows in $S$. The argument above works identically to show that $|\mathcal{U}|\leqslant n+1$, with the $+1$ coming from the base case.
        
        There are some subtleties in showing $\langle S\rangle$ must contain $(0,0;n,1)$ if $|\mathcal{U}|=n+1.$ First note there must be an $i$ such that there is a top or diagonal arrow starting at $(i,0)$ or $(i,1)$ and ending at $(n,1)$, or else this would give a transfer system on $C_{p^{n-1}q}$ with $n+1$ top or diagonal arrows. 
        
        Let $i$ be minimal with this property, and assume $i>0.$ As before, this gives minimal generating sets $\mathcal{U}_1$ and $\mathcal{U}_2$ for transfer systems on $C_{p^{i-1}q}$ and $C_{p^{n-i}q},$ respectively. By the induction hypothesis we have $|\mathcal{U}_1|\leqslant i$ and $|\mathcal{U}_2|\leqslant n-i+1.$ We consider two cases:

        \textbf{Case 1}. If $\mathcal{U}_2$ doesn't contain any diagonal arrows, then $\mathcal{U}_2$ contains only top arrows, so $|\mathcal{U}_2|\leqslant n-i,$ so that 
        \[
        n+1=|\mathcal{U}_1|+|\mathcal{U}_2|\leqslant i+n-i=n,
        \]
        a contradiction.
        
        \textbf{Case 2}. If $\mathcal{U}_2$ does contain a diagonal, it forces the arrow $\alpha=(i-1,0;i-1,1).$ Therefore $\mathcal{U}_1$ cannot contain any arrow forced by $\mathcal{U}_1\cup\{\alpha\}$. Furthermore, $\alpha$ cannot be forced by anything in $\mathcal{U}_1.$ So, $\alpha$ can be added to $\mathcal{U}_1$ while maintaining minimality (of $\mathcal{U}_1)$, which means $|\mathcal{U}_1|\leqslant i-1$.
        \begin{figure}[h]
            \centering
            \begin{tikzpicture}
                \gridbox{0}{0}{1}{1}
                \gridbox{2}{0}{4}{1}
                \node at (0.5,1.75) {$\mathcal{U}_1$};
                \node at (3,1.75) {$\mathcal{U}_2$};
                \draw (3,0) to (4,1);
                \node[above] at (2,0) {$i$};
                \draw[dashed] (1,0) -- (1,1) node [midway, left] {$\alpha$};
                \grid{4}{1};
        \end{tikzpicture}
        \caption{The arrow $\alpha$ can be added to $\mathcal{U}_1$.}
        \end{figure}
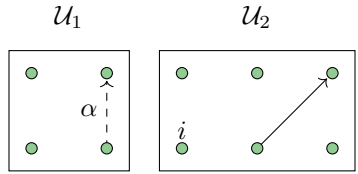

        Thus we have 
        \[
        n+1=|\mathcal{U}_1|+|\mathcal{U}_2|\leqslant i-1+n-i+1=n,
        \]
        again giving a contradiction.

        Thus we must have $i=0,$ so there must be an arrow of the form $(0,0;n,1)$ or $(0,1;n,1)$ in $\langle \mathcal{S}\rangle$. In the second case, notice that since $|\mathcal{U}|=n+1,$ $\mathcal{U}$ must contain a diagonal, but any diagonal forces the arrow $(0,0;0,1).$ This arrow together with $(0,1;n,1)$ forces $(0,0;n,1)$ by transitivity, completing the proof.

        \item For the set of bottom or diagonal arrows in $S$, the arguments used for the top or diagonal arrows work identically.\qedhere
    \end{enumerate}
\end{proof}

\begin{corollary}
\label{corollary-n1bound}
Let $S$ be a minimal generating set for a transfer system $\mathsf{T} \in \mathsf{Tr}(C_{p^nq})$. Then 
\[
|S|\leqslant \begin{cases}
    3k+1&\text{if } n=2k\\
    3k+2&\text{if } n=2k+1.
\end{cases}
\]
\end{corollary}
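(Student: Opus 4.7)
The plan is to observe that the bound follows almost immediately from \cref{lemma:typepairs} by a simple averaging argument, after partitioning $S$ into the three types.

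First I would note that every non-identity element of $\mathbb{I}(\Sub(C_{p^nq}))$ falls into exactly one of the three classes defined before \cref{lemma:farleft}: top arrows, bottom arrows, or diagonal arrows. So if we let $T$, $B$, and $D$ be the number of top, bottom, and diagonal arrows in $S$ respectively, we have the decomposition $|S| = T + B + D$.

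Next I would invoke all three bounds from \cref{lemma:typepairs} simultaneously:
\[
T + B \leqslant n, \qquad T + D \leqslant n+1, \qquad B + D \leqslant n+1.
\]
Summing these three inequalities yields $2(T+B+D) \leqslant 3n+2$, and hence
\[
|S| = T+B+D \leqslant \left\lfloor \frac{3n+2}{2} \right\rfloor.
\]

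Finally I would unwrap the floor in each parity. When $n = 2k$, the right-hand side is exactly $(6k+2)/2 = 3k+1$. When $n = 2k+1$, it is $\lfloor (6k+5)/2 \rfloor = 3k+2$. This matches the two cases in the statement, so the bound is proved.

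There is essentially no obstacle here: the entire content is packaged in \cref{lemma:typepairs}, and the corollary is a one-line averaging consequence once we notice that the three types partition $S$. The only thing worth remarking is that the equality clauses of \cref{lemma:typepairs} are not needed for this proof of the upper bound; they will however be the crucial ingredient when one later wants to realize equality and compute $\mathfrak{c}(C_{p^nq})$ exactly.
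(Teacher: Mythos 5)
Your proof is correct and is essentially identical to the paper's own argument: both partition $S$ into top, bottom, and diagonal arrows, sum the three pairwise bounds from \cref{lemma:typepairs} to get $2|S|\leqslant 3n+2$, and then resolve the floor by parity. The only cosmetic difference is that you write the bound as $\lfloor(3n+2)/2\rfloor$ while the paper writes $\lfloor 3n/2\rfloor+1$, which agree for all integers $n$.
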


\begin{proof}
    Let $T,\ B,$ and $D$ be the number of top, bottom, and diagonal arrows in $S$.  By \cref{lemma:typepairs} we have
    \[
    2\cdot|S|=(T+B)+(T+D)+(B+D)\leqslant n+(n+1)+(n+1)=3n+2,
    \]
    which gives
    \[
    |S|\leqslant\left\lfloor\frac{3n}{2}\right\rfloor+1=\begin{cases}
            3k+1&\text{if } n=2k\\
    3k+2&\text{if } n=2k+1
    \end{cases}
    \]
    as required.
\end{proof}

Now that we have proved an upper bound to the size of a minimal generating set it remains to prove that this upper bound can be realized.

\begin{lemma}
\label{lemma-n1rainbows}
    For each $n \in \NN$ there exists a partial rainbow on $\Sub(C_{p^n q})$ of size $3k+1$ if $n=2k$ or size $3k+2$ if $n=2k+1$.
\end{lemma}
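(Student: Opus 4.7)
The plan is to exhibit an explicit partial rainbow of the claimed size. Because $C_{p^nq}$ is abelian, condition~(2) of \cref{definition-partialrainbow} is vacuous, so the task reduces to choosing a strictly nested family of arcs in $\{0,1,\ldots,n+1\}$ whose $P$-preimages collectively contain the correct number of arrows.

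The first step is to record the fiber sizes of the rank map $P\colon\Sub(C_{p^nq})\to\NN$ that sends $C_{p^iq^j}$ to $i+j$: these are $P^{-1}(0)=\{(0,0)\}$, $P^{-1}(n+1)=\{(n,1)\}$, and $P^{-1}(a)=\{(a,0),(a-1,1)\}$ for $1\leqslant a\leqslant n$. A quick check of comparability in $[n]\times[1]$ then shows that, for an arc $(a,b)$ with $0\leqslant a<b\leqslant n+1$, the number of arrows in $\mathbb{I}(\Sub(C_{p^nq}))$ with $P$-image $(a,b)$ is $1$ when $(a,b)=(0,n+1)$, equal to $2$ when exactly one of $a=0$ or $b=n+1$ holds, and equal to $3$ when $1\leqslant a<b\leqslant n$ (the missing fourth pair $((a-1,1),(b,0))$ being incomparable).

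The second step is to take the rainbow $R=\{(i,\,2k+1-i):0\leqslant i\leqslant k\}$, which is strictly nested since $0<1<\cdots<k<k+1<\cdots<2k+1\leqslant n+1$, and to set $S=\{(K,H)\in\mathbb{I}(\Sub(C_{p^nq})):(P(K),P(H))\in R\}$. Then $P(S)=R$ is a rainbow, so $S$ is a partial rainbow by \cref{definition-partialrainbow}. Summing the fiber counts: for $n=2k$ the outer arc is $(0,n+1)$ and contributes $1$, while each $(i,2k+1-i)$ with $1\leqslant i\leqslant k$ satisfies $1\leqslant i<2k+1-i\leqslant n$ and contributes $3$, giving $1+3k=3k+1$; for $n=2k+1$ the outer arc $(0,2k+1)$ now has $b=n$ rather than $b=n+1$ and therefore contributes $2$, while the $k$ inner arcs again contribute $3$ each, giving $2+3k=3k+2$.

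I do not foresee a real obstacle: the construction is direct, and the only delicate point is that the outer arc must be positioned at the boundary in the correct way. The shift from $b_0=n+1$ in the even case to $b_0=n$ in the odd case is precisely what secures the extra $+1$, and is made possible by the additional available rank when $n$ is odd, which comfortably accommodates the $k+1$ strictly nested arcs.
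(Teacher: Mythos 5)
Your proof is correct and follows essentially the same approach as the paper: exhibit a specific rainbow in $\mathbb{I}(\NN)$, take its $P$-preimage, and count fiber sizes. The only cosmetic difference is your choice of rainbow in the odd case $n=2k+1$: you use $\{(i,\,2k+1-i):0\leqslant i\leqslant k\}$ with outer arc $(0,n)$, while the paper uses $\{(1,n+1),\dots,(k+1,k+2)\}$ with outer arc $(1,n+1)$; both are translates of each other and yield the same count $2+3k$.
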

\begin{proof}
    The lattice $\Sub(C_{p^n q})$ is isomorphic to $[n]\times[1]$, so its image under the map $P \colon \Sub(G) \to \NN$ is $[n+1]$. If $n=2k$, consider the rainbow $R$ on $[n+1]$ given by the pairs $\{(0, n+1), \dots, (k, k +1)\}$. If $n=2k + 1$ instead, consider the rainbow $R'$ given by the pairs $\{(1, n+1), \dots, (k+1, k+2)\}$.

    The preimages $S$ and $S'$ in $\Sub(C_{p^n q})$ of $R$ and $R'$ are partial rainbows as $C_{p^n q}$ is abelian. Each of the pairs on the rainbows $R$ and $R'$ has three pairs in its preimage in $\Sub(C_{p^n q})$, except for $(0,n+1)$ and $(1, n+1)$ which have $1$ and $2$ respectively. Therefore $\lvert S \rvert = 3k+1$ and $\lvert S' \rvert = 3k+2$.
\end{proof}

\begin{corollary}\label{cor:Cpnq-complexity}
    The complexity of $C_{p^n q}$ is $3k+1$ if $n=2k$ or $3k+2$ if $n=2k+1$.
\end{corollary}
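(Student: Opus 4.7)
The plan is to simply combine the two preceding results: Corollary \ref{corollary-n1bound} supplies the upper bound on $|S|$ for every minimal generating set $S$ of a transfer system on $C_{p^nq}$, while Lemma \ref{lemma-n1rainbows} produces partial rainbows whose sizes match this bound exactly. Since $\mathfrak{c}(C_{p^nq})$ is by definition the supremum of $\mathfrak{m}(\mathsf{T})$ over $\mathsf{T} \in \mathsf{Tr}(C_{p^nq})$, both inequalities together pin down the value.

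In more detail, I would first invoke Corollary \ref{corollary-n1bound} to obtain
\[
\mathfrak{c}(C_{p^nq}) \leqslant \begin{cases} 3k+1 & n=2k,\\ 3k+2 & n=2k+1,\end{cases}
\]
since every $\mathsf{T} \in \mathsf{Tr}(C_{p^nq})$ has a minimal generating set of size $\mathfrak{m}(\mathsf{T})$ and this size is bounded above by the stated quantity.

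For the reverse inequality, I would appeal to Lemma \ref{lemma-n1rainbows}, which exhibits a partial rainbow $S$ (or $S'$) on $C_{p^nq}$ of the target cardinality. By Proposition \ref{proposition-rainbowsareminimal}, any partial rainbow is automatically a minimal generating set for the transfer system it generates, so $\mathfrak{m}(\langle S \rangle)$ equals $|S|$. Hence $\mathfrak{c}(C_{p^nq}) \geqslant |S|$, which matches the upper bound and closes the argument.

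There is no genuine obstacle here; the entire technical content has already been discharged in the preceding lemmas. The only thing worth being careful about is the matching between the proof's case split ($n$ even versus odd) and the two explicit rainbows $S,S'$ constructed in Lemma \ref{lemma-n1rainbows}, together with noting that $\mathfrak{m}$ is well defined (Corollary \ref{cor:well-defined-map}) so that choosing any minimal generating set gives the correct value.
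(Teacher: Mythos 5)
Your argument is exactly the paper's: combine the upper bound from Corollary~\ref{corollary-n1bound} with the realizing partial rainbows of Lemma~\ref{lemma-n1rainbows}, using Proposition~\ref{proposition-rainbowsareminimal} to guarantee those rainbows are minimal generating sets. Correct and complete.
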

\begin{proof}
    There are partial rainbows of these sizes by \cref{lemma-n1rainbows}, which give minimal generating sets by \cref{proposition-rainbowsareminimal}. These are the upper bounds for sizes of minimal generating sets by \cref{corollary-n1bound}.
\end{proof}

\section{\texorpdfstring{Complexity of $C_{p^n q^m}$}{Complexity of C\_p\^nq\^m}}\label{sec:grids2}

In the previous section we computed the complexity of groups of the form $G=C_{p^nq}$, and in particular we proved that this complexity is always realized by a rainbow. In this section we will shift our attention to groups of the form $G = C_{p^nq^m}$. In this instance we immediately see new behavior:

\begin{example}\label{ex:notarainbow}

Let $G=C_{p^2q^2}$. Then one can see a maximally-sized partial rainbow in \cref{fig:maxrainbowp2q2}. However, it can be computed that the complexity of $G$ is 7, and this is realized uniquely by \cref{fig:doublerainbows}. That is, the complexity is not realized by a partial rainbow.

\begin{figure}[h!]
\centering
\begin{minipage}{.495\textwidth}
      \centering
    \[\begin{gathered}
\begin{tikzpicture}[scale= 0.5]
\node[fill=dark-green!50,circle,draw,inner sep = 0pt, outer sep = 0pt, minimum size=1.5mm] (1) at (0,0) {};
\node[fill=dark-green!50,circle,draw,inner sep = 0pt, outer sep = 0pt, minimum size=1.5mm] (p) at (2,0) {};
\node[fill=dark-green!50,circle,draw,inner sep = 0pt, outer sep = 0pt, minimum size=1.5mm] (p2) at (4,0) {};
\node[fill=dark-green!50,circle,draw,inner sep = 0pt, outer sep = 0pt, minimum size=1.5mm] (q) at (0,2) {};
\node[fill=dark-green!50,circle,draw,inner sep = 0pt, outer sep = 0pt, minimum size=1.5mm] (qp) at (2,2) {};
\node[fill=dark-green!50,circle,draw,inner sep = 0pt, outer sep = 0pt, minimum size=1.5mm] (qp2) at (4,2) {};
\node[fill=dark-green!50,circle,draw,inner sep = 0pt, outer sep = 0pt, minimum size=1.5mm] (q2) at (0,4) {};
\node[fill=dark-green!50,circle,draw,inner sep = 0pt, outer sep = 0pt, minimum size=1.5mm] (q2p) at (2,4) {};
\node[fill=dark-green!50,circle,draw,inner sep = 0pt, outer sep = 0pt, minimum size=1.5mm] (q2p2) at (4,4) {};
\draw[->, black!70] (1) edge[black!70, bend left=5] (q2p);
\draw[->, black!70] (1) edge[black!70, bend right=5] (qp2);
\draw[->, black!70] (q) edge[black!70] (q2);
\draw[->, black!70] (q) edge[black!70] (qp);
\draw[->, black!70] (p) edge[black!70] (p2);
\draw[->, black!70] (p) edge[black!70] (qp);
\end{tikzpicture}
\end{gathered}\]
\captionof{figure}{A maximally sized rainbow for $G=C_{p^2q^2}$.}
    \label{fig:maxrainbowp2q2}
\end{minipage}
\begin{minipage}{.495\textwidth}
 \centering
    \[\begin{gathered}
\begin{tikzpicture}[scale= 0.5]
\node[fill=dark-green!50,circle,draw,inner sep = 0pt, outer sep = 0pt, minimum size=1.5mm] (1) at (0,0) {};
\node[fill=dark-green!50,circle,draw,inner sep = 0pt, outer sep = 0pt, minimum size=1.5mm] (p) at (2,0) {};
\node[fill=dark-green!50,circle,draw,inner sep = 0pt, outer sep = 0pt, minimum size=1.5mm] (p2) at (4,0) {};
\node[fill=dark-green!50,circle,draw,inner sep = 0pt, outer sep = 0pt, minimum size=1.5mm] (q) at (0,2) {};
\node[fill=dark-green!50,circle,draw,inner sep = 0pt, outer sep = 0pt, minimum size=1.5mm] (qp) at (2,2) {};
\node[fill=dark-green!50,circle,draw,inner sep = 0pt, outer sep = 0pt, minimum size=1.5mm] (qp2) at (4,2) {};
\node[fill=dark-green!50,circle,draw,inner sep = 0pt, outer sep = 0pt, minimum size=1.5mm] (q2) at (0,4) {};
\node[fill=dark-green!50,circle,draw,inner sep = 0pt, outer sep = 0pt, minimum size=1.5mm] (q2p) at (2,4) {};
\node[fill=dark-green!50,circle,draw,inner sep = 0pt, outer sep = 0pt, minimum size=1.5mm] (q2p2) at (4,4) {};
\draw[->, black!70] (q) edge[black!70] (q2p);
\draw[->, black!70] (q) edge[black!70, bend left] (qp2);
\draw[->, black!70] (1) edge[black!70, bend left = 10] (q2p2);
\draw[->, black!70] (p) edge[black!70, bend right] (q2p);
\draw[->, black!70] (p) edge[black!70] (qp2);
\draw[->, black!70] (p2) edge[rainbow-red] (qp2);
\draw[->, black!70] (q2) edge[rainbow-red] (q2p);
\end{tikzpicture}
\end{gathered}\]
\captionof{figure}{A generating set that realizes the complexity for $G=C_{p^2q^2}$.}
    \label{fig:doublerainbows}
\end{minipage}
\end{figure}
\end{example}

Although \cref{ex:notarainbow} tells us that partial rainbows do not realize the complexity for some groups of the form $G = C_{p^nq^m}$, we will conjecture in this section that in all those exceptions the complexity is realized by a small modification of a specific kind of partial rainbows, which we will call  \emph{double rainbows}.

\begin{definition}
\label{definition-simpledouble}
    Given an interval $(C_{p^a q^x}, C_{p^b q^y}) \in \mathbb{I}(\Sub(C_{p^n q^m}))$, which we again denote $(a, x; b, y)$, its \emph{midpoint} is $(a+x+b+y)/2$. That is, the midpoint of the image of the interval under the map $P \colon \Sub(G) \to \NN$ of \cref{definition-rankmap}.
    
    Denote by $R_M \subset \mathbb{I}(\Sub(C_{p^n q^m}))$  the set of all intervals of $\Sub(C_{p^n q^m})$ with midpoint $M$. 
    \begin{itemize}
        \item If $n+m$ is odd we say that $R_{(n+m)/2}$ is a \emph{simple maximal partial rainbow} for $C_{p^n q^m}$. 
        \item If $n+m$ is even we say that
        \begin{itemize}
            \item $R_{(n+m-1)/2}$ (or $R_{(n+m+1)/2}$) is a \emph{simple maximal partial rainbow} for 
        $C_{p^n q^m}$
        \item $R_{(n+m)/2}$ is a \emph{double maximal partial rainbow} for $C_{p^n q^m}$.
        \end{itemize}
    \end{itemize}
    Note that these are indeed partial rainbows in the sense of \cref{definition-partialrainbow}, and they are maximal among partial rainbows for $C_{p^n q^m}$. We will generally just write \textit{simple rainbows} and \textit{double rainbows} for simple maximal partial rainbows and double maximal partial rainbows.
\end{definition}

\begin{example}
    In the case of $G=C_{p^2q^2}$ the simple maximal rainbow $R_{3/2}$ is exactly given in \cref{fig:maxrainbowp2q2} (the other simple maximal rainbow $R_{5/2}$ is the obvious symmetric reflection).

    The maximal double rainbow will consist of those intervals $(a, x; b, y)$ with $(a+x+b+y)/2 = 2$. These intervals are those displayed in \cref{fig:doublerainbows}, excluding the two red arrows. We are able to add two arrows to this double rainbow, yielding a minimal generating set, and this set realizes the complexity.
\end{example}

The phenomena about being able to add two independent arrows to the simple double rainbow is not particular to the specific case of $n=m=2$ as the next result shows.

\begin{lemma}\label{lem:addingtodouble}
    Let $n \geqslant m \geqslant 2$ and $n+m$ even. Then there exists a minimal generating set of size two more than the size of $R_{(n+m)/2}$.
\end{lemma}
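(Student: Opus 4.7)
My plan is to exhibit an explicit minimal generating set of the form $S = R_{(n+m)/2} \cup \{\alpha_1, \alpha_2\}$, obtained by adding two ``corner'' arrows to the double rainbow. Motivated by the $n=m=2$ case of \cref{ex:notarainbow}, when $n = m$ I would take $\alpha_1 = ((n,0),(n,1))$ and $\alpha_2 = ((0,m),(1,m))$: the single-step vertical and horizontal arrows anchored at the opposite corner positions $(n,0)$ and $(0,m)$ of $[n]\times[m]$.

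Verification then proceeds in three steps. First, $\alpha_1, \alpha_2 \notin R_{(n+m)/2}$ follows from their $P$-sums $2n+1$ and $2m+1$, which differ from $n+m$. Second, to show $\alpha_i \notin \langle R_{(n+m)/2} \cup \{\alpha_j\}\rangle$ I would apply \cref{lemma-generatingedge}; since $C_{p^n q^m}$ is abelian, this reduces to ruling out a dominator, and every source appearing in $R_{(n+m)/2}$ has total exponent at most $(n+m)/2 - 1 < n$, preventing domination of $(n, 0)$, and similarly for $(0, m)$ when $n=m$; the incomparability of the sources $(n,0)$ and $(0,m)$ in $[n]\times[m]$ then handles the mutual independence of $\alpha_1$ and $\alpha_2$. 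Third, I would verify that no $s \in R_{(n+m)/2}$ becomes redundant in $S$. A dominator of $s$ inside $R_{(n+m)/2} \setminus \{s\}$ is impossible because two distinct elements of the double rainbow have $P$-arcs of identical midpoint, so any nesting would force equality of sources and targets as subgroups; hence the only potential dominator is $\alpha_1$ or $\alpha_2$, and I would trace directly that restrictions of $\alpha_i$ (which produce single-step vertical or horizontal arrows of $P$-sum $2a+1 \neq n+m$) and their compositions with the surviving $R_{(n+m)/2}$-elements never actually produce $s$.

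The main obstacle is the third step in the case $n > m$: the naive $\alpha_2 = ((0,m),(1,m))$ turns out to be already in $\langle R_{(n+m)/2}\rangle$ as the restriction of $((0,m),(n-m,m)) \in R_{(n+m)/2}$ along $L = (1, m)$, so the construction must be refined. I would therefore replace $\alpha_2$ by an arrow positioned further out, so that no element of $R_{(n+m)/2}$ can reach it by restriction: the replacement's source or target should lie outside the ``trapezoidal'' region swept by the arcs of $R_{(n+m)/2}$, for instance by requiring the first coordinate of the source to exceed $(n-m)/2 - 1$. Once such an $\alpha_2$ is fixed, verifying the analog of step three becomes the technical crux: it amounts to checking that restrictions of the modified $\alpha_2$, together with their compositions with the surviving $R_{(n+m)/2}$-elements, never produce a new arrow of $P$-sum equal to $n+m$. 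This is a finite arithmetic check case-by-case for a given $\alpha_2$, and the existence of a suitable pair $(\alpha_1, \alpha_2)$ follows from the fact that the lattice $[n]\times[m]$ has strictly more corner arrows disjoint from the double rainbow than arrows of $R_{(n+m)/2}$ that they could compromise.
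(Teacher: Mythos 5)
Your approach---adding two edges directly to the double rainbow $R = R_{(n+m)/2}$---is genuinely different from the paper's, which instead performs an \emph{exchange}: it deletes two (if $n=m$) or four (if $n>m$) edges from $R$ and replaces them with the four or six length-one edges incident to the two marked points $A = (\tfrac{n-m}{2}, m)$ and $B = (\tfrac{n+m}{2}, 0)$. The resulting set is nearly a partial rainbow (its image under $P$ consists of strictly nested arcs except for three arcs around the midpoint $\tfrac{n+m}{2}$), so minimality follows by adapting \cref{proposition-rainbowsareminimal}. For $n = m$ your direct-addition version does work: by \cref{lemma-generatingedge} the only $s \in R$ that could be dominated by $\alpha_1 = ((n,0),(n,1))$ or $\alpha_2 = ((0,n),(1,n))$ are $(n-1,0;n,1)$ and $(0,n-1;1,n)$ respectively, and one can check directly that neither is generated by the remainder of $R\cup\{\alpha_1,\alpha_2\}$---though the verification is more delicate than your sketch lets on, since compositions through nearby vertices must be ruled out one by one.

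For $n > m$, however, your construction fails for a reason you do not identify and which no choice of $\alpha_2$ can repair. Write $b_0 = \tfrac{n+m}{2}$; since $n > m$ and $n+m$ is even, $b_0 + 1 \leqslant n$. Keeping $\alpha_1 = ((n,0),(n,1))$ already destroys minimality: the length-one arrow $((b_0,0),(b_0,1))$ is a restriction of $\alpha_1$ along $L = (b_0,1) \leqslant (n,1)$, while $((b_0-1,0),(b_0,0))$ is a restriction of $((b_0-1,0),(b_0+1,0)) \in R$ along $L=(b_0,0)$. Their composite is $((b_0-1,0),(b_0,1))$, whose $P$-sum is $2b_0 = n+m$, so it lies in $R$; it therefore becomes redundant the moment $\alpha_1$ is added, and $R \cup \{\alpha_1,\alpha_2\}$ is never a minimal generating set for $n>m$. (For $n=m$ the culprit $((b_0-1,0),(b_0+1,0))$ does not exist since $b_0+1 = n+1 > n$, which is exactly why your construction survives in that case.) This shows the paper's exchange move is not cosmetic: deleting both $((b_0-1,0),(b_0+1,0))$ and $((b_0-1,0),(b_0,1))$ before inserting the short edges around $B$ is precisely what prevents the short edges from making leftover members of $R$ redundant.
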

\begin{proof}
    Let $R=R_{(n+m)/2}$. On the grid $[n] \times [m]$ we mark the two points $A=(n-\frac{n+m}{2}, m)$ and $B=(\frac{n+m}{2},0)$. Let $N$ consist of all edges of length one that start or end in $A$ or $B$. $N$ contains four edges if $n=m$ and six edges if $n>m$. If $n=m$ there are two edges of length two in $R$ that can be obtained by composing edges in $N$, while if $n>m$ there are four such composed edges. Let $S$ denote $R$ with these edges that can be obtained from $N$ removed. See \cref{fig:addingtwo} for an example.
    
    We want to prove that $S \cup N$ is a minimal generating set. This set is almost a partial rainbow; its projection under the map $P \colon \Sub(G) \to \NN$ consist of strictly nested arcs, except for the arcs $(\frac{n+m}{2}-1, \frac{n+m}{2})$, $(\frac{n+m}{2}, \frac{n+m}{2}+1)$, and $(\frac{n+m}{2}-1, \frac{n+m}{2}+1)$. Due to this, we will be able to use most, but not all, of the argument given in \cref{proposition-rainbowsareminimal} for partial rainbows.

    Assume that there is an edge $a=(K,H) \in S$ with $a \in \langle (S \cup N)\setminus a \rangle$. Then by \cref{lemma-generatingedge} there must exist an edge $b=(K',H')\in (S \cup N)\setminus a$ with $K \leqslant K', H \leqslant H'$. Such an edge $b$ cannot exist in $S$ because $S$ is a partial rainbow (and therefore minimal). If $b\in N,$ then the only possibilities for the edge $a$ are precisely those that we removed from $R$ to obtain $S$, contradicting $a\in S.$

    For the other case, assume that there is an edge $a=(K,H) \in N$ with $a \in \langle (S \cup N)\setminus a \rangle$. Since $a$ has length one, and the group is abelian, $a$ must be a restriction of an edge in $(S \cup N)\setminus a$. But we can see that this is impossible due to the way that we constructed $N$ and $S$.
\end{proof}
    \begin{figure}[h]
    \centering
    \[\begin{gathered}
\begin{tikzpicture}[scale= 0.85]
\node[fill=dark-green!50,circle,draw,inner sep = 0pt, outer sep = 0pt, minimum size=1.5mm] (1) at (0,0) {};
\node[fill=dark-green!50,circle,draw,inner sep = 0pt, outer sep = 0pt, minimum size=1.5mm] (p) at (2,0) {};
\node[fill=dark-green!50,circle,draw,inner sep = 0pt, outer sep = 0pt, minimum size=1.5mm] (p2) at (4,0) {};
\node[fill=dark-green!50,circle,draw,inner sep = 0pt, outer sep = 0pt, minimum size=1.5mm] (q) at (0,2) {};
\node[fill=dark-green!50,circle,draw,inner sep = 0pt, outer sep = 0pt, minimum size=1.5mm] (qp) at (2,2) {};
\node[fill=dark-green!50,circle,draw,inner sep = 0pt, outer sep = 0pt, minimum size=1.5mm] (qp2) at (4,2) {};
\node[fill=dark-green!50,circle,draw,inner sep = 0pt, outer sep = 0pt, minimum size=1.5mm] (q2) at (0,4) {};
\node[fill=dark-green!50,circle,draw,inner sep = 0pt, outer sep = 0pt, minimum size=1.5mm] (q2p) at (2,4) {};
\node[fill=dark-green!50,circle,draw,inner sep = 0pt, outer sep = 0pt, minimum size=1.5mm] (q2p2) at (4,4) {};
\node[fill=dark-green!50,circle,draw,inner sep = 0pt, outer sep = 0pt, minimum size=1.5mm] (p3) at (6,0) {};
\node[fill=dark-green!50,circle,draw,inner sep = 0pt, outer sep = 0pt, minimum size=1.5mm] (qp3) at (6,2) {};
\node[fill=dark-green!50,circle,draw,inner sep = 0pt, outer sep = 0pt, minimum size=1.5mm] (q2p3) at (6,4) {};
\node[fill=dark-green!50,circle,draw,inner sep = 0pt, outer sep = 0pt, minimum size=1.5mm] (p4) at (8,0) {};
\node[fill=dark-green!50,circle,draw,inner sep = 0pt, outer sep = 0pt, minimum size=1.5mm] (qp4) at (8,2) {};
\node[fill=dark-green!50,circle,draw,inner sep = 0pt, outer sep = 0pt, minimum size=1.5mm] (q2p4) at (8,4) {};
\draw[->, black!70] (q2) edge[bend left = 20,rainbow-red, dash dot] (q2p2);
\draw[->, black!70] (q) edge[bend left = 20,rainbow-blue] (qp4);
\draw[->, black!70] (q) edge[rainbow-blue] (q2p3);
\draw[->, black!70] (1) edge[rainbow-blue] (q2p4);
\draw[->, black!70] (p2) edge[bend left = 20,rainbow-red, dash dot] (p4);
\draw[->, black!70] (qp) edge[bend left = 20,rainbow-blue] (qp3);
\draw[->, black!70] (p) edge[rainbow-blue] (qp4);
\draw[->, black!70] (p2) edge[rainbow-blue] (q2p2);
\draw[->, black!70] (p) edge[rainbow-blue] (q2p3);
\draw[->, black!70] (qp) edge[rainbow-red, dash dot] (q2p2);
\draw[->, black!70] (p2) edge[rainbow-red, dash dot] (qp3);
\draw[->, black!70] (p2) edge[rainbow-green, dashed] (p3);
\draw[->, black!70] (p3) edge[rainbow-green, dashed] (p4);
\draw[->, black!70] (p3) edge[rainbow-green, dashed] (qp3);
\draw[->, black!70] (q2) edge[rainbow-green, dashed] (q2p);
\draw[->, black!70] (q2p) edge[rainbow-green, dashed] (q2p2);
\draw[->, black!70] (qp) edge[rainbow-green, dashed] (q2p);
\end{tikzpicture}
\end{gathered}\]
\caption{The case $n=4$, $m=2$. In red (dash-dotted) we see the four edges removed from $R_{(n+m)/2}$, in blue (solid) we see the rest of $R_{(n+m)/2}$, which we called $S$, and in olive (dashed) we see the six edges of $N$.}
    \label{fig:addingtwo}
\end{figure}
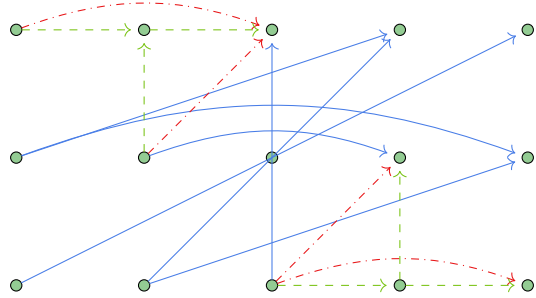

From computer calculations, along with the results presented in \cref{tab:valuesforsimple} we are led to conjecture the following:

\begin{conjecture}\label{conjecture:simpledouble}
    For $n \geqslant m \geqslant 2$, the complexity of $C_{p^n q^m}$ is realized by a simple rainbow, unless $n+m$ is even and $m=2$, in which case the complexity is realized by the set obtained from a double rainbow in \cref{lem:addingtodouble}.
\end{conjecture}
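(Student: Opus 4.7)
The plan is to adapt the classification-and-induction argument developed for $C_{p^nq}$ in \cref{sec:grids} to the full grid $\Sub(C_{p^nq^m}) \cong [n] \times [m]$. Fix a minimal generating set $S$ for some $\mathsf{T} \in \mathsf{Tr}(C_{p^nq^m})$. For each pair $0 \leqslant j \leqslant k \leqslant m$, partition $S$ by the $q$-coordinates of source and target: $A_{j,k}(S) \defeq \{(a, j; b, k) \in S\}$. The diagonal case $j < k$ plays the role of the diagonal arrows in \cref{sec:grids}, while the horizontal case $j = k$ plays the role of top or bottom arrows. A generalization of \cref{lemma:farleft} should show that at most one arrow of each $(j,k,\Delta)$-type, where $\Delta = b - a$ is the $p$-length, can originate in the leftmost column $\{(0, \ast)\}$, since any two competing candidates become comparable under restriction.

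For the upper bound, I would establish a system of pairwise inequalities of the form $|A_{j,k}(S)| + |A_{j',k'}(S)| \leqslant c_{(j,k),(j',k')}(n)$ by induction on $n$, mirroring \cref{lemma:typepairs}. The induction step separates arrows passing a given column from those that do not, and exploits the restriction rule to show that saturated configurations force ``long'' arrows of the form $(0, j; n, k)$; one then deletes the leftmost column to reduce to a minimal generating set for $C_{p^{n-1}q^m}$. Summing these inequalities with carefully chosen weights, as was done to extract \cref{corollary-n1bound} from \cref{lemma:typepairs}, should produce an upper bound matching $|R_{\lfloor(n+m)/2\rfloor}|$ in the generic case and $|R_{(n+m)/2}| + 2$ in the exceptional case $n + m$ even and $m = 2$. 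The matching lower bound is then immediate: \cref{proposition-rainbowsareminimal} certifies the simple rainbow as a minimal generating set realizing its size, and \cref{lem:addingtodouble} provides the corresponding augmented double rainbow in the exceptional case.

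The main obstacle is isolating why $m = 2$ is exceptional. The $C_{p^nq}$ proof worked with three arrow categories and three tight pairwise inequalities that summed cleanly; for general $m$ the number of categories is $\binom{m+2}{2}$, and selecting the correct linear combination of pairwise bounds becomes a nontrivial optimization problem. More seriously, the $+2$ augmentation in \cref{lem:addingtodouble} arises because the two length-one edges incident to the corner points $A$ and $B$ near the extreme rows cannot be generated by restriction from any longer arrow in the double rainbow $R_{(n+m)/2}$. For $m \geqslant 3$, any candidate corner edge at the second-to-extreme row is forced by restricting an interior arrow at higher midpoint, so no analogous augmentation survives as a minimal element. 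Making this intuition rigorous, namely showing simultaneously that the $m=2$ augmentation is tight and that it strictly fails for $m \geqslant 3$, is the principal technical challenge; I expect it to require a delicate analysis of which arrows adjacent to the top and bottom rows are absorbed by restriction from the interior of the midpoint slab.
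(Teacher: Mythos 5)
This statement is labeled a \emph{conjecture} in the paper: the authors establish only the lower bound (via \cref{lemma-n1rainbows}, \cref{proposition-rainbowsareminimal}, and \cref{lem:addingtodouble}) and explicitly state in \cref{rem:lowbounds} that proving the matching upper bound is open. So there is no proof in the paper to compare against, and your proposal must be judged as a candidate proof of an open statement.

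As a proof, your proposal has a genuine and openly acknowledged gap: the entire upper bound is left unestablished. The lower-bound half of the argument you cite is already in the paper and costs nothing new. What would make this a theorem rather than a conjecture is the claimed system of pairwise inequalities $|A_{j,k}(S)| + |A_{j',k'}(S)| \leqslant c_{(j,k),(j',k')}(n)$ together with a weighting that sums to $|R_{\lfloor (n+m)/2 \rfloor}|$ (or to $|R_{(n+m)/2}|+2$ when $n+m$ is even and $m=2$), and you neither state which inequalities hold, nor exhibit the weights, nor verify the combination is tight. You correctly identify that the number of arrow ``types'' blows up from $3$ to $\binom{m+2}{2}$ and that selecting the right linear combination ``becomes a nontrivial optimization problem''; that optimization \emph{is} the theorem. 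Likewise, the question of why exactly $m=2$ is exceptional---the fact that the two corner length-one edges near $A$ and $B$ cannot be absorbed by restriction when $m=2$ but can when $m \geqslant 3$---is named as ``the principal technical challenge'' and left entirely unresolved. In short, you have correctly sketched the only plausible route (generalize \cref{lemma:farleft} and \cref{lemma:typepairs} to $[n]\times[m]$ and close the gap between upper and lower bounds), but you have not executed a single new step of it, so the statement remains a conjecture. For what it's worth, the authors reached exactly the same impasse, which is why this appears as \cref{conjecture:simpledouble} rather than as a theorem.
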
 

The goal of the rest of this section is to compute the size of these simple and double rainbows. We will also see that the simple rainbows grow slightly larger as the small dimension $m$ increases, which is the reason why the unusual behavior in the above conjecture only appears when $m=2$.

\begin{definition}
    Let $n$ and $m$ be positive integers.
    \begin{itemize}
        \item  The \emph{simple rainbow number} $\SR(n, m)$ is the size of a simple maximal rainbow on $G = C_{p^n q^m}$.
        \item If $n, m \geqslant 2$ and $n+m$ is even, the \emph{double rainbow number} $\DR(n, m)$ is the size of the double maximal rainbow on $G = C_{p^n q^m}$.
    \end{itemize}

    Note that $\SR(n,m) = \SR(m,n)$ and $\DR(n,m) = \DR(m,n)$, as the subgroup lattices of $C_{p^n q^m}$ and $C_{p^m q^n}$ are isomorphic.
\end{definition}

\begin{table}
    \begin{minipage}{.45\linewidth}
      \centering
      \resizebox{\columnwidth}{!}{
        \begin{tabular}{|c|c|c|c|c|c|c|c|c|c|c|}
\hline
 \diagbox{$n$}{$m$}& 2 & 3 & 4 & 5 & 6 & 7 & 8 & 9 & 10 & 11 \\ \hline
2 & 6 & 10 & 12 & 16 &18 & 22 &24 & 28 & 30 & 34 \\ \hline
3 & 10 & 14& 20 &24 & 30 &34 & 40 &44 & 50 &54 \\ \hline
4 &12& 20 & 26 & 35 & 41 & 50 &56 & 65 & 71& 80 \\ \hline
5 & 16 & 24 & 35 &44 & 56 &65 & 77 & 86 & 98 & 107\\ \hline
6 &18 & 30 & 41 & 56 & 68& 84 & 96 & 112 &124 & 140 \\ \hline
7 & 22 & 34 & 50 & 65 & 84 & 100 & 120 & 136 & 156 & 172\\ \hline
8 &24 & 40 & 56& 77 & 96& 120 & 140 & 165 &185 & 210 \\ \hline
9 & 28 & 44& 65 &86& 112 & 136 & 165 & 190& 220 & 245 \\ \hline
10 & 30 & 50 & 71 & 98 & 124 & 156 & 185 & 220 & 250 & 286 \\ \hline
11 & 34 & 54 & 80 &107 & 140 & 172 & 210 & 245 & 286 & 322\\ \hline
\end{tabular}}
    \end{minipage}
    \begin{minipage}{0.1\linewidth}
        \centering
    \end{minipage}
    \begin{minipage}{.45\linewidth}
      \centering
        \resizebox{\columnwidth}{!}{
        \begin{tabular}{|c|c|c|c|c|c|c|c|c|c|c|}
\hline
\diagbox{$n$}{$m$} & 2 & 3 & 4 & 5 & 6 & 7 & 8 & 9 & 10 & 11 \\ \hline
2 & {5} &  \cellcolor{rainbow-blue!30}& {11} & \cellcolor{rainbow-blue!30}& {17} &  \cellcolor{rainbow-blue!30}& {23} &  \cellcolor{rainbow-blue!30}& {29} & \cellcolor{rainbow-blue!30}\\ \hline
3 &  \cellcolor{rainbow-blue!30}& {12} &  \cellcolor{rainbow-blue!30}& {22} &  \cellcolor{rainbow-blue!30}&{32} &  \cellcolor{rainbow-blue!30}& {42} &  \cellcolor{rainbow-blue!30}& {52} \\ \hline
4 & {11}& \cellcolor{rainbow-blue!30}& {24} & \cellcolor{rainbow-blue!30}& {39} & \cellcolor{rainbow-blue!30}& {54} & \cellcolor{rainbow-blue!30}& {69} & \cellcolor{rainbow-blue!30}\\ \hline
5 & \cellcolor{rainbow-blue!30}& {22} & \cellcolor{rainbow-blue!30}&{41} & \cellcolor{rainbow-blue!30}&{62} & \cellcolor{rainbow-blue!30}& {83} & \cellcolor{rainbow-blue!30}&{104}\\ \hline
6 &{17} & \cellcolor{rainbow-blue!30}& {39} & \cellcolor{rainbow-blue!30}& {65}& \cellcolor{rainbow-blue!30}& {93} & \cellcolor{rainbow-blue!30}& {121} & \cellcolor{rainbow-blue!30}\\ \hline
7 & \cellcolor{rainbow-blue!30}& {32} & \cellcolor{rainbow-blue!30}& {62} & \cellcolor{rainbow-blue!30}& {96} & \cellcolor{rainbow-blue!30}& {132} & \cellcolor{rainbow-blue!30}& {181} \\ \hline
8 & {23} & \cellcolor{rainbow-blue!30}& {54}& \cellcolor{rainbow-blue!30}& {93} & \cellcolor{rainbow-blue!30}& {136} & \cellcolor{rainbow-blue!30}&{181} & \cellcolor{rainbow-blue!30}\\ \hline
9 & \cellcolor{rainbow-blue!30}& {42}& \cellcolor{rainbow-blue!30}& {83}& \cellcolor{rainbow-blue!30}& {132} & \cellcolor{rainbow-blue!30}&{185}& \cellcolor{rainbow-blue!30}& {240} \\ \hline
10 & {29} & \cellcolor{rainbow-blue!30}&{69} & \cellcolor{rainbow-blue!30}& {121} & \cellcolor{rainbow-blue!30}& {181}& \cellcolor{rainbow-blue!30}& {245}& \cellcolor{rainbow-blue!30}\\ \hline
11 & \cellcolor{rainbow-blue!30}& {52} & \cellcolor{rainbow-blue!30}&{104} & \cellcolor{rainbow-blue!30}& {168} & \cellcolor{rainbow-blue!30}&{240} & \cellcolor{rainbow-blue!30}& {316}\\ \hline
\end{tabular}}
    \end{minipage} 
\caption{The table on the left represents the values of $\SR(n,m)$. The table on the right represents the values of $\DR(n,m)$ when $m+n$ is even}\label{tab:valuesforsimple}
\end{table}

We will begin by computing $\SR(n,m)$. By looking at \cref{tab:valuesforsimple} we see that on each row or column the numbers grow linearly, but with a different step according to whether $n+m$ is even or odd, just as in the case of $C_{p^nq}$ in \cref{sec:grids}. We will start by computing the simple rainbow numbers on the diagonal and subdiagonal, for which we require an intermediary lemma. By an \emph{arithmetic progression} of length $k$ we mean a sequence of integers $a, a+d, a+2d, \dots, a(k-1)d$ for $d$ some integer.

\begin{lemma}
\label{lemma-arithmetic}
    Let $n \geqslant 1$. Then the number of arithmetic progressions of length $3$ in the set $\{0,1, \cdots n\}$ is $\floor{\frac{n}{2}}\ceil{\frac{n}{2}}$.
\end{lemma}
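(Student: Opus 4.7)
The plan is to give a bijective parametrization of the 3-term arithmetic progressions in $\{0, 1, \dots, n\}$ in terms of their endpoints, and then count by cases on the parity of $n$.

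First I would observe that an arithmetic progression of length $3$ in $\{0, \dots, n\}$ is a triple $(a, b, c)$ with $0 \leqslant a < b < c \leqslant n$ and $2b = a + c$. Since the middle term $b$ is determined by $a$ and $c$, and exists as an integer if and only if $a \equiv c \pmod 2$, such progressions are in bijection with the $2$-element subsets $\{a, c\} \subseteq \{0, \dots, n\}$ whose elements have the same parity. So if $E$ denotes the number of even integers in $\{0, \dots, n\}$ and $O$ the number of odd integers, the count of $3$-APs is exactly $\binom{E}{2} + \binom{O}{2}$.

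Next I would finish with a quick case analysis. When $n = 2k$, we have $E = k+1$ and $O = k$, giving
\[
\binom{k+1}{2} + \binom{k}{2} = \tfrac{k(k+1)}{2} + \tfrac{k(k-1)}{2} = k^2 = \floor*{\tfrac{n}{2}} \ceil*{\tfrac{n}{2}}.
\]
When $n = 2k+1$, we have $E = O = k+1$, giving
\[
2\binom{k+1}{2} = k(k+1) = \floor*{\tfrac{n}{2}} \ceil*{\tfrac{n}{2}}.
\]
This is a direct elementary computation; there is no substantive obstacle, the only subtlety being to recognize that the condition $a \equiv c \pmod 2$ is both necessary and sufficient for the midpoint to be an integer (and automatically to lie in the set, since it lies between $a$ and $c$).
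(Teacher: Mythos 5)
Your proof is correct, and it takes a genuinely different route from the paper's. The paper counts by summing over the common difference $d \geqslant 1$: for each $d$ there are $n+1-2d$ progressions, and the sum over $1 \leqslant d \leqslant \lfloor n/2 \rfloor$ telescopes to $\lfloor n/2 \rfloor (n - \lfloor n/2 \rfloor) = \lfloor n/2 \rfloor \lceil n/2 \rceil$ (the paper's displayed sum starts at $d=0$, which looks like a typo, since its algebraic manipulation only works if the $d=0$ term is omitted). You instead parametrize a progression by its endpoints $\{a, c\}$ and observe that the middle term is an integer iff $a \equiv c \pmod 2$, reducing the count to $\binom{E}{2} + \binom{O}{2}$ with $E$, $O$ the numbers of even and odd elements of $\{0, \dots, n\}$. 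Your route is more bijective and arguably cleaner; the paper's is a direct sum over $d$. Both are elementary and reach the same formula. One small remark: you implicitly count only strictly increasing (equivalently, 3-element subset) progressions, which is indeed what the paper intends — the lemma is applied to count 3-element subsets of $A(n)$ forming an arithmetic progression in \cref{lemma:singlecount} — but it is worth making this reading explicit since the paper's preceding definition of arithmetic progression allows arbitrary integer common difference.
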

\begin{proof}
    Let $d$ be the difference in the arithmetic progression. Then we see that there are $n+1-2d$ arithmetic progressions in our set with the given $d$. Next, note that the largest $d$ that we can have is $\floor{\frac{n}{2}}$ else we would land outside of our set.

    Therefore the total number of arithmetic progressions of length $3$ can be computed as
    \[\sum_{d=0}^{\floor{\frac{n}{2}}} (n+1-2d) = \floor{\frac{n}{2}} (n+1) - 2 \sum_{d=0}^{\floor{\frac{n}{2}}} d = \floor{\frac{n}{2}} (n+1) - \floor{\frac{n}{2}}(\floor{\frac{n}{2}}+1) = \floor{\frac{n}{2}}\ceil{\frac{n}{2}}.\qedhere\]
\end{proof}

\begin{lemma}\label{lemma:singlecount}
Let $n \geqslant 2$. Then
    \[
        \SR(n+1,n) = \binom{n+3}{3}.
    \]
    and
    \[
        \SR(n,n) = \binom{n+3}{3}-\floor*{\frac{n+2}{2}} \ceil*{\frac{n+2}{2}}.
    \]
\end{lemma}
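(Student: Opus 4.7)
My plan is to reduce both counts to lattice-point counts in low-dimensional polytopes via a common ``gap-coordinate'' parameterization of intervals, and then evaluate the resulting sums explicitly.

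For $\SR(n+1, n)$, encode an interval $(A,B) = ((a_1, a_2), (b_1, b_2))$ in $[n+1] \times [n]$ by the six non-negative integers $(x_1, x_2, x_3, y_1, y_2, y_3)$ defined by $x_1 = a_1$, $x_2 = b_1 - a_1$, $x_3 = (n+1) - b_1$ and symmetrically $y_1 = a_2$, $y_2 = b_2 - a_2$, $y_3 = n - b_2$, so that $x_1 + x_2 + x_3 = n+1$ and $y_1 + y_2 + y_3 = n$. A short expansion shows that the simple-rainbow condition $|A| + |B| = 2n+1$ is equivalent to $x_1 + y_1 = x_3 + y_3$. I stratify such tuples by the signed integer $s \defeq x_1 - x_3 = y_3 - y_1$; for $s \geqslant 0$ the remaining data is determined by a pair $(x_3, y_1)$ with $x_3 \in [0, \lfloor (n+1-s)/2 \rfloor]$ and $y_1 \in [0, \lfloor (n-s)/2 \rfloor]$, giving
\[
f(s) \defeq \left(\left\lfloor \tfrac{n+1-s}{2}\right\rfloor + 1\right)\left(\left\lfloor \tfrac{n-s}{2}\right\rfloor + 1\right)
\]
solutions, with the same count for $-s$ by symmetry. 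Hence $\SR(n+1, n) = f(0) + 2\sum_{s=1}^{n} f(s)$. Splitting by the parity of $n$ and evaluating the resulting sums with the standard formulas for $\sum i^2$ and $\sum i(i+1)$, this simplifies to $(k+1)(2k+1)(2k+3)/3$ when $n = 2k$ and to $2(k+1)(k+2)(2k+3)/3$ when $n = 2k+1$, each of which equals $\binom{n+3}{3}$.

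For $\SR(n, n)$, I first note that the order-reversing ``complementation'' $(a_1, a_2) \mapsto (n - a_1, n - a_2)$ on $[n] \times [n]$ provides an explicit bijection between the two maximal simple rainbows $R_{(2n-1)/2}$ and $R_{(2n+1)/2}$, so it suffices to count intervals $(A, B)$ with $|A| + |B| = 2n+1$ in $[n] \times [n]$. Viewing $[n] \times [n]$ as the sublattice of $[n+1] \times [n]$ with $a_1, b_1 \leqslant n$, every such interval is also counted by $\SR(n+1, n)$, and the ``extras'' appearing in the larger grid are precisely those with $b_1 = n+1$ (as $a_1 = n+1$ would force $|A| + |B| \geqslant 2n+2$). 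For those extras the midpoint condition collapses to $a_1 + a_2 + b_2 = n$ with $0 \leqslant a_2 \leqslant b_2 \leqslant n$; eliminating $a_1 = n - a_2 - b_2 \geqslant 0$ puts them in bijection with pairs $(a_2, b_2)$ satisfying $a_2 \leqslant b_2$ and $a_2 + b_2 \leqslant n$. Summing $\lfloor k/2 \rfloor + 1$ over $k \in [0, n]$ and splitting by parity of $n$, this count equals $\lfloor (n+2)/2 \rfloor \lceil (n+2)/2 \rceil$, from which the formula for $\SR(n, n)$ follows.

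The main obstacle is the algebraic manipulation of $\sum_{s} f(s)$: because the floor functions in $f$ behave differently on odd and even arguments, the sum has to be broken into parity classes and carefully re-indexed before the telescoping into $\binom{n+3}{3}$ becomes visible. Each individual step is routine, but the bookkeeping is somewhat delicate.
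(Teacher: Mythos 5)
Your proposal is correct, and it takes a genuinely different route from the paper. The paper proves $\SR(n+1,n)=\binom{n+3}{3}$ by constructing an explicit bijection between the simple rainbow $R_{(2n+1)/2}$ and the collection $A(n)$ of three-element subsets of $\{-1,0,\ldots,n+1\}$: both sets are partitioned into three pieces ($A_0/A_l/A_h$ by whether the middle element is equidistant, closer to the low end, or closer to the high end; $B_0/B_l/B_h$ by whether $a=0$ or the slope is below or above $1$), explicit mutually inverse formulas are given matching $A_0\leftrightarrow B_0$ and $A_l\leftrightarrow B_l$, and a reflection symmetry handles the third piece. The $\SR(n,n)$ formula then falls out structurally: $B_l\sqcup B_h$ is, under an isomorphism, precisely a simple rainbow for $C_{p^nq^n}$, so one subtracts $\lvert A_0(n)\rvert=\lfloor\frac{n+2}{2}\rfloor\lceil\frac{n+2}{2}\rceil$, with the arithmetic-progression count already established in \cref{lemma-arithmetic}. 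Your argument instead parameterizes intervals by gap coordinates, turns the midpoint constraint into the single linear equation $x_1-x_3=y_3-y_1$, stratifies by the signed offset $s$, counts each stratum as a product of two interval lengths, and sums; and you obtain $\SR(n,n)$ from $\SR(n+1,n)$ by counting the ``boundary'' intervals with $b_1=n+1$, identifying them with lattice points in a triangle. I checked your stratified count and your boundary count on small cases ($n=2,3,4$) and they match the paper's table, and your reduction to summing $\lfloor k/2\rfloor+1$ over $k\in[0,n]$ correctly produces $\lfloor\frac{n+2}{2}\rfloor\lceil\frac{n+2}{2}\rceil$. The paper's bijection explains conceptually \emph{why} the answer is a binomial coefficient, and the $\SR(n,n)$ formula emerges without a separate computation; your version trades that insight for a more mechanical, self-contained verification, and your passage from $\SR(n+1,n)$ to $\SR(n,n)$ via boundary intervals is a clean relation not present in the paper. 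The only thing you leave implicit is the parity-split evaluation of $f(0)+2\sum_{s=1}^n f(s)$ into $\binom{n+3}{3}$; you flag this as routine bookkeeping, and spot checks confirm the claimed closed forms, but a full write-up would need to carry this out.
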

\begin{proof}
    Let $n \geqslant 2$. Let $A(n) \subset \mathcal{P}(\{-1, 0, 1, \dots, n, n+1\})$ denote the set of subsets of $\{-1, 0, 1, \dots, n, n+1\}$ with three elements. We will denote such a three element set as $(c, d, e)$, always ordered from smallest to biggest.

    Let $B(n) = R_{(2n+1)/2} \subset \mathbb{I}(\Sub(C_{p^{n+1} q^n}))$ denote the simple rainbow for $C_{p^{n+1} q^n}$. Recall from \cref{definition-simpledouble} that this is the set of intervals in $\Sub(C_{p^{n+1} q^n})$, represented by tuples $(a, x; b, y) = (C_{p^a q^x}, C_{p^b q^y})$, that satisfy that $0 \leqslant a \leqslant b \leqslant n+1$, $0 \leqslant x \leqslant y \leqslant n$, and $(a+x+b+y)/2 = (2n+1)/2$.

    We will decompose each of $A(n)$ and $B(n)$ into three parts, and use this decomposition to prove both of the statements of the lemma.

    Let $A_0(n) \subset A(n)$ denote the subset of those three element sets that form an arithmetic progression. Let $A_l(n) \subset A(n)$ denote the subset of three element sets $(c, d, e)$ that satisfy that $d-c < e-d$, or in other words the middle element is closer to the lowest element than to the highest one. Let $A_h(n) \subset A(n)$ denote the subset of three element sets $(c, d, e)$ that satisfy that $d-c > e-d$, or in other words the middle element is closer to the highest element than to the lowest one.

    The set $A(n)$ is the disjoint union of $A_0(n)$, $A_l(n)$, and $A_h(n)$. Clearly $A_l(n)$ and $A_h(n)$ are symmetrical, and therefore $\lvert A_l(n) \rvert = \lvert A_h(n) \rvert$.

    As for the set $B(n)$, let $B_0(n) \subset B(n)$ denote the subset of those intervals $(a, x; b, y)$ with $a=0$. Let $B_l(n) \subset B(n)$ denote the subset of intervals $(a, x; b, y)$ with $a\geqslant 1$ and slope smaller than $1$, i.\ e.\ such that $b-a > y-x$ (these go \emph{low}). Lastly, Let $B_h(n) \subset B(n)$ denote the subset of intervals $(a, x; b, y)$ with $a\geqslant 1$ and slope bigger than $1$, i.\ e.\ such that $b-a < y-x$ (these go \emph{high}).

    The set $B(n)$ is the disjoint union of $B_0(n)$, $B_l(n)$, and $B_h(n)$. Note that the slope of an interval in $B(n)$ cannot be exactly $1$, as if $b-a = y-x$ then $b+x+y+a$ is even and cannot equal $2n+1$. The sets $B_l(n)$ and $B_h(n)$ are symmetrical as one is obtained from the other by reflecting the square sublattice of $\Sub(C_{p^{n+1} q^n})$ that had the column with the $C_{q^x}$ removed. Therefore $\lvert B_l(n) \rvert = \lvert B_h(n) \rvert$.

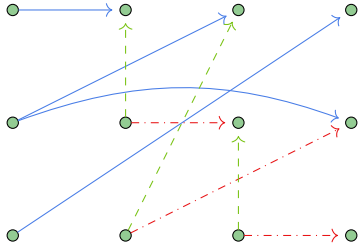
\begin{figure}[h]
    \centering
    \[\begin{gathered}
\begin{tikzpicture}[scale= 0.75]
\node[fill=dark-green!50,circle,draw,inner sep = 0pt, outer sep = 0pt, minimum size=1.5mm] (1) at (0,0) {};
\node[fill=dark-green!50,circle,draw,inner sep = 0pt, outer sep = 0pt, minimum size=1.5mm] (p) at (2,0) {};
\node[fill=dark-green!50,circle,draw,inner sep = 0pt, outer sep = 0pt, minimum size=1.5mm] (p2) at (4,0) {};
\node[fill=dark-green!50,circle,draw,inner sep = 0pt, outer sep = 0pt, minimum size=1.5mm] (q) at (0,2) {};
\node[fill=dark-green!50,circle,draw,inner sep = 0pt, outer sep = 0pt, minimum size=1.5mm] (qp) at (2,2) {};
\node[fill=dark-green!50,circle,draw,inner sep = 0pt, outer sep = 0pt, minimum size=1.5mm] (qp2) at (4,2) {};
\node[fill=dark-green!50,circle,draw,inner sep = 0pt, outer sep = 0pt, minimum size=1.5mm] (q2) at (0,4) {};
\node[fill=dark-green!50,circle,draw,inner sep = 0pt, outer sep = 0pt, minimum size=1.5mm] (q2p) at (2,4) {};
\node[fill=dark-green!50,circle,draw,inner sep = 0pt, outer sep = 0pt, minimum size=1.5mm] (q2p2) at (4,4) {};
\node[fill=dark-green!50,circle,draw,inner sep = 0pt, outer sep = 0pt, minimum size=1.5mm] (p3) at (6,0) {};
\node[fill=dark-green!50,circle,draw,inner sep = 0pt, outer sep = 0pt, minimum size=1.5mm] (qp3) at (6,2) {};
\node[fill=dark-green!50,circle,draw,inner sep = 0pt, outer sep = 0pt, minimum size=1.5mm] (q2p3) at (6,4) {};
\draw[->, black!70] (q2) edge[rainbow-blue] (q2p);
\draw[->, black!70] (q) edge[bend left = 20,rainbow-blue] (qp3);
\draw[->, black!70] (q) edge[rainbow-blue] (q2p2);
\draw[->, black!70] (1) edge[rainbow-blue] (q2p3);
\draw[->, black!70] (p2) edge[rainbow-red, dash dot] (p3);
\draw[->, black!70] (qp) edge[rainbow-red, dash dot] (qp2);
\draw[->, black!70] (p) edge[rainbow-red, dash dot] (qp3);
\draw[->, black!70] (p) edge[rainbow-green,dashed] (q2p2);
\draw[->, black!70] (qp) edge[rainbow-green, dashed] (q2p);
\draw[->, black!70] (p2) edge[rainbow-green, dashed] (qp2);
\end{tikzpicture}
\end{gathered}\]
\caption{The set $B(2)$, with the subsets $B_0(2), B_l(2)$ and $B_h(2)$ colored in blue (solid), red (dash-dot) and olive (dashed) respectively.}
    \label{fig:enter-label}
\end{figure}

The following formulas induce mutually inverse bijections between $A_0(n)$ and $B_0(n)$, and between $A_l(n)$ and $B_l(n)$, for each $n \geqslant 2$.
    \begin{equation*}
        \label{eq-bijectiongrid}
        \begin{aligned}
        (c, d, e) & \mapsto  (c+e-2d, d; n-c, n+1-e+d) \\
        (x-n-1+y+a, x, x+n+1-y) & \mapsfrom  (a, x; b, y)
    \end{aligned}
    \end{equation*}
    To begin proving this, note that $(c+e-2d)+d+(n-c)+(n+1-e+d)=2n+1$. If $(c, d, e) \in A(n)$ we know that $-1\leqslant c < d<e \leqslant n+1$, and this shows that $0\leqslant d \leqslant n+1-e+d \leqslant n$ and $c+e-2d\leqslant n-c \leqslant n+1$. 
    
    Note that $c+e-2d \geqslant 0$ need not hold, so this formula will not map $A(n)$ to $B(n)$, but if $(c, d, e) \in A_0(n)$ then by definition $c+e-2d = 0$ and therefore $(c+e-2d, d, n-c, n+1-e+d) \in B_0(n)$. If $(c, d, e) \in A_l(n)$ then by definition $c+e-2d \geqslant 1$, and $(n-c) - (c+e-2d) > (n+1-e+d)-d$ because $d>c$, so the slope of the resulting interval is smaller than $1$, and therefore $(c+e-2d, d, n-c, n+1-e+d) \in B_l(n)$.

    For the opposite map, for $(a, x; b, y)\in B(n)$ we know that $a+x+y = 2n+1 -b$, and $b\leqslant n + 1$, therefore $-1 \leqslant x-n-1+y+a$. Since $x \leqslant y \leqslant n$, we obtain that $x< x+n+1-y \leqslant n+1$. 
    
    As before, in general $x-n-1+y+a < x$ need not hold, which means that this formula does not map $B(n)$ to $A(n)$. However, if $(a, x; b, y)\in B_0(n)$ then $a=0$ and because $y \leqslant n$ we obtain that $x-n-1+y+a < x$. Additionally since $a=0$ we have $x-(x-n-1+y+a)=(x+n+1-y) -x$ which means that the three terms form an arithmetic progression, and $(x-n-1+y+a, x, x+n+1-y) \in A_0(n)$. If $(a, x; b, y)\in B_l(n)$ then $y+a < b+x$ and $y+a+b+x=2n+1$ which together yields that $y+a < n+1$, and therefore $x-n-1+y+a < x$. Lastly, because $a > 0$ we have that $x-(x-n-1+y+a)<(x+n+1-y) -x$, which altogether means that $(x-n-1+y+a, x, x+n+1-y) \in A_l(n)$.

    It can be checked directly that the formulas in \eqref{eq-bijectiongrid} are mutual inverses. We have constructed bijections between $A_0(n)$ and $B_0(n)$, and between $A_l(n)$ and $B_l(n)$. Since by symmetry $\lvert A_l(n) \rvert = \lvert A_h(n) \rvert$ and $\lvert B_l(n) \rvert = \lvert B_h(n) \rvert$, we obtain that $\SR(n+1,n) = \lvert B(n) \rvert = \lvert A(n) \rvert =\binom{n+3}{3}$.

    The sublattice of $\Sub(C_{p^{n+1} q^n})$ with the column of the $C_{q^x}$ removed is isomorphic to $\Sub(C_{p^n q^n})$, and the set $B_l(n) \coprod B_h(n)$ under that isomorphism is precisely a simple rainbow, therefore $\SR(n,n)=\lvert A_l(n) \coprod A_h(n) \rvert = \binom{n+3}{3} - \lvert A_0(n) \rvert=\binom{n+3}{3} - \floor*{\frac{n+2}{2}} \ceil*{\frac{n+2}{2}}$ by \cref{lemma-arithmetic}.
\end{proof}

We can also now obtain the double rainbow numbers on the diagonal.

\begin{lemma}
\label{lemma:doublecount}
    $\DR(n+1,n+1)=\SR(n+1,n) + \floor*{\frac{n+1}{2}} \ceil*{\frac{n+1}{2}}$.
\end{lemma}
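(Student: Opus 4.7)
The plan is to exhibit a ``shift'' injection from the simple rainbow of $C_{p^{n+1}q^{n}}$ into the double rainbow of $C_{p^{n+1}q^{n+1}}$ and then count the complement of its image.

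Write $B_{\mathrm{s}}$ for the simple rainbow $R_{(2n+1)/2} \subset \mathbb{I}(\Sub(C_{p^{n+1}q^n}))$ (non-identity intervals $(a,x;b,y)$ with $a+x+b+y=2n+1$) and $B_{\mathrm{d}}$ for the double rainbow $R_{n+1} \subset \mathbb{I}(\Sub(C_{p^{n+1}q^{n+1}}))$ (non-identity intervals with $a+x+b+y=2n+2$). The first step is to define
$$\phi \colon B_{\mathrm{s}} \longrightarrow B_{\mathrm{d}}, \qquad (a,x;b,y) \longmapsto (a,x;b,y+1).$$
Checking well-definedness is routine: $y+1 \leq n+1$ since $y \leq n$, the midpoint sum becomes $2n+2$, and the non-identity condition is preserved because $(a,x)=(b,y+1)$ would force $x=y+1>y$, contradicting $x \leq y$ in the source. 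Injectivity is immediate.

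The second step is to characterize the complement of the image of $\phi$. Given $(a,x;b,y') \in B_{\mathrm{d}}$, its candidate preimage $(a,x;b,y'-1)$ lies in $B_{\mathrm{s}}$ precisely when $y'>x$. Indeed, the extreme case $y'=0$ forces $x=0$ and then $a+b=2n+2$ with $a \leq b \leq n+1$ gives $a=b=n+1$, which is the excluded identity; and the other potential obstruction $a=b,\ x=y'-1$ is ruled out by the parity argument $2a+2x+1=2n+2$, which is impossible. Hence the complement of the image of $\phi$ is exactly the ``diagonal''
$$C := \{(a,x;b,x) \in B_{\mathrm{d}}\}.$$

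The main step is to count $C$. An interval $(a,x;b,x) \in C$ satisfies $a+b=2(n+1-x)$ with $0 \leq a<b \leq n+1$. Setting $k:=n+1-x \in \{0,\dots,n+1\}$, the valid values of $a$ lie in $[\max(0,2k-n-1),\,k-1]$, which yields $\min(k,\, n+1-k)$ pairs. A parity-based summation then gives
$$|C| = \sum_{k=0}^{n+1}\min(k,\, n+1-k) = \floor*{\tfrac{n+1}{2}}\ceil*{\tfrac{n+1}{2}},$$
so that $\DR(n+1,n+1) = |B_{\mathrm{d}}| = |B_{\mathrm{s}}| + |C| = \SR(n+1,n) + \floor*{(n+1)/2}\ceil*{(n+1)/2}$, as claimed.

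The main subtlety is step two: confirming that the complement of the image of $\phi$ really collapses to the clean diagonal set, with the parity obstruction removing all other exceptional cases. Once this is established, the counting in step three is a short sum that splits into two identical triangular sums (paralleling the proof of \cref{lemma-arithmetic}).
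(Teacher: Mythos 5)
Your proof is correct and matches the paper's argument: the same shift map $(a,x;b,y)\mapsto(a,x;b,y+1)$, the same identification of the complement as the diagonal tuples with $x=y$, and the same size. The only cosmetic difference is in the last step, where you compute $\sum_{k=0}^{n+1}\min(k,n+1-k)$ directly rather than (as the paper does) mapping $(a,x;b,x)\mapsto(a,(a+b)/2,b)$ to biject the complement with three-term arithmetic progressions in $\{0,\dots,n+1\}$ and invoking \cref{lemma-arithmetic}.
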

\begin{proof}
    Consider the function $\mathbb{I}(\Sub(C_{p^{n+1} q^n})) \to \mathbb{I}(\Sub(C_{p^{n+1} q^{n+1}}))$ that sends a tuple $(a, x; b, y) = (C_{p^a q^x}, C_{p^b q^y})$ to $(a, x; b, y+1) = (C_{p^a q^x}, C_{p^b q^{y+1}})$. This function maps $R_{(2n+1)/2}$ to $R_{(2n+2)/2}$ injectively. The complement $C$ of its image in $R_{(2n+2)/2}$ contains precisely all tuples $(a, x; b, y) \in \DR(n+1,n+1)$ with $x=y$. Then sending $(a, x; b, y) \in C$ to the three element sequence $(a, (a+b)/2, b)$ gives a bijection between $C$ and the set of three-element arithmetic progressions in $\{0, 1, \dots, n, n+1\}$, which has size $\floor*{\frac{n+1}{2}} \ceil*{\frac{n+1}{2}}$ by \cref{lemma-arithmetic}.
\end{proof}

Finally, we can prove that the values of the simple and double rainbow numbers on each row grow linearly.

\begin{lemma}\label{lemma-linearstep}
    Let $n \geqslant m \geqslant 2$. Then
    \[
    \SR(n+2,m) = \SR(n,m) + \binom{m+2}{2}.
    \]
    Furthermore, if $n+m$ is even,
    \[
    DR(n+2,m) = DR(n,m) + \binom{m+2}{2}.
    \]
\end{lemma}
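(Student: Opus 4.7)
My plan is to exhibit an explicit shift map relating the simple (resp.\ double) rainbows of $C_{p^nq^m}$ and $C_{p^{n+2}q^m}$, and to compute its complement via a projection onto the $q$-coordinates. Define
\[
\phi \colon \mathbb{I}(\Sub(C_{p^n q^m})) \to \mathbb{I}(\Sub(C_{p^{n+2} q^m})), \qquad (a, x; b, y) \mapsto (a+1, x; b+1, y).
\]
This is injective, respects $a \leqslant b$ and $x \leqslant y$, and increases $a + x + b + y$ by $2$, so it shifts the midpoint by exactly $1$. Choose the simple rainbows $R \subset \mathbb{I}(\Sub(C_{p^n q^m}))$ and $R' \subset \mathbb{I}(\Sub(C_{p^{n+2} q^m}))$ with midpoints $M$ and $M+1$---take $M = (n+m)/2$ when $n+m$ is odd and $M = (n+m-1)/2$ when $n+m$ is even. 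Under this choice $\phi$ restricts to an injection $R \hookrightarrow R'$ whose image is precisely $\{(a', x; b', y) \in R' : a' \geqslant 1,\ b' \leqslant n + 1\}$, so
\[
\SR(n+2, m) - \SR(n, m) = |R' \setminus \phi(R)| = |A \cup B|,
\]
where $A = \{(a', x; b', y) \in R' : a' = 0\}$ and $B = \{(a', x; b', y) \in R' : b' = n + 2\}$.

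To compute $|A \cup B|$ I would project onto $(x, y)$. Write $c = m$ in the odd case and $c = m - 1$ in the even case; these are exactly the values of $x+y$ for which both $a'=0$ and $b'=n+2$ are simultaneously realisable in $R'$. For a fixed pair $(x, y)$ with $0 \leqslant x \leqslant y \leqslant m$, imposing $a' = 0$ determines $b'$ uniquely and yields a valid element of $A$ exactly when $x + y \geqslant c$; symmetrically imposing $b' = n + 2$ determines $a'$ and yields a valid element of $B$ exactly when $x + y \leqslant c$; and the two elements coincide precisely when $x + y = c$. Since for every pair at least one of the two inequalities holds, this sets up a bijection between $A \cup B$ and the set of pairs $(x, y)$ with $0 \leqslant x \leqslant y \leqslant m$, which has cardinality $\binom{m+2}{2}$.

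For the double rainbow identity the argument transfers verbatim: with $n + m$ even take $M = (n+m)/2$ and $M' = (n+m+2)/2 = M + 1$, so $\phi$ again embeds the double rainbow of $C_{p^n q^m}$ into that of $C_{p^{n+2}q^m}$, and the same $(x, y)$-projection with threshold $c = m$ yields $|A \cup B| = \binom{m+2}{2}$.

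The main obstacle is purely bookkeeping: tracking the parity-dependent midpoint so that $\phi$ lands in the correct rainbow, and choosing the threshold $c$ correctly in each of the three cases (odd simple, even simple, double). The hypothesis $n \geqslant m \geqslant 2$ ensures $2m \leqslant n + m$, which makes the upper-bound conditions $b' \leqslant n+2$ and $a' \leqslant n+2$ collapse to the single threshold $c$ used in the projection, so no boundary analysis is required.
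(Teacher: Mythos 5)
Your proposal is correct and takes essentially the same approach as the paper: the identical shift map $(a,x;b,y)\mapsto(a+1,x;b+1,y)$ embedding one rainbow into the next, the identification of the complement as the set of intervals with $a'=0$ or $b'=n+2$, and a projection onto $(x,y)$ to count it. The only cosmetic difference is that the paper packages the count by noting $(a,b)\mapsto a+b$ is a bijection from the boundary set onto $\{0,\dots,2n+4\}$, whereas you make the same dichotomy explicit via the threshold $c$; both are the same bijection $T\to R'\setminus\phi(R)$.
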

\begin{proof}
We will treat the simple and double rainbow case simultaneously. Sending $C_{p^a q^x}$ to $C_{p^{a+1} q^x}$ embeds the lattice $\Sub(C_{p^n q^m})$ into the lattice $\Sub(C_{p^{n+2} q^m})$, and this map sends $R_M$ into $R_{M+1}$. Therefore we obtain embeddings of simple and double rainbows, and the complements of these embeddings are given by the intervals $(a, x; b, y)$ that satisfy that either $a=0$ or $b=n+2$ and with midpoint $(a+x+b+y)/2=M+1$ for the midpoint $M$ in the definition of the starting rainbow. See Figure \ref{fig:RainbowEmbedding}.

\begin{figure}[h]
\centering
\tikzset{every path/.style={-{>[sep = 1.5mm]}}}
\begin{tikzpicture}
    \draw[red] (0,0) to (1,2);
    \draw[blue] (0,1) to (0,2);
    \draw[blue] (0,1) to (1,1);
    \draw[blue] (1,0) to (1,1);
    \grid{1}{2};
    \gridbox{0}{0}{1}{2};
\end{tikzpicture}
\hspace{2cm}
\begin{tikzpicture}
    \draw[red] (1,0) to (2,2);
    \draw[blue] (1,1) to (1,2);
    \draw[blue] (1,1) to (2,1);
    \draw[blue] (2,0) to (2,1);
    \draw[red,dashed] (0,1) to (2,2);
    \draw[red,dashed,bend left] (0,1) to (3,1);
    \draw[red,dashed] (1,0) to (3,1);
    \draw[blue, dashed] (0,2) to (1,2);
    \draw[blue,dashed] (2,0) to (3,0);
    \draw[olive,dashed] (0,0) to (3,2);
    \grid{3}{2};
    \gridbox{1}{0}{2}{2};
\end{tikzpicture}
\tikzset{every path/.style={}}
\caption{The simple maximal rainbow $R_{3/2}$ in $C_{pq^2}$ (left) and its embedding into $R_{5/2}$ in $C_{p^3q^2}$ (right). The complement of the image of $R_{3/2}$ in $R_{5/2}$ is shown with dashed arrows.}
\label{fig:RainbowEmbedding}
\end{figure}
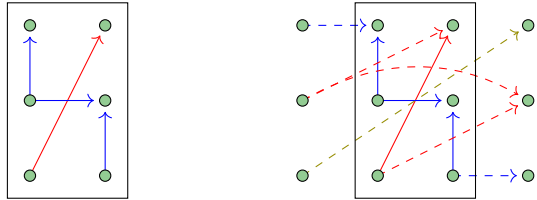

Let $T$ denote the set of pairs $(x,y)$ with $0 \leqslant x \leqslant y \leqslant m$. The cardinality of $T$ is $\binom{m+2}{2}$. Let $U$ be the set of pairs $(a,b)$ with $0 \leqslant a \leqslant b \leqslant n+2$ and such that either $a=0$ or $b=n+2$. The map that sends $(a,b)$ to $a+b$ gives a bijection between $U$ and the set $\{0, \dots, 2n+4\}$.

Recall that the $M$ in the definition of the rainbows is one of $(n+m-1)/2$, $(n+m)/2$, or $(n+m+1)/2$. This means that for each $(x, y) \in T$, we know that $2M + 2 -x-y$ lies in the set $\{0, \dots, 2n+4\}$. Therefore there exists a unique pair $(a,b) \in U$ with $x+y+a+b = 2M +2$, which means that $(a, x; b, y)$ lies in the complement of the image of the relevant rainbow for $(n,m)$ in the rainbow for $(n+2,m)$. Therefore this complement has the same cardinality as $T$, which is $\binom{m+2}{2}$.
\end{proof}

\begin{lemma}
    $\SR(n,m) - DR(n,m) = \ceil*{\frac{m}{2}} $ for $n \geqslant m \geqslant 2$ and $n+m$ even.
\end{lemma}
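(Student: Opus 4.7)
The strategy is to reduce to the diagonal case $n=m$ via \cref{lemma-linearstep} and then evaluate both quantities using the explicit formulas of \cref{lemma:singlecount} and \cref{lemma:doublecount}.

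First I would note that \cref{lemma-linearstep} shows that both $\SR(n,m)$ and $\DR(n,m)$ increase by the same quantity $\binom{m+2}{2}$ when $n$ is replaced by $n+2$ (with $m$ fixed, under the standing hypothesis $n \geqslant m \geqslant 2$ and $n+m$ even; note that $n \mapsto n+2$ preserves the parity of $n+m$). In particular, the difference $\SR(n,m) - \DR(n,m)$ is invariant under $n \mapsto n+2$, so by induction on $n$ it suffices to establish the identity at the base case $n = m$.

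At the base case, \cref{lemma:singlecount} gives
\[
    \SR(m,m) = \binom{m+3}{3} - \left\lfloor \tfrac{m+2}{2}\right\rfloor \left\lceil \tfrac{m+2}{2}\right\rceil,
\]
while \cref{lemma:doublecount} applied with $n = m-1$, together with $\SR(m, m-1) = \binom{m+2}{3}$ from \cref{lemma:singlecount}, yields
\[
    \DR(m,m) = \binom{m+2}{3} + \left\lfloor \tfrac{m}{2}\right\rfloor \left\lceil \tfrac{m}{2}\right\rceil.
\]
Subtracting and invoking the Pascal identity $\binom{m+3}{3} - \binom{m+2}{3} = \binom{m+2}{2}$ gives
\[
    \SR(m,m) - \DR(m,m) = \binom{m+2}{2} - \left\lfloor \tfrac{m+2}{2}\right\rfloor \left\lceil \tfrac{m+2}{2}\right\rceil - \left\lfloor \tfrac{m}{2}\right\rfloor \left\lceil \tfrac{m}{2}\right\rceil.
\]

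A short parity case analysis then finishes the argument. When $m = 2k$ the right-hand side reduces to $(k+1)(2k+1) - (k+1)^2 - k^2 = k = \lceil m/2 \rceil$, and when $m = 2k+1$ it reduces to $(k+1)(2k+3) - (k+1)(k+2) - k(k+1) = (k+1)\bigl[(2k+3) - (k+2) - k\bigr] = k+1 = \lceil m/2 \rceil$. There is no real obstacle here: all the substantive combinatorics lives inside \cref{lemma:singlecount}, \cref{lemma:doublecount} and \cref{lemma-linearstep}, and the present statement is a direct bookkeeping consequence of those three results.
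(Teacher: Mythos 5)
Your proof follows the same strategy as the paper: reduce to the diagonal via \cref{lemma-linearstep}, then combine \cref{lemma:singlecount} and \cref{lemma:doublecount} and simplify. The algebra and the parity check are both correct.

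There is one small gap to flag. In evaluating $\DR(m,m)$ you invoke $\SR(m,m-1) = \binom{m+2}{3}$ from \cref{lemma:singlecount}, but that lemma is stated under the hypothesis $n \geqslant 2$, so it only supplies $\SR(n+1,n)$ for $n \geqslant 2$, i.e. $\SR(m,m-1)$ for $m \geqslant 3$. For the base case $m = 2$ you would be applying the formula to $\SR(2,1)$, which falls outside the lemma's stated range (and indeed outside the table of values the paper records, which only covers $n,m \geqslant 2$). The paper sidesteps exactly this point by carrying out the computation for $m \geqslant 3$ and then verifying $\SR(2,2) - \DR(2,2) = 1 = \lceil 2/2 \rceil$ directly by inspection. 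Your argument should do likewise, or else argue separately that the formula $\SR(n+1,n) = \binom{n+3}{3}$ extends to $n=1$.
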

\begin{proof}
    Putting together \cref{lemma:singlecount} and \cref{lemma:doublecount} we obtain that for each $m\geqslant 3$,
    \begin{align*}
\SR(m,m) - DR(m,m) &=\SR(m,m)-\SR(m,m-1)-\floor*{\frac{m}{2}} \ceil*{\frac{m}{2}} \\
&= \binom{m+3}{3}-\floor*{\frac{m+2}{2}} \ceil*{\frac{m+2}{2}} - \binom{m+2}{3} - \floor*{\frac{m}{2}} \ceil*{\frac{m}{2}} \\
&= \binom{m+2}{2}-\floor*{\frac{m(m+2)}{2}+1} \\
&= \ceil*{\frac{m}{2}},
    \end{align*}
    
    % \begin{equation*}
    %     \begin{aligned}
    %         \SR(m,m) - DR(m,m)=\SR(m,m)-\SR(m,m-1)-\floor*{\frac{m}{2}} \ceil*{\frac{m}{2}}=\\ \binom{m+3}{3}-\floor*{\frac{m+2}{2}} \ceil*{\frac{m+2}{2}} - \binom{m+2}{3} - \floor*{\frac{m}{2}} \ceil*{\frac{m}{2}} = \binom{m+2}{2}-\floor*{\frac{m(m+2)}{2}+1}=\ceil*{\frac{m}{2}},
    %     \end{aligned}
    % \end{equation*}
    while $\SR(2,2) - DR(2,2)=1= \ceil*{\frac{2}{2}}$ by inspection. Then \cref{lemma-linearstep} proves the statement for $n > m$.
\end{proof}

Our computations of the size of simple and double rainbows allow us to refine \cref{conjecture:simpledouble} to the following:

\begin{conjecture}\label{conj:enumeratedouble}
    Let $n \geqslant m \geqslant 2$. Then
    \[
        \mathfrak{c}(C_{p^nq^m}) = \begin{cases} 
        \DR(n,m)+2=1+6k & n=2 \text{ and } m=2k \text{ or } m=2 \text{ and } n=2k\\[3pt]
        \SR(n,m)= \binom{m+3}{3}-\floor*{\frac{m+2}{2}} \ceil*{\frac{m+2}{2}}+ \frac{n-m}{2} \binom{m+2}{2} & n,m \geqslant 3 \text{ and } n+m \text{ even}\\[3pt]
        \SR(n,m)=\binom{m+3}{3} + \frac{n-m-1}{2} \binom{m+2}{2} &  n+m \text{ odd}
        \end{cases}
    \]
\end{conjecture}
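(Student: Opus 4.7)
The conjecture splits into a lower bound (exhibiting minimal generating sets of the claimed sizes) and an upper bound (showing no minimal generating set is larger). The lower bound is essentially in hand: in the two generic cases, the simple rainbow $R_{(n+m)/2}$ (respectively $R_{(n+m\pm 1)/2}$ when $n+m$ is odd) is a partial rainbow, hence a minimal generating set by \cref{proposition-rainbowsareminimal}, whose cardinality matches the formula via \cref{lemma:singlecount} together with the linear step of \cref{lemma-linearstep}. In the anomalous case $m=2$ with $n$ even, \cref{lem:addingtodouble} already produces a minimal generating set of size $\DR(n,2)+2$, and a direct computation from \cref{lemma:doublecount} and \cref{lemma-linearstep} rewrites this as $1+6k$ with $n=2k$. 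So the substantive task is the upper bound, and this is where my proposal focuses.

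\textbf{Upper bound strategy.} I would generalise the type-counting argument of \cref{sec:grids}. For a minimal generating set $S$ of $\mathsf{T} \in \mathsf{Tr}(C_{p^nq^m})$, apply the rank-interval map $P \colon \mathbb{I}(\Sub(G)) \to \mathbb{I}(\NN)$ of \cref{definition-rankmap} and partition $S$ according to the pair of ranks $(P(K), P(H))$. Mirroring \cref{lemma:typepairs}, the plan is to prove by induction on $n+m$ a family of inequalities
\[
|S \cap P^{-1}(i,j)| + |S \cap P^{-1}(k,\ell)| \;\leqslant\; \rho_{i,j,k,\ell}(n,m),
\]
where the right-hand side is the size of the maximal partial rainbow supported on intervals of those ranks. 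The inductive step removes the extremal row $q^m$ or column $p^n$ from $[n]\times[m]$; \cref{lemma-generatefiltration} guarantees that the projected generating set on the smaller lattice remains minimal, and the arrows crossing the removed row or column are controlled as in \cref{lemma:farleft,lemma:typepairs}. Summing the inequalities over all rank-pairs, weighted by $1/2$ to avoid double counting, would yield the target upper bound $\mathfrak{c}(C_{p^nq^m}) \leqslant \SR(n,m)$ in the generic cases. The base case $m=1$ is exactly \cref{corollary-n1bound}.

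\textbf{Handling the $m=2$ anomaly.} The crux is explaining why the bound jumps by exactly $+2$ when $m=2$ and $n$ is even but not otherwise. In the thin strip $[n]\times [2]$ the midlevel $R_{(n+m)/2}$ contains, beyond its rainbow, two geometrically isolated length-one arrows at the positions $A$ and $B$ of \cref{lem:addingtodouble} whose addition neither generates other midlevel arrows by restriction nor is generated by the existing rainbow; this is precisely what gives slack $+2$. For $m\geqslant 3$, I expect that any candidate pair of additional midlevel arrows forces a chain of length two through an intermediate subgroup, creating redundancy against the taller rainbow arrows already present and contradicting minimality. Formalising this via a localised analysis of the sublattice spanned by the midlevel endpoints of $S$ should close the $m=2$ vs.\ $m\geqslant 3$ dichotomy.

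\textbf{Expected main obstacle.} The rainbow-size inequalities needed for the inductive step are cleanly defined but combinatorially intricate: one must verify that the maximal partial rainbow restricted to each rank-pair indeed bounds the contribution of $S$, and the counting identities generalising \cref{lemma:typepairs} grow in number with $m$. I expect the sharp form of these inequalities, together with the fine structural lemma pinning down the $+2$ slack exclusively to $m=2$, to be the most technically demanding ingredient. If these can be established, the summation and rainbow computations of \cref{lemma:singlecount,lemma:doublecount,lemma-linearstep} would then assemble into the three-case formula of the conjecture.
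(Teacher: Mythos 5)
The statement you are attempting to prove is explicitly a \emph{conjecture} in the paper; no proof of it is given. As \cref{rem:lowbounds} notes, only the lower bound is established: the rainbow constructions (\cref{proposition-rainbowsareminimal}, \cref{lem:addingtodouble}) together with the counting results (\cref{lemma:singlecount}, \cref{lemma:doublecount}, \cref{lemma-linearstep}) produce minimal generating sets of the claimed sizes, which is exactly what your first paragraph reproduces. The upper bound --- that no minimal generating set can be larger --- is precisely the part the authors leave open, and it is precisely the part your proposal does not actually close.

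What you offer for the upper bound is a plausible strategy shape, not a proof, and you flag this yourself; but the gaps are substantive. First, the quantities $\rho_{i,j,k,\ell}(n,m)$ are never specified, and the assertion that a $\tfrac12$-weighted sum of such pairwise bounds ``would yield'' $\SR(n,m)$ is unsupported. The $m=1$ case of \cref{lemma:typepairs} and \cref{corollary-n1bound} works because there are exactly three type classes, each arrow lies in exactly one, each class occurs in exactly two of the three pairwise inequalities, and the three right-hand sides happen to sum to $3n+2 = 2\lfloor(3n+2)/2\rfloor$; for general $m$ the number of classes grows like $\binom{m+2}{2}$, and there is no reason to expect the analogous system to collapse to $2\,\SR(n,m)$ (or $2(\DR(n,m)+2)$ in the anomalous case) without designing the inequalities to make this happen --- which is the actual content missing from both the paper and your sketch. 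Second, you propose to stratify $S$ by the total-rank pair $(P(K),P(H))$, whereas the Section~5 argument stratifies by the $q$-components $(x,y)$ of the endpoints (top/bottom/diagonal); these are genuinely different partitions, so it is not clear you are generalizing the cited argument at all. Third, the heart of \cref{conjecture:simpledouble} --- why the complexity exceeds the simple rainbow size by exactly $+2$ when $m=2$ and $n+m$ is even, and never otherwise --- is addressed only by the heuristic that extra midlevel arrows ``should'' create redundancy when $m\geqslant 3$; making this rigorous is exactly the obstruction the authors report, and your proposal does not advance on it. Until the inequality system is written down, proved, and summed correctly, and until the $m=2$ dichotomy is established, the upper bound remains open for you as it does in the paper.
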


\begin{remark}\label{rem:lowbounds}
While \cref{conj:enumeratedouble} is conjectural, it does give a lower bound on the complexity. Thus, it only remains to prove that this coincides with the upper bound for a full understanding of the complexity.
\end{remark}

\newpage

\addtocontents{toc}{\vspace{1\baselineskip}}

\section{Further Observations and Directions}\label{sec:future}

The results in this paper came out of the development of a new algorithm for enumerating $N_\infty$ operads. However, it has also led to many new avenues of exploration in homotopical algebra. In this final section we discuss some of these avenues that became apparent to the authors during the development of this paper. We anticipate that many of these questions will lead to further intricate links between homotopical combinatorics, group theory, homotopy theory, and even statistics.

\subsection{Applications to incomplete $G$-commutative monoids}\label{subsec:nalg}

Fix a transfer system $\mathsf{T}$ and consider the category of incomplete Tambara functors. Defined in \cite{BH18}, these should be thought of as Tambara functors $R$ which only have norms 
\[
    N_H^K : R(G/H) \rightarrow R(G/K) 
\]
along subgroup inclusions $H \subset K$ which are in $\mathsf{T}$. These norms are assumed to satisfy various kinds of relations. Given two incomplete Tambara functors $R$ and $R'$ with the same underlying Green functor, one might guess that that the following is true. If, for all arrows $(H,K)$ in a generating set for $\mathsf{T}$, the norm $N_H^K$ for $R$ and $R'$ agree, then $R$ and $R'$ are isomorphic as incomplete Tambara functors.

In \cite{HH16} Hill and Hopkins define a $G$-commutative monoid object in a $G$-symmetric monoid category. When $G$ is a cyclic $p$-group, Hill and Mazur in \cite{HM19} show that the $G$-commutative monoids in the category of $G$-Mackey functors are precisely the $G$-Tambara functors; this result was extended to all finite groups $G$ in Havarneanu's thesis \cite{Hav18}. One might additionally expect the existence of a definition of ``incomplete $G$-commutative monoid'' objects in a $G$-symmetric monoidal category as a commutative monoid along with norm maps indexed by some transfer system. These norm maps would be expected to satisfy relations similar to those imposed in the usual $G$-commutative monoid case.

In this situation, one might hope that in the specification of the data of an incomplete $G$-commutative monoid, it would merely be enough to specify the norms corresponding to the elements of a basis of a transfer system. Rubin's algorithm could then hypothetically be used to define the remaining norms.

Specializing to the incomplete Tambara functor setting, suppose we are given a collection of initial norms $N_H^K$ just for $H \subset K$ in a generating set $B$ for $\mathsf{T}$. We can attempt to define the remaining norms by following Rubin's algorithm. For any $g \in G$ there is a norm $N_{gHg^{-1}}^{gKg^{-1}}$ which is related to $N_H^K$ by conjugation in $R$. If all the restrictions in $R$ are surjective (for example in the Burnside, representation, or constant Tambara functors), then a norm $N_K^H$ determines, for any $L \subset H$, a non-canonical candidate norm function
\[
    N_{L \cap K}^L : R(G/L\cap K) \rightarrow R(G/L) 
\]
obtained as follows. For each $x \in R(G/L\cap K)$, choose an element $y \in R(G/K)$ which restricts to $x$. Then the restriction of $N_K^H(y)$ defines an element of $R(G/L)$. It is unclear whether or not this choice can be made in a sufficiently nice way so as to end up even with a multiplicative function. In any case, at this stage, we define the norms along any remaining edge of $\mathsf{T}$ by composing the candidate norms; Rubin's algorithm then implies that we have defined candidate norms for all edges in $\mathsf{T}$. 

It would be very interesting and useful to know, especially for the complete transfer system, sufficient conditions on the generating set $B$ of $\mathsf{T}$ and the initial norms $N_H^K$ for $H \subset K$ in $B$ which imply that the norms obtained by this construction actually yield an incomplete Tambara functor. There is also a version of this construction for incomplete Mackey functors and incomplete Green functors, although it seems less of immediate interest to equivariant higher algebra.

In principal, this hypothetical picture could have applications to $N_\infty$-algebras in equivariant stable homotopy theory. Namely, let $\mathcal{O}$ be an $N_\infty$-algebra, which is determined up to equivalence by its associated transfer system. In \cite{Yang}, Yang shows that the category of \emph{normed} $C_p$-rings (heuristically, these may be thought of as $C_p$-commutative monoids in the homotopy category of genuine $C_p$-spectra) is actually equivalent to the category of $C_p$-$\mathbb{E}_\infty$-rings. Yang conjectures that such a theorem is true for any $G$. If this is true, and the conjectural picture of the previous two paragraphs is true, then the problem of showing a genuine $G$-spectrum $R$ is a $\mathcal{O}$-algebra reduces merely to showing that $R$ is an $\mathbb{E}_\infty$-algebra in $Sp^{BG}$, and that $R$ has a norm map for every arrow in a basis for $\mathsf{T}$.

\subsection{Complexity for elementary abelian $p$-groups}

We have studied the width and complexity for two large families of groups in \cref{sec:subes} and \cref{sec:grids}. The next class of groups that one might consider are the elementary abelian $p$-groups. The conjectured behavior of this is very similar as to what we saw in \cref{sec:subes} with a shift happening at $n=6$ as opposed to $n=8$. We record our preliminary observations and conjecture here. We start by recording the following lemma:

\begin{lemma}\label{lem:subgroup_counting}
    Let $0 \leqslant k \leqslant n$ and $p$ prime. Then the number of subgroups isomorphic to $C_p^k$ in $C_p^n$ is given by the following \emph{Gaussian binomial coefficient}:
    \[
    \binom{n}{k}_p = \dfrac{(p^n-1)(p^n-p)\cdots (p^n-p^{k-1}) }{(p^k-1)(p^k-p)\cdots (p^k-p^{k-1})}.
    \]
\end{lemma}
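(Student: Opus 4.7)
The plan is to identify $C_p^n$ with the $n$-dimensional vector space $\mathbb{F}_p^n$ over the field with $p$ elements, under which every subgroup isomorphic to $C_p^k$ corresponds bijectively to a $k$-dimensional $\mathbb{F}_p$-linear subspace. This works because any subgroup of $C_p^n$ is automatically closed under the $\mathbb{F}_p$-action (as it is just iterated addition), and an abelian $p$-group is isomorphic to $C_p^k$ precisely when every nontrivial element has order $p$, which is the same as being a $k$-dimensional subspace.

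Once this reduction is in place, the problem becomes counting $k$-dimensional subspaces of $\mathbb{F}_p^n$ by a standard two-step argument. First I would count ordered $k$-tuples of linearly independent vectors: the first vector may be any nonzero element, giving $p^n - 1$ choices; the second vector may be anything outside the one-dimensional span of the first, giving $p^n - p$ choices; and in general after choosing $i$ independent vectors, the $(i+1)$-st must avoid the $p^i$ elements of their span, giving $p^n - p^i$ choices. Hence the total number of ordered bases of $k$-dimensional subspaces is
\[
(p^n - 1)(p^n - p)\cdots(p^n - p^{k-1}).
\]

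Next I would count, within a fixed $k$-dimensional subspace $V \cong \mathbb{F}_p^k$, the number of ordered bases of $V$ itself; by the same reasoning applied inside $V$, this number is
\[
(p^k - 1)(p^k - p)\cdots(p^k - p^{k-1}).
\]
Each $k$-dimensional subspace is therefore counted exactly that many times in the previous enumeration, so dividing the first count by the second yields the claimed formula
\[
\binom{n}{k}_p = \frac{(p^n - 1)(p^n - p)\cdots(p^n - p^{k-1})}{(p^k - 1)(p^k - p)\cdots(p^k - p^{k-1})}.
\]

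There is essentially no technical obstacle here; the only point to be careful about is the initial identification of subgroups of $C_p^n$ with $\mathbb{F}_p$-subspaces, which should be stated explicitly so that no subgroup is missed. The rest is elementary linear algebra over a finite field.
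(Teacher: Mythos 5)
Your proof is correct and follows essentially the same approach as the paper's: identify $C_p^n$ with the $\mathbb{F}_p$-vector space $\mathbb{F}_p^n$, count ordered $k$-tuples of linearly independent vectors, and divide by the number of ordered bases of a fixed $k$-dimensional subspace. Your version is slightly more careful in spelling out why subgroups of $C_p^n$ coincide with $\mathbb{F}_p$-subspaces, which the paper takes for granted.
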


\begin{proof}
    We view $C_{p}^n$ as an $n$-dimensional vector space over $\mathbb{F}_p$. The $k$-dimensional subspaces of this vector space are in bijective correspondence with the isomorphism class of the subgroup $C_p^k$. This amounts to picking a linearly independent subset with $k$ elements. There are $(p^n-1)(p^n-p)\cdots (p^n-p^{k-1})$ ways of picking these subsets, but we need to only consider distinct ones. The number of ordered bases a $k$-dimensional vector space over $\mathbb{F}_p$ is given as $(p^k-1)(p^k-p)\cdots (p^k-p^{k-1})$. The result follows.
\end{proof}

We now once again appeal to finding maximal rainbows. Hence, we need to enumerate the arcs $\underline{x} \to \underline{y}$. In terms of the group, this is the number of subgroup inclusions from all of the $C_p^x$ to the $C_p^y$. 

\begin{lemma}
    Let $0 \leqslant x < y \leqslant n$ and $p$ prime. Then
    \[
    |\underline{x} \to \underline{y}| = \binom{n}{y}_p \binom{y}{x}_p.
    \]
\end{lemma}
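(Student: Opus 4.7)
The plan is to count directly by summing over the choices of the larger subgroup. An arc $\underline{x} \to \underline{y}$ corresponds precisely to a pair $(K, H)$ of subgroups of $C_p^n$ with $K \leqslant H$, $K \cong C_p^x$, and $H \cong C_p^y$. So I would first fix a subgroup $H \leqslant C_p^n$ isomorphic to $C_p^y$, then count the number of $K \leqslant H$ with $K \cong C_p^x$, and finally sum over $H$.

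By \cref{lem:subgroup_counting} applied to $C_p^n$, the number of subgroups $H \leqslant C_p^n$ with $H \cong C_p^y$ is $\binom{n}{y}_p$. Having fixed such an $H$, note that $H$ is itself an elementary abelian $p$-group of rank $y$, so \cref{lem:subgroup_counting} applied now to $H \cong C_p^y$ in place of $C_p^n$ gives that the number of subgroups $K \leqslant H$ with $K \cong C_p^x$ is $\binom{y}{x}_p$. Since this count does not depend on the particular choice of $H$, the total is
\[
|\underline{x} \to \underline{y}| = \sum_{\substack{H \leqslant C_p^n \\ H \cong C_p^y}} \bigl|\{K \leqslant H \mid K \cong C_p^x\}\bigr| = \binom{n}{y}_p \cdot \binom{y}{x}_p,
\]
which is the desired formula.

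There is essentially no obstacle here: the result is a straightforward two-step count which is an immediate consequence of \cref{lem:subgroup_counting} together with the observation that any subgroup of $C_p^n$ isomorphic to $C_p^y$ is itself an elementary abelian $p$-group of rank $y$, so the inner counting problem has the same form as the outer one.
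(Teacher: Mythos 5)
Your proof is correct and takes the same route as the paper: count subgroups $H \leqslant C_p^n$ of rank $y$ via \cref{lem:subgroup_counting}, then for each such $H$ count rank-$x$ subgroups $K \leqslant H$ by applying the same lemma to $H \cong C_p^y$. Your write-up is a bit more explicit about the independence of the inner count on the choice of $H$, but it is the identical two-step argument.
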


\begin{proof}
    By \cref{lem:subgroup_counting} there are $\binom{n}{y}_p$ subgroups of $C_{p}^n$ of rank $y$. Then each of these has $\binom{y}{x}_p$ subgroups of rank $x$. Hence the result.
\end{proof}

Computational evidence leads us to the following conjecture:

\begin{conjecture}
    Let $G = (C_{p})^n$. Then
    \[
    \mathfrak{c}(G) = 
    \begin{cases}
        \sum\limits_{i=0}^{\frac{n-1}{2}}  \binom{n}{n-i}_p \times \binom{n-i}{i}_p & n \text{ odd,} \\
        &\\
        \sum\limits_{i=0}^{\frac{n}{2}-1}  \binom{n}{n-i}_p \times \binom{n-i}{i}_p & n \geqslant 6 \text{ even,} \\
         &\\
         \sum\limits_{i=0}^{\frac{n}{2} -1} \binom{n}{i}_p \times \binom{n-i}{i+1}_p & n=2,4.
    \end{cases}
    \]
\end{conjecture}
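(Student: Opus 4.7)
The plan is to prove $\mathfrak{c}((C_p)^n) = M_n$ (where $M_n$ denotes the piecewise expression on the right-hand side of the conjecture) by establishing both inequalities separately. The lower bound is accessible by adapting the rainbow machinery of \cref{sec:rainbows} to Gaussian binomials; the upper bound is the substantive obstacle and will require exploiting the transitive action of $GL_n(\mathbb{F}_p)$ on the subgroups of each fixed rank, which has no analogue for the square-free cyclic case.

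For the lower bound I would construct an explicit partial rainbow of size $M_n$ and invoke \cref{proposition-rainbowsareminimal}. Since $\Sub((C_p)^n)$ projects under $P$ onto the same chain $[n]$ that appeared for square-free cyclic groups, the block and arc operations of \cref{prop:rainbow_ops} carry over verbatim as combinatorial moves; only the weights change, with an arc $\underline{x} \to \underline{y}$ now representing $\binom{n}{y}_p \binom{y}{x}_p$ arrows. The symmetry $\binom{n}{k}_p = \binom{n}{n-k}_p$ keeps reflection valid, and the conditions for translation, contraction and expansion to be non-decreasing translate into inequalities on $q$-integers $[k]_p = (p^k-1)/(p-1)$. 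The critical feature is that these grow geometrically in $k$ rather than linearly, so the Gaussian analogues of \cref{lem:ineq} and \cref{lem:ineqSmall} flip already at $n=6$ instead of $n=8$, explaining why $R[0]$ is optimal only for $n=2,4$ and the centered rainbow $R[n/2]$ takes over for $n \geqslant 6$ even. Establishing these $q$-analogue inequalities, likely by induction or via a generating-function identity for a $q$-deformation of the Riordan numbers, is the main combinatorial labor of this half.

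For the upper bound $\mathfrak{c}((C_p)^n) \leqslant M_n$ the proposed strategy is symmetrization under the full automorphism group. Given any minimal generating set $S$ for $\mathsf{T} \in \mathsf{Tr}((C_p)^n)$, the closure $\mathsf{T}' \defeq \langle GL_n(\mathbb{F}_p) \cdot S \rangle$ is $GL_n(\mathbb{F}_p)$-invariant; because $GL_n(\mathbb{F}_p)$ acts transitively on subgroups of a given rank, the output of \cref{cons:rev-rubin} applied to $\mathsf{T}'$ decomposes into a disjoint union of complete $GL_n(\mathbb{F}_p)$-orbits of intervals, and such orbits are in bijection with arcs $\underline{x} \to \underline{y}$. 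The rainbow count from the previous paragraph then forces the symmetric minimal generating set of $\mathsf{T}'$ to contain at most $\lfloor (n+1)/2 \rfloor$ arcs and hence to have size at most $M_n$. One now compares $|S|$ to this symmetrized count via a refinement of \cref{lemma-generatefiltration} that tracks both $P(K)$ and $P(H)$ simultaneously, using the fact that under $GL_n(\mathbb{F}_p)$ the only invariants of an interval are its two ranks.

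The hard part will be this last comparison: showing that $|S|$ cannot exceed the number of orbits needed in $\mathsf{T}'$. A naive symmetrization could in principle produce a shorter generating set, and ruling this out is precisely the open question flagged in \cref{rem:symm} for square-free cyclic groups. I expect that the additional rigidity afforded by $(C_p)^n$, in particular the self-duality of $\Sub((C_p)^n)$ as a $q$-analogue of the Boolean lattice and the fact that every chain of subspaces admits a splitting, will be essential. Concretely, I plan to argue by induction on $P(K)$ that each arrow $a \in S$ can be assigned a distinct $GL_n(\mathbb{F}_p)$-orbit in the symmetric generating set of $\mathsf{T}'$, using the splitting to replace $a$ with a representative whose rank-pair is shared by no other arrow of $S$; if this assignment is injective then $|S| \leqslant M_n$ follows directly. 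Whether this injectivity can be made to work uniformly in $n$ and $p$ is the crux of the problem, and a successful resolution would likely shed light on the analogous open statement for $C_{p_1 \cdots p_n}$.
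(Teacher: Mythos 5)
The statement you are trying to prove is presented in the paper as a \emph{conjecture}, not a theorem; the authors explicitly say they have only computational evidence and flag, as the obstruction, the need for $q$-analogues of the combinatorial inequalities from \cref{sec:subes} --- in particular a $q$-analogue of the Riordan-number recursion. So there is no proof in the paper to compare against, and your proposal is (openly) a plan with the hard steps outstanding rather than a proof. Two remarks on the plan itself.

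For the lower bound $\mathfrak{c}((C_p)^n) \geqslant M_n$ you do not actually need the $q$-analogues of \cref{lem:ineq} and \cref{lem:ineqSmall}. Those inequalities are what the paper uses to identify which rainbow is \emph{largest}; a lower bound requires only exhibiting a single partial rainbow of size $M_n$. The conjectural formula is literally the size of the complete rainbow when $n$ is odd, of $R[n/2]$ when $n \geqslant 6$ is even, and of $R[0]$ (equivalently $R[n]$) when $n = 2,4$; each of those sizes is computed by plugging the arc count $|\underline{x}\to\underline{y}| = \binom{n}{y}_p\binom{y}{x}_p$ into the definition and invoking \cref{proposition-rainbowsareminimal}. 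So the lower bound is essentially an arithmetic verification, not the main combinatorial labor.

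The genuine open problem is the upper bound, and here your proposal does not close the gap --- nor does the paper claim an upper bound even in the square-free cyclic case (\cref{thm:maincube} is only a lower bound). Even if one proves that $M_n$ is maximal among rainbow sizes (the place where the $q$-analogue inequalities are actually needed), one must still show that the complexity of $(C_p)^n$ is realized by a rainbow. Your $GL_n(\mathbb{F}_p)$-symmetrization idea is a genuinely interesting new angle: transitivity on rank-$k$ subspaces forces any $GL_n(\mathbb{F}_p)$-invariant transfer system to have a symmetric generating set, which is indeed more rigidity than one has for $C_{p_1\cdots p_n}$. But the crucial step --- the injective assignment of arrows of $S$ to orbits in a minimal generating set of $\langle GL_n(\mathbb{F}_p)\cdot S\rangle$ --- is unconstructed, and you acknowledge this is precisely the open question of \cref{rem:symm} transplanted to the elementary abelian setting. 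There is also a secondary gap you do not address: it is not automatic that a minimal generating set for a $GL_n(\mathbb{F}_p)$-invariant $\mathsf{T}'$ must consist of \emph{nested} arcs, nor that $|S|\leqslant|S'|$ when $\mathsf{T}\subseteq\mathsf{T}'$ (the existence of the complexity $\mathfrak{c}$ exceeding the width $\w$ shows that enlarging a transfer system can shrink its minimal base). Until these comparisons are made precise, the upper bound remains open.
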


If we compare this to the result fo square-free abelian groups, we see startling similarity. The difference is that the change in behavior arises at $n=6$, and that the binomial coefficients have been replaced by Gaussian coefficients. The obstruction to proving this result comes from needing $q$-analogues of the results used in \cref{sec:subes}, in particular, it would be highly desirable to find a $q$-analogue of the recursion for the Riordan numbers.

\subsection{Distribution of generators}

Denote by $\mathsf{Tr}(G)_i$ the collection of transfer systems $\mathsf{T}$ such that $\mathfrak{m}(\mathsf{T}) = i$. If one plots $i$ against $|\mathsf{Tr}(G)_i|$ then an interesting phenomenon occurs. For example, \cref{fig:s5stats} gives the aforementioned plot for when $G = S_5$.

\begin{figure}[H]
    \centering
    \includegraphics[width=0.75\linewidth]{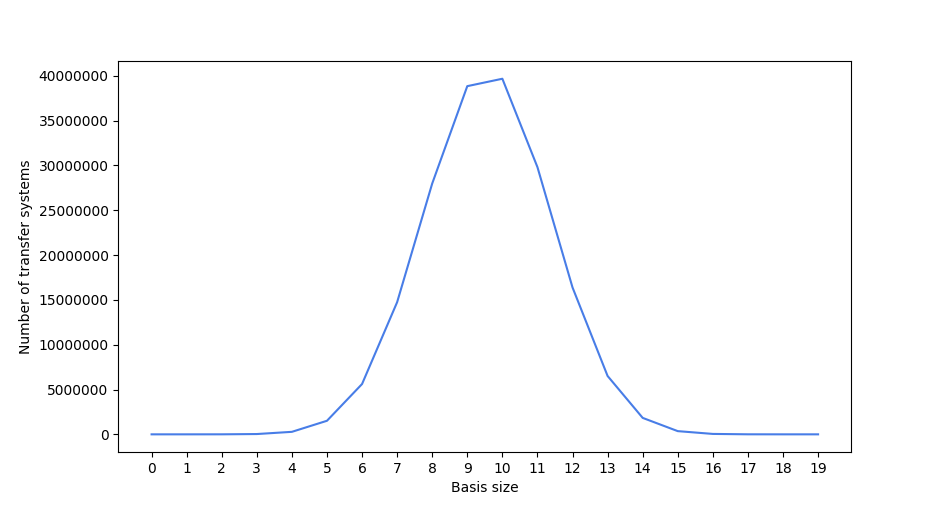}
    \caption{A plot showing the number of transfer systems for $S_5$ with a given minimal base size.}\label{fig:s5stats}
    \label{fig:s5_stats}
\end{figure}

From \cref{fig:s5_stats} we notice that we have something that looks like a binomial distribution. In fact, we obtain the same shape for all examples that we have computed. We anticipate that there should be a theorem which dictates that this is the case. 

Having a proof of this result would be beneficial to running calculations. Indeed, when we are running code to compute statistics for transfer systems, more often than not we do not even have a sensible estimate of  how long the code will take to run. With this result we would have a good guess of when the code is approximately half way as soon as the growth rate of $|\mathsf{Tr}(G)|_i$ decreases.

\subsection{Rainbow groups}

We have extensively used rainbows in our calculations for complexity. In some cases, such as in $C_{p_1 \cdots p_n}$ and for elementary abelian $p$-groups, we conjecture that the complexity is indeed realized by a rainbow. There are, however, many cases (even in the abelian setting) where rainbows do not realize the complexity, such as for $C_{p^2q^2}$. If a group does have the property that its complexity is realized by a rainbow, we shall say that $G$ is a \emph{rainbow group}. It would be interesting to classify those groups which are rainbow groups using inherent properties of either the group or the lattice of subgroups.

\subsection{Lossless generation}\label{subsec:lifting}

In \cite{BMO_lift} the notion of a \emph{lossless} group was introduced. These are groups $G$ such that transfer systems for $G$ can be detected by certain transfer systems on the poset $\Sub(G)/G$. This was used to great effect, for example, to give a recursive enumeration of transfer systems for dihedral groups $D_{p^n}$ for $p$ an odd prime in \cite{BMO_enumeration}. While \cite{BMO_lift}  proves that many groups are lossless, there are many important groups which are not lossless (that is, they are lossy). The first lossy group that was discussed was the group $S_4$, which Rubin notes has an issue with having to track which Klein 4 group lives in which dihedral group. 

The idea of generation should be compatible with the concept of being lossless. That is, the complexity of a lossless group $G$ is the same as the complexity that one would obtain from considering transfer systems on the poset $\Sub(G)/G$. Note, however, that the group $F_8$ that was discussed in \cref{rem:F8spicy} is lossy. As such, even for lossy groups, the width cannot be detected on $\Sub(G)/G$.

\subsection{Filtrations of the lattice of transfer systems}\label{sec:filtration}

Let $G$ be a finite group with collection of transfer systems $\mathsf{Tr}(G)$. While the algorithm developed in this paper give us a way to compute $|\mathsf{Tr}(G)|$, it does not yet aid in finding closed forms or recursions for this count. However, it does provide a potential route to simplifying the problem. Write $\mathsf{Tr}(G)_i$ for the collection of transfer systems $\mathsf{T}$ such that $\mathfrak{m}(\mathsf{T}) = i$. Then this stratifies the collection of transfer systems for $G$.

The key idea here is that enumerating $\mathsf{Tr}(G)_i$ is far more tangible than the enumeration of the entirety of $\mathsf{Tr}(G)$. When $i=1$ this simply counts the number of conjugacy classes of relations $(H, K)$ in $\mathrm{Sub}(G)$. Even the $i=2$ case would be of interest, say for abelian groups of square-free order.

Even in the case that $G=C_{p^n}$ this method has interesting combinatorics. Indeed, Franchere et. al \cite{fooqw}
 prove that $|\mathsf{Tr}(C_{p^n})_i|$ is exactly the $(n,i)$-th Narayana number, that is
 \[
|\mathsf{Tr}(C_{p^n})_i| = \frac{1}{n}\binom{n}{i}\binom{n}{i-1}.
 \]
 By summing these up as $i$ ranges from 0 to the complexity of $G$, one can then obtain a count for the entire collection of transfer systems.

\subsection{Complexity realizers}\label{sub:realizers}

This paper has been concerned mainly with computing the complexity of certain groups $G$. We have seen computationally, somewhat surprisingly, in some cases such as in $C_{p_1 \cdots p_n}$ or $S_5$ that there is a unique transfer system which realizes this complexity. Possibly more strange is the fact that for groups such as $S_4$ there are multiple transfer systems that realize the complexity. \cref{tab:complex} records the number of transfer systems realizing the complexity of $G = C_{p^iq^j}$. Already from this simple collection we see some wild behavior such as the off diagonal being all ones.

We believe that there may be a genuine property of the group $G$ which may underline when the complexity of $G$ is realized by a unique transfer system, building on from the ideas presented in \cref{sec:filtration}.

\begin{table}[h]
\begin{tabular}{|c|c|c|c|c|c|c|c|c|}
\hline
 & 0 & 1 & 2 & 3 & 4 & 5 & 6 & 7 \\ \hline
0 & \cellcolor{rainbow-blue!50} 1 & 1 & 1 & 1 & 1 & 1 & 1 & 1 \\ \hline
1 &  & \cellcolor{rainbow-blue!50}4 & 1 & 14 & 4 & 98 & 25 & 882 \\ \hline
2 &  &  & \cellcolor{rainbow-blue!50}1 & 1 & 1 & 4 & \cellcolor{rainbow-red!50} & \cellcolor{rainbow-red!50} \\ \hline
3 &  &  &  & \cellcolor{rainbow-blue!50}3 & 1 & \cellcolor{rainbow-red!50} & \cellcolor{rainbow-red!50} & \cellcolor{rainbow-red!50} \\ \hline
\end{tabular}
\caption{A table recording the number of transfer systems whose minimal generating basis realizes the complexity of the group for $G=C_{p^iq^j}$. The blue squares highlight the diagonal, while the red squares indicate cases where we have not been able to complete the computation due to computational constraints.}\label{tab:complex}
\end{table}

\subsection{Generators and duality, model structures, and compatibility}

The theory and properties of transfer systems has been widely researched and extended beyond just enumerating transfer systems:
\begin{itemize}
    \item Work of Franchere et al. \cite{fooqw}, building on work of Balchin et al. \cite{bbpr} introduces a duality  on the collection of transfer systems for an abelian group.
    \item Work of Franchere et al. \cite{fooqw} identified that transfer systems for abelian groups are equivalent to weak factorization systems on the lattice of subgroups. This was followed by work of Balchin et al. \cite{BOOR, BMO_comp} which considered those pairs of weak factorization systems which gave rise to (pre)model structures.
    \item Blumberg--Hill introduced the concept of bi-incomplete transfer systems in \cite{BH_bitambara}, which was abstracted to the notion of compatible pairs of transfer systems by Chan in \cite{Chan_tambara}. Compatible pairs for certain groups were considered by Hill et al. \cite{hill2022countingcompatibleindexingsystems} and Mazur et al. \cite{mazur2024uniquelycompatibletransfersystems}.
\end{itemize}

This of course leads to a discussion of how the idea of minimal generating sets interacts with these various concepts. For example, can one ask if it is possible to check if a pair of transfer systems is compatible by just checking it on given generating sets.

\addtocontents{toc}{\vspace{1\baselineskip}}

\appendix
\newpage 

\section{Algorithms}\label{appendix:a}

In this appendix we present pseudocode of a naive algorithm which produces all transfer systems by using \cref{cons:rubin} interpreted as a closure operator. As input for the final algorithm, we first need to implement Rubin's algorithm. This is presented in \cref{alg:three} below:

\begin{algorithm}[h]
\caption{The \texttt{transferClosure} function which implements Rubin's closure operator for a given set $S$ and group $G$. It is assumed that the $S_i$ are stored as types which do not admit duplicates (such as a \texttt{C++} set).
In practice this algorithm can be vastly improved via some dynamic programming. In particular for each pair $(K,H)$ one computes, and stores, the possible conjugates $(gKg^{-1},gKg^{-1})$ and intersections $(L \cap K, L)$ in an array, and for each pair $(K_1, H_1)$ and $(K_2, H_2)$ one stores the pair $(K_1, H_2)$ if it exists in a two-dimensional array. With this data preprocessed it is then possible to replace many of the computations and loops in this algorithm with simple lookups vastly improving the performance. Another benefit of this preprocessing is that the algorithm no longer needs to know about the group $G$, and all of the group theoretic operations are outsourced to a system such as \texttt{Sage}.}\label{alg:three}
\KwData{A collection $S = \{(H_i,K_i)\}_{i \in I}$ of subgroups $H < K \leqslant G$ and the finite group $G$.}
\KwResult{The transfer system $S_3 = \langle S \rangle$.}
~\\
\Begin(\texttt{transferClosure}${(S,\, G)}$:){
$S_0 \gets S $\;
\For{$(K,H) \in S_0$}{
    \For{$g \in G$}{
        $S_1\textbf{.insert} (gKg^{-1}, gHg^{-1})$\;
    }
}
~\\
$S_2 \gets S_1$\;
\For{$(K,H) \in S_1$}{
    \For{$L \leqslant H$}{
        $S_2\textbf{.insert}(L \cap K, L)$\;
    }
}
~\\
diff $\gets$ 1\;
$S_3 \gets S_2$\;
\While{$\mathrm{diff} \,{{!}{=}}\,\, 0$}{
    old\_size = $S_3\textbf{.size()}$\;
    \For{$(K_1,H_1) \in S_3$}{
            \For{$(K_2,H_2) \in S_3$}{
                \If{$H_1 == K_2$}{
                    $S_3\textbf{.insert}(K_1,H_2)$\;
                }
            }
    }
    diff = $S_3\textbf{.size()}-$old\_size\;
}
~\\
\Return{$S_3$\;}
}
~\\
\end{algorithm}

\newpage

We can now use \cref{alg:three} alongside one of the known algorithms for generating all closed sets of a closure operator. We present this in \cref{alg:four}. We note that this is only a naive implementation of this algorithm and there are many specific optimizations that one can use.

\begin{algorithm}\label{alg:rubin2}
    \caption{The \texttt{transferFind} function which computes, and stores, all transfer systems for a given finite group $G$. The \texttt{transferClosure} function is the algorithm of \cref{alg:three}. It is assumed that $\mathsf{Tr}(G)$, new\_transfer\_systems, and temp\_new\_transfer\_systems are stored as a type which do not admit duplicates (such as a \texttt{C++} set). We highlight that this algorithm is embarrassingly parallelizable by partitioning the new\_transfer\_systems array.}\label{alg:four}
\KwData{A finite group $G$.}
\KwResult{The collection of all transfer systems $\mathsf{Tr}(G)$ of $G$.}
~\\
\Begin(\texttt{transferFind}${(G)}$){
$\mathsf{Tr}(G), \text{new\_transfer\_systems} \gets \{\varnothing\}$\;
diff $\gets$ true\;
\While{$\mathrm{diff} \,{{!}{=}}\,\, 0$}{
    old\_size = $\mathsf{Tr}(G)\textbf{.size()}$\;
     temp\_new\_transfer\_systems\textbf{.clear()}\;
    \For{$\mathcal{R} \in \mathrm{new\_transfer\_systems}$}{
        \For{$(K,H) \in \Sub(G)$}{
            test\_transfer\_system $\gets \texttt{transferClosure}(\mathcal{R} \cup (K,H), G)$\;
            \If{$\mathrm{test\_transfer\_system} \not\in \mathsf{Tr}(G)$}{
            temp\_new\_transfer\_systems\textbf{.insert}(test\_transfer\_system)\;
            $\mathsf{Tr}(G)$\textbf{.insert}(test\_transfer\_system)\;
            }
        }
    }
    diff = $\mathsf{Tr}(G)\textbf{.size()}-$old\_size\;
    new\_transfer\_systems = temp\_new\_transfer\_systems\;
}
~\\
\Return{$\mathsf{Tr}(G)$\;}
}
~\\
\end{algorithm}

\newpage

\begin{example}
    For clarity, let us run \cref{alg:four} by hand for the case of $G = C_{pq}$ for $p \neq q$. While this algorithm is clearly over-kill for this example, hopefully the reader can appreciate how the algorithm would be effective in larger examples.
    
    The algorithm starts by inserting the only transfer system that has basis size zero, that is, the empty transfer system:
\[
\begin{tikzpicture}[scale=0.5, framed]
\node[fill=dark-green!50,circle,draw,inner sep = 0pt, outer sep = 0pt, minimum size=1.5mm] (1) at (0,0) {};
\node[fill=dark-green!50,circle,draw,inner sep = 0pt, outer sep = 0pt, minimum size=1.5mm] (p) at (2,0) {};
\node[fill=dark-green!50,circle,draw,inner sep = 0pt, outer sep = 0pt, minimum size=1.5mm] (q) at (0,2) {};
\node[fill=dark-green!50,circle,draw,inner sep = 0pt, outer sep = 0pt, minimum size=1.5mm] (qp) at (2,2) {};
\draw[->, black!10] (1) edge (p);
\draw[->, black!10] (1) edge (qp);
\draw[->, black!10] (1) edge (q);
\draw[->, black!10] (p) edge (qp);
\draw[->, black!10] (q) edge (qp);
\end{tikzpicture}
  \]      
We then take this transfer system and add all possible edges one at a time and apply \cref{alg:three} to it. In the following pictures, the solid blue edge is the one that we have chosen to add, and the dashed red edges are ones that are then added by \cref{alg:four}.
\begin{table}[h]
\begin{tabular}{ccccc}
\begin{tikzpicture}[scale=0.5, framed]
\node[fill=dark-green!50,circle,draw,inner sep = 0pt, outer sep = 0pt, minimum size=1.5mm] (1) at (0,0) {};
\node[fill=dark-green!50,circle,draw,inner sep = 0pt, outer sep = 0pt, minimum size=1.5mm] (p) at (2,0) {};
\node[fill=dark-green!50,circle,draw,inner sep = 0pt, outer sep = 0pt, minimum size=1.5mm] (q) at (0,2) {};
\node[fill=dark-green!50,circle,draw,inner sep = 0pt, outer sep = 0pt, minimum size=1.5mm] (qp) at (2,2) {};
\draw[->, rainbow-blue!100] (1) edge (p);
\draw[->, black!10] (1) edge (qp);
\draw[->, black!10] (1) edge (q);
\draw[->, black!10] (p) edge (qp);
\draw[->, black!10] (q) edge (qp);
\end{tikzpicture} &
\begin{tikzpicture}[scale=0.5, framed]
\node[fill=dark-green!50,circle,draw,inner sep = 0pt, outer sep = 0pt, minimum size=1.5mm] (1) at (0,0) {};
\node[fill=dark-green!50,circle,draw,inner sep = 0pt, outer sep = 0pt, minimum size=1.5mm] (p) at (2,0) {};
\node[fill=dark-green!50,circle,draw,inner sep = 0pt, outer sep = 0pt, minimum size=1.5mm] (q) at (0,2) {};
\node[fill=dark-green!50,circle,draw,inner sep = 0pt, outer sep = 0pt, minimum size=1.5mm] (qp) at (2,2) {};
\draw[->, rainbow-red!100, dashed] (1) edge (p);
\draw[->, rainbow-blue!100] (1) edge (qp);
\draw[->, rainbow-red!100, dashed] (1) edge (q);
\draw[->, black!10] (p) edge (qp);
\draw[->, black!10] (q) edge (qp);
\end{tikzpicture}
&
\begin{tikzpicture}[scale=0.5, framed]
\node[fill=dark-green!50,circle,draw,inner sep = 0pt, outer sep = 0pt, minimum size=1.5mm] (1) at (0,0) {};
\node[fill=dark-green!50,circle,draw,inner sep = 0pt, outer sep = 0pt, minimum size=1.5mm] (p) at (2,0) {};
\node[fill=dark-green!50,circle,draw,inner sep = 0pt, outer sep = 0pt, minimum size=1.5mm] (q) at (0,2) {};
\node[fill=dark-green!50,circle,draw,inner sep = 0pt, outer sep = 0pt, minimum size=1.5mm] (qp) at (2,2) {};
\draw[->, black!10] (1) edge (p);
\draw[->, black!10] (1) edge (qp);
\draw[->, rainbow-blue!100] (1) edge (q);
\draw[->, black!10] (p) edge (qp);
\draw[->, black!10] (q) edge (qp);
\end{tikzpicture}
&
\begin{tikzpicture}[scale=0.5, framed]
\node[fill=dark-green!50,circle,draw,inner sep = 0pt, outer sep = 0pt, minimum size=1.5mm] (1) at (0,0) {};
\node[fill=dark-green!50,circle,draw,inner sep = 0pt, outer sep = 0pt, minimum size=1.5mm] (p) at (2,0) {};
\node[fill=dark-green!50,circle,draw,inner sep = 0pt, outer sep = 0pt, minimum size=1.5mm] (q) at (0,2) {};
\node[fill=dark-green!50,circle,draw,inner sep = 0pt, outer sep = 0pt, minimum size=1.5mm] (qp) at (2,2) {};
\draw[->, black!10] (1) edge (p);
\draw[->, black!10] (1) edge (qp);
\draw[->, rainbow-red!100, dashed] (1) edge (q);
\draw[->, rainbow-blue!100] (p) edge (qp);
\draw[->, black!10] (q) edge (qp);
\end{tikzpicture}
& 
\begin{tikzpicture}[scale=0.5, framed]
\node[fill=dark-green!50,circle,draw,inner sep = 0pt, outer sep = 0pt, minimum size=1.5mm] (1) at (0,0) {};
\node[fill=dark-green!50,circle,draw,inner sep = 0pt, outer sep = 0pt, minimum size=1.5mm] (p) at (2,0) {};
\node[fill=dark-green!50,circle,draw,inner sep = 0pt, outer sep = 0pt, minimum size=1.5mm] (q) at (0,2) {};
\node[fill=dark-green!50,circle,draw,inner sep = 0pt, outer sep = 0pt, minimum size=1.5mm] (qp) at (2,2) {};
\draw[->, rainbow-red!100,dashed] (1) edge (p);
\draw[->, black!10] (1) edge (qp);
\draw[->, black!10] (1) edge (q);
\draw[->, black!10] (p) edge (qp);
\draw[->, rainbow-blue!100] (q) edge (qp);
\end{tikzpicture}
\end{tabular}
\end{table}

We then add all of these transfer systems to our collection $\mathsf{Tr}(G)$. As all of these are unique, and we have not yet seen them, we have now six transfer systems in $\mathsf{Tr}(G)$. We now take each one of these new transfer systems, and repeat what we did for the empty transfer system. Each row of the below corresponds to each of the five new transfer systems we obtained, using the same coloring scheme as before:
\begin{table}[h]
\begin{tabular}{ccccc}
 & \begin{tikzpicture}[scale=0.5, framed]
\node[fill=dark-green!50,circle,draw,inner sep = 0pt, outer sep = 0pt, minimum size=1.5mm] (1) at (0,0) {};
\node[fill=dark-green!50,circle,draw,inner sep = 0pt, outer sep = 0pt, minimum size=1.5mm] (p) at (2,0) {};
\node[fill=dark-green!50,circle,draw,inner sep = 0pt, outer sep = 0pt, minimum size=1.5mm] (q) at (0,2) {};
\node[fill=dark-green!50,circle,draw,inner sep = 0pt, outer sep = 0pt, minimum size=1.5mm] (qp) at (2,2) {};
\draw[->, black!100] (1) edge (p);
\draw[->, rainbow-blue!100] (1) edge (qp);
\draw[->, rainbow-red!100,dashed] (1) edge (q);
\draw[->, black!10] (p) edge (qp);
\draw[->, black!10] (q) edge (qp);
\end{tikzpicture}
&
\begin{tikzpicture}[scale=0.5, framed]
\node[fill=dark-green!50,circle,draw,inner sep = 0pt, outer sep = 0pt, minimum size=1.5mm] (1) at (0,0) {};
\node[fill=dark-green!50,circle,draw,inner sep = 0pt, outer sep = 0pt, minimum size=1.5mm] (p) at (2,0) {};
\node[fill=dark-green!50,circle,draw,inner sep = 0pt, outer sep = 0pt, minimum size=1.5mm] (q) at (0,2) {};
\node[fill=dark-green!50,circle,draw,inner sep = 0pt, outer sep = 0pt, minimum size=1.5mm] (qp) at (2,2) {};
\draw[->, black!100] (1) edge (p);
\draw[->, black!10] (1) edge (qp);
\draw[->, rainbow-blue!!10] (1) edge (q);
\draw[->, black!10] (p) edge (qp);
\draw[->, black!10] (q) edge (qp);
\end{tikzpicture}
&
\begin{tikzpicture}[scale=0.5, framed]
\node[fill=dark-green!50,circle,draw,inner sep = 0pt, outer sep = 0pt, minimum size=1.5mm] (1) at (0,0) {};
\node[fill=dark-green!50,circle,draw,inner sep = 0pt, outer sep = 0pt, minimum size=1.5mm] (p) at (2,0) {};
\node[fill=dark-green!50,circle,draw,inner sep = 0pt, outer sep = 0pt, minimum size=1.5mm] (q) at (0,2) {};
\node[fill=dark-green!50,circle,draw,inner sep = 0pt, outer sep = 0pt, minimum size=1.5mm] (qp) at (2,2) {};
\draw[->, black!100] (1) edge (p);
\draw[->, rainbow-red!100, dashed] (1) edge (qp);
\draw[->, rainbow-red!100, dashed] (1) edge (q);
\draw[->, rainbow-blue!100] (p) edge (qp);
\draw[->, black!10] (q) edge (qp);
\end{tikzpicture}
&
\begin{tikzpicture}[scale=0.5, framed]
\node[fill=dark-green!50,circle,draw,inner sep = 0pt, outer sep = 0pt, minimum size=1.5mm] (1) at (0,0) {};
\node[fill=dark-green!50,circle,draw,inner sep = 0pt, outer sep = 0pt, minimum size=1.5mm] (p) at (2,0) {};
\node[fill=dark-green!50,circle,draw,inner sep = 0pt, outer sep = 0pt, minimum size=1.5mm] (q) at (0,2) {};
\node[fill=dark-green!50,circle,draw,inner sep = 0pt, outer sep = 0pt, minimum size=1.5mm] (qp) at (2,2) {};
\draw[->, black!100] (1) edge (p);
\draw[->, black!10] (1) edge (qp);
\draw[->, black!10] (1) edge (q);
\draw[->, black!10] (p) edge (qp);
\draw[->, rainbow-blue!100] (q) edge (qp);
\end{tikzpicture}  
\\ 
 &  &  & 
 \begin{tikzpicture}[scale=0.5, framed]
\node[fill=dark-green!50,circle,draw,inner sep = 0pt, outer sep = 0pt, minimum size=1.5mm] (1) at (0,0) {};
\node[fill=dark-green!50,circle,draw,inner sep = 0pt, outer sep = 0pt, minimum size=1.5mm] (p) at (2,0) {};
\node[fill=dark-green!50,circle,draw,inner sep = 0pt, outer sep = 0pt, minimum size=1.5mm] (q) at (0,2) {};
\node[fill=dark-green!50,circle,draw,inner sep = 0pt, outer sep = 0pt, minimum size=1.5mm] (qp) at (2,2) {};
\draw[->, black!100] (1) edge (p);
\draw[->, black!100] (1) edge (qp);
\draw[->, black!100] (1) edge (q);
\draw[->, rainbow-blue!100] (p) edge (qp);
\draw[->, black!10] (q) edge (qp);
\end{tikzpicture}
&
\begin{tikzpicture}[scale=0.5, framed]
\node[fill=dark-green!50,circle,draw,inner sep = 0pt, outer sep = 0pt, minimum size=1.5mm] (1) at (0,0) {};
\node[fill=dark-green!50,circle,draw,inner sep = 0pt, outer sep = 0pt, minimum size=1.5mm] (p) at (2,0) {};
\node[fill=dark-green!50,circle,draw,inner sep = 0pt, outer sep = 0pt, minimum size=1.5mm] (q) at (0,2) {};
\node[fill=dark-green!50,circle,draw,inner sep = 0pt, outer sep = 0pt, minimum size=1.5mm] (qp) at (2,2) {};
\draw[->, black!100] (1) edge (p);
\draw[->, black!100] (1) edge (qp);
\draw[->, black!100] (1) edge (q);
\draw[->, black!10] (p) edge (qp);
\draw[->, rainbow-blue!100] (q) edge (qp);
\end{tikzpicture}
\\ 
 \begin{tikzpicture}[scale=0.5, framed]
\node[fill=dark-green!50,circle,draw,inner sep = 0pt, outer sep = 0pt, minimum size=1.5mm] (1) at (0,0) {};
\node[fill=dark-green!50,circle,draw,inner sep = 0pt, outer sep = 0pt, minimum size=1.5mm] (p) at (2,0) {};
\node[fill=dark-green!50,circle,draw,inner sep = 0pt, outer sep = 0pt, minimum size=1.5mm] (q) at (0,2) {};
\node[fill=dark-green!50,circle,draw,inner sep = 0pt, outer sep = 0pt, minimum size=1.5mm] (qp) at (2,2) {};
\draw[->, rainbow-blue!100] (1) edge (p);
\draw[->, black!10] (1) edge (qp);
\draw[->, black!100] (1) edge (q);
\draw[->, black!10] (p) edge (qp);
\draw[->, black!10] (q) edge (qp);
\end{tikzpicture}
&
\begin{tikzpicture}[scale=0.5, framed]
\node[fill=dark-green!50,circle,draw,inner sep = 0pt, outer sep = 0pt, minimum size=1.5mm] (1) at (0,0) {};
\node[fill=dark-green!50,circle,draw,inner sep = 0pt, outer sep = 0pt, minimum size=1.5mm] (p) at (2,0) {};
\node[fill=dark-green!50,circle,draw,inner sep = 0pt, outer sep = 0pt, minimum size=1.5mm] (q) at (0,2) {};
\node[fill=dark-green!50,circle,draw,inner sep = 0pt, outer sep = 0pt, minimum size=1.5mm] (qp) at (2,2) {};
\draw[->, rainbow-red!100, dashed] (1) edge (p);
\draw[->, rainbow-blue!100] (1) edge (qp);
\draw[->, black!100] (1) edge (q);
\draw[->, black!10] (p) edge (qp);
\draw[->, black!10] (q) edge (qp);
\end{tikzpicture}  &  & 
\begin{tikzpicture}[scale=0.5, framed]
\node[fill=dark-green!50,circle,draw,inner sep = 0pt, outer sep = 0pt, minimum size=1.5mm] (1) at (0,0) {};
\node[fill=dark-green!50,circle,draw,inner sep = 0pt, outer sep = 0pt, minimum size=1.5mm] (p) at (2,0) {};
\node[fill=dark-green!50,circle,draw,inner sep = 0pt, outer sep = 0pt, minimum size=1.5mm] (q) at (0,2) {};
\node[fill=dark-green!50,circle,draw,inner sep = 0pt, outer sep = 0pt, minimum size=1.5mm] (qp) at (2,2) {};
\draw[->, black!10] (1) edge (p);
\draw[->, black!10] (1) edge (qp);
\draw[->, black!100] (1) edge (q);
\draw[->, rainbow-blue!100] (p) edge (qp);
\draw[->, black!10] (q) edge (qp);
\end{tikzpicture}
&
\begin{tikzpicture}[scale=0.5, framed]
\node[fill=dark-green!50,circle,draw,inner sep = 0pt, outer sep = 0pt, minimum size=1.5mm] (1) at (0,0) {};
\node[fill=dark-green!50,circle,draw,inner sep = 0pt, outer sep = 0pt, minimum size=1.5mm] (p) at (2,0) {};
\node[fill=dark-green!50,circle,draw,inner sep = 0pt, outer sep = 0pt, minimum size=1.5mm] (q) at (0,2) {};
\node[fill=dark-green!50,circle,draw,inner sep = 0pt, outer sep = 0pt, minimum size=1.5mm] (qp) at (2,2) {};
\draw[->, rainbow-red!100, dashed] (1) edge (p);
\draw[->, rainbow-red!100, dashed] (1) edge (qp);
\draw[->, black!100] (1) edge (q);
\draw[->, black!10] (p) edge (qp);
\draw[->, rainbow-blue!100] (q) edge (qp);
\end{tikzpicture} \\ 
 \begin{tikzpicture}[scale=0.5, framed]
\node[fill=dark-green!50,circle,draw,inner sep = 0pt, outer sep = 0pt, minimum size=1.5mm] (1) at (0,0) {};
\node[fill=dark-green!50,circle,draw,inner sep = 0pt, outer sep = 0pt, minimum size=1.5mm] (p) at (2,0) {};
\node[fill=dark-green!50,circle,draw,inner sep = 0pt, outer sep = 0pt, minimum size=1.5mm] (q) at (0,2) {};
\node[fill=dark-green!50,circle,draw,inner sep = 0pt, outer sep = 0pt, minimum size=1.5mm] (qp) at (2,2) {};
\draw[->, rainbow-blue!100] (1) edge (p);
\draw[->, rainbow-red!100, dashed] (1) edge (qp);
\draw[->, black!100] (1) edge (q);
\draw[->, black!100] (p) edge (qp);
\draw[->, black!10] (q) edge (qp);
\end{tikzpicture}
&
\begin{tikzpicture}[scale=0.5, framed]
\node[fill=dark-green!50,circle,draw,inner sep = 0pt, outer sep = 0pt, minimum size=1.5mm] (1) at (0,0) {};
\node[fill=dark-green!50,circle,draw,inner sep = 0pt, outer sep = 0pt, minimum size=1.5mm] (p) at (2,0) {};
\node[fill=dark-green!50,circle,draw,inner sep = 0pt, outer sep = 0pt, minimum size=1.5mm] (q) at (0,2) {};
\node[fill=dark-green!50,circle,draw,inner sep = 0pt, outer sep = 0pt, minimum size=1.5mm] (qp) at (2,2) {};
\draw[->, rainbow-red!100, dashed] (1) edge (p);
\draw[->, rainbow-blue!100] (1) edge (qp);
\draw[->, black!100] (1) edge (q);
\draw[->, black!100] (p) edge (qp);
\draw[->, black!10] (q) edge (qp);
\end{tikzpicture}  &  &  & \begin{tikzpicture}[scale=0.5, framed]
\node[fill=dark-green!50,circle,draw,inner sep = 0pt, outer sep = 0pt, minimum size=1.5mm] (1) at (0,0) {};
\node[fill=dark-green!50,circle,draw,inner sep = 0pt, outer sep = 0pt, minimum size=1.5mm] (p) at (2,0) {};
\node[fill=dark-green!50,circle,draw,inner sep = 0pt, outer sep = 0pt, minimum size=1.5mm] (q) at (0,2) {};
\node[fill=dark-green!50,circle,draw,inner sep = 0pt, outer sep = 0pt, minimum size=1.5mm] (qp) at (2,2) {};
\draw[->, rainbow-red!100, dashed] (1) edge (p);
\draw[->, rainbow-red!100, dashed] (1) edge (qp);
\draw[->, black!100] (1) edge (q);
\draw[->, black!100] (p) edge (qp);
\draw[->, rainbow-blue!100] (q) edge (qp);
\end{tikzpicture} \\
 &  \begin{tikzpicture}[scale=0.5, framed]
\node[fill=dark-green!50,circle,draw,inner sep = 0pt, outer sep = 0pt, minimum size=1.5mm] (1) at (0,0) {};
\node[fill=dark-green!50,circle,draw,inner sep = 0pt, outer sep = 0pt, minimum size=1.5mm] (p) at (2,0) {};
\node[fill=dark-green!50,circle,draw,inner sep = 0pt, outer sep = 0pt, minimum size=1.5mm] (q) at (0,2) {};
\node[fill=dark-green!50,circle,draw,inner sep = 0pt, outer sep = 0pt, minimum size=1.5mm] (qp) at (2,2) {};
\draw[->, black!100] (1) edge (p);
\draw[->, rainbow-blue!100] (1) edge (qp);
\draw[->, rainbow-red!100, dashed] (1) edge (q);
\draw[->, black!10] (p) edge (qp);
\draw[->, black!100] (q) edge (qp);
\end{tikzpicture}
&
\begin{tikzpicture}[scale=0.5, framed]
\node[fill=dark-green!50,circle,draw,inner sep = 0pt, outer sep = 0pt, minimum size=1.5mm] (1) at (0,0) {};
\node[fill=dark-green!50,circle,draw,inner sep = 0pt, outer sep = 0pt, minimum size=1.5mm] (p) at (2,0) {};
\node[fill=dark-green!50,circle,draw,inner sep = 0pt, outer sep = 0pt, minimum size=1.5mm] (q) at (0,2) {};
\node[fill=dark-green!50,circle,draw,inner sep = 0pt, outer sep = 0pt, minimum size=1.5mm] (qp) at (2,2) {};
\draw[->, black!100] (1) edge (p);
\draw[->, rainbow-red!100, dashed] (1) edge (qp);
\draw[->, rainbow-blue!100] (1) edge (q);
\draw[->, black!10] (p) edge (qp);
\draw[->, black!100] (q) edge (qp);
\end{tikzpicture}
&
\begin{tikzpicture}[scale=0.5, framed]
\node[fill=dark-green!50,circle,draw,inner sep = 0pt, outer sep = 0pt, minimum size=1.5mm] (1) at (0,0) {};
\node[fill=dark-green!50,circle,draw,inner sep = 0pt, outer sep = 0pt, minimum size=1.5mm] (p) at (2,0) {};
\node[fill=dark-green!50,circle,draw,inner sep = 0pt, outer sep = 0pt, minimum size=1.5mm] (q) at (0,2) {};
\node[fill=dark-green!50,circle,draw,inner sep = 0pt, outer sep = 0pt, minimum size=1.5mm] (qp) at (2,2) {};
\draw[->, black!100] (1) edge (p);
\draw[->, rainbow-red!100, dashed] (1) edge (qp);
\draw[->, rainbow-red!100, dashed] (1) edge (q);
\draw[->, rainbow-blue!100] (p) edge (qp);
\draw[->, black!100] (q) edge (qp);
\end{tikzpicture}  & 
\end{tabular}
\end{table}

We add all of these transfer systems into $\mathsf{Tr}(G)$ and remove any redundant copies. At this point we have $|\mathsf{Tr}(G)|=10$, and we have added four new transfer systems. At this point \emph{we} know that we have found all transfer systems because this is a known computation, however the algorithm would not yet terminate. The algorithm would now take these four new transfer systems and repeat the process of adding all possible edges one at a time. It is clear that doing so in each case would result in the complete transfer system which is already in our collection. At this point, no new transfer systems were added in the loop, and the algorithm would terminate.
\end{example}

\addtocontents{toc}{\vspace{1\baselineskip}}

 \bibliography{bib}\bibliographystyle{alpha}

\end{document}